\newtheorem{th.}{Theorem}[section]
\newtheorem{lemme}[th.]{Lemma}
\newtheorem{fact}{Fact}
\newtheorem*{lemme*}{Lemme} 
\newtheorem{prop.}[th.]{Proposition}
\newtheorem{def.}{Definition}[section]
\newtheorem{cor.}[th.]{Corollary}
\newtheorem*{cor*}{Corollary}
\newtheorem*{rem.}{Remarque}
\newtheorem*{ex.}{Exemple}
\newtheorem*{th.*}{Theorem}
\newtheorem*{th-traj-geod}{\Cref{th-traj-geod} bis}
\newtheorem*{prop-trans}{\Cref{th-trans}}
\newtheorem*{Th-rec/trans}{\Cref{th-rec/trans}}
\newtheorem*{distform2bis}{\Cref{distform2} bis}
\newtheorem{theorem}{Theorem}
\newcommand{\D}{\mathbb{D}}
\newcommand{\Z}{\mathbb{Z}}
\newcommand{\C}{\mathcal{C}}
\newcommand{\R}{\mathbb{R}}
\newcommand{\N}{\mathbb{N}}
\newcommand{\PP}{\mathscr{P}}
\newcommand{\FF}{\mathscr{F}}
\newcommand{\ka}{\mathfrak{a}}
\newcommand{\leb}{\text{leb}}
\newcounter {subsubsubsection}[subsubsection]
\renewcommand\thesubsubsubsection{\thesubsubsection .\@alph\c@subsubsubsection}
\newcommand\subsubsubsection{\@startsection{subsubsubsection}{4}{\z@}%
                                     {-3.25ex\@plus -1ex \@minus -.2ex}%
                                     {1.5ex \@plus .2ex}%
                                     {\normalfont\large\bfseries}}
\newcommand*\l@subsubsubsection{\@dottedtocline{3}{10.0em}{4.1em}}
\newcommand*{\subsubsubsectionmark}[1]{}
\title{Some asymptotic properties of random walks on homogeneous spaces}
\author{Timothée Bénard}
\date{}
\begin{document}

\maketitle

\bigskip

\abstract{Let $G$ be a connected semisimple real Lie group with finite center, and $\mu$ a  probability measure on $G$ whose support generates a Zariski-dense subgroup of $G$. We consider the right $\mu$-random walk on $G$ and show that each random trajectory spends most of its time  at bounded distance of  a well-chosen Weyl chamber. We infer that if $G$ has rank one, and $\mu$ has a finite first moment, then for any discrete subgroup $\Lambda\subseteq G$, the $\mu$-walk and the geodesic flow on $\Lambda \backslash G$ are either both transient, or both recurrent and ergodic, thus extending a well known theorem due to  Hopf-Tsuji-Sullivan-Kaimanovich dealing with the Brownian motion.}

\bigskip

\bigskip

\large
\tableofcontents

\bigskip

\bigskip

\bigskip

{\small{\bf  Mathematics Subject Classification }:  22E40, 37B20, 37H15.} 

{\small {\bf Key words}  :  Lie groups, Homogeneous spaces, Random walks, Geodesic flow, Recurrence.} 

\newpage
\large
\section{Introduction}
\baselineskip=15pt

\bigskip

This paper studies the asymptotic properties of random walks on semisimple Lie groups or their quotients. This topic has been developed for 60 years. The heart of the subject  is the theory of random walks on linear groups  \cite{BouLac, BQRW} worked out by Furstenberg \cite{Fur63}, Kesten \cite{Kes}, Guivarc'h \cite{GuiRau86}, and many others, to transpose classical limit theorems for Markov chains on $\Z^d$ to the context of linear random walks.  It recently led to spectacular applications to  walks on finite volume homogeneous spaces, such as  Eskin-Margulis Theorem \cite{EskMar}  establishing the non escape of mass for any starting point, or later  Benoist-Quint's classification  of stationary probability measures \cite{BQI, BQII, BQIII} (see also \cite{EskLin}). Our paper adds to this network of ideas by enriching the general theory of walks on linear groups and infering a recurrence criterion for walks in infinite volume.

\bigskip
A seminal result due to Furstenberg \cite{Fur63} is that a trajectory of a Zariski-dense random walk on a semisimple Lie group goes to infinity in a specific direction, given by a point on the flag variety.  In the very concrete setting of a walk on $PSL_{2}(\R)$, seen as the unitary bundle of the Poincaré disk $\mathbb{D}$, this means that every walk trajectory converges toward a limit point on the boundary $\partial\D$. Furstenberg's result has since been transposed to the context of Gromov hyperbolic spaces. More precisely, Ancona  \cite{Anc} considers the Brownian motion on a Gromov hyperbolic manifold and shows that the distance between a Brownian trajectory and its limit geodesic ray grows at most logarithmically. Analogous results for walks on  hyperbolic groups are proven in  \cite{Anc88,Led, BHM,Sis17} . 

The first theorem of our paper completes this  panel of results by claiming that the distance between a random trajectory and its asymptotic geodesic ray (or more generally asymptotic Weyl chamber) is most of the time bounded. We state and prove our result in the context of walks on Lie groups, eventhough the method  could be adapted to deal with hyperbolic groups.

 Let $G$ be a connected semisimple real Lie group with finite center,  $\mu$ a probability measure on $G$.  The (right) $\mu$-random walk on $G$ is defined by the transition probabilities 
$$p(g,h)=\mu(g^{-1}h) $$
A  trajectory starting from a point $x_{0}\in G$ is thus obtained as a sequence $(x_{0}b_{1}\dots b_{n})_{n\geq 1}$ where the $b_{i}\in G$ are independent $\mu$-distributed increments.  We make the assumption that the subgroup $\Gamma_{\mu}=\langle \text{supp } \mu \rangle \subseteq G$ generated by the support of $\mu$ is \emph{Zariski-dense} in $G$. Denoting by $\mathfrak{g}$ the Lie algebra of $G$, this means that every polynomial function on $\text{End}(\mathfrak{g})$ which vanishes on the adjoint representation $\text{Ad}\Gamma_{\mu}$ is also null on $\text{Ad}G$.

\smallskip

\newpage

 \begin{theorem}[Bounded deviations]\label{th-traj-geod}
 Let $G$ be a connected semisimple  real Lie group with finite center, and $\mu$ a probability measure on $G$ such that  $\Gamma_{\mu}$  is Zariski-dense in $G$. Set $B=G^{\N^\star}$, $\beta=\mu^{\otimes \N^\star}$, fix  a maximal compact subgroup $K\subseteq G$, and a left invariant metric $d$ on $G$. 
 
 To every $b\in B$, one can associate a Weyl Chamber $\C(b)\subseteq G/K$ such that the following holds. For all $\varepsilon>0$, there exists a constant $R>0$, such that for $\beta$-almost every $b\in B$,
 $$\liminf_{n\to +\infty} \frac{1}{n}\sharp \{i\in \llbracket 1, n\rrbracket,\,\, d(b_{1}\dots b_{i}, \C(b))\leq R \} >1-\varepsilon $$

  \end{theorem}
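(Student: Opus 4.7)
The plan is to associate a limit Weyl chamber to each trajectory using the limit flag of the walk, and then control the deviation by combining the law of large numbers for the Cartan projection with a contraction estimate on the flag variety.

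First, I would invoke Furstenberg's theorem for Zariski-dense measures: for $\beta$-a.e.\ $b$, the flags $b_1 \cdots b_n P \in \mathcal{F} = G/P$ converge to a limit $\xi(b) \in \mathcal{F}$. Choosing a measurable lift $k(b) \in K$ via the fibration $K \to K/M \cong \mathcal{F}$, define
\[
 \C(b) = k(b) \exp\!\left(\overline{\mathfrak{a}^+}\right) K / K \subseteq G/K,
\]
the Weyl chamber at the origin of the symmetric space pointing to $\xi(b)$ in the geodesic compactification.

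Second, the main dynamical input is a quantitative tracking. Write the Cartan decomposition $g_i := b_1\cdots b_i = k_i \exp(X_i)\ell_i$ with $X_i \in \overline{\mathfrak{a}^+}$, and let $\xi_i \in \mathcal{F}$ be the limit flag of the shifted trajectory $(b_{i+j})_{j\geq 1}$, so that $\xi(b) = g_i \cdot \xi_i$. The standard proximality estimate then yields
\[
 d_\mathcal{F}\bigl(k_i P,\, \xi(b)\bigr) \leq C\, \frac{e^{-\alpha_{\min}(X_i)}}{d_\mathcal{F}(\xi_i,\, \ell_i^{-1} w_0 P)},
\]
where $\alpha_{\min}$ is the smallest simple root and $w_0$ is the longest element of the Weyl group. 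By the Markov property, $\xi_i$ is independent of $g_i$ and distributed as the Furstenberg stationary measure $\nu$; the H\"older regularity of $\nu$ (Benoist--Quint) keeps the denominator uniformly bounded below with prescribed probability. Combined with Guivarc'h--Raugi's law of large numbers $X_i/i \to \lambda \in \mathrm{int}(\mathfrak{a}^+)$, this gives, for a density-$(1-\varepsilon)$ set of $i$, both an $\eta$-regular Cartan projection of norm $\sim \|\lambda\| i$ and a flag distance $d_\mathcal{F}(k_i P, \xi(b))$ that is exponentially small in $i$.

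Third, I would close with a geometric comparison lemma in $G/K$: if $\kappa(g)$ lies at distance at least $\eta\|\kappa(g)\|$ from $\partial \mathfrak{a}^+$ and $d_\mathcal{F}(k(g)P, \xi) \leq e^{-c\|\kappa(g)\|}$ for appropriate constants $\eta, c$, then $d(gK, \C_\xi)$ is bounded by a constant $R_0$ depending only on these parameters. Applied to $g = g_i$ and $\xi = \xi(b)$, and combined with Fatou's lemma (using uniform-in-$i$ tightness of $\mathbb{P}(d(g_iK,\C(b)) > R)$), this produces a single $R$ for which the bad set $\{i : d(g_iK, \C(b)) > R\}$ has upper density at most $\varepsilon$ for $\beta$-a.e.\ $b$.

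The main obstacle I expect is the geometric comparison in higher rank, where $\C(b)$ is a flat half-sector of dimension equal to the real rank and the rank-$1$ hyperbolic computation must be replaced by a finer analysis using, for instance, the Cartan decomposition of $k(b)^{-1}g$ to reduce to the standard chamber $\exp(\overline{\mathfrak{a}^+})K$. A second difficulty is the uniformity of $R$ in $b$: this requires the proximality and regularity estimates to degrade only in a controlled way near the walls of $\mathfrak{a}^+$, and to match the constant $c$ appearing in the geometric lemma with the spectral gap $\alpha_{\min}(\lambda)/\|\lambda\|$, so that the density of indices where all required bounds hold simultaneously remains close to $1$.
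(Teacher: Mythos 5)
Your proposal is built on two ingredients that are simply unavailable under the hypotheses of the theorem, and this is not incidental: the paper's stated point of departure from the existing literature is precisely that \emph{no moment assumption is made on $\mu$}. The Guivarc'h--Raugi law of large numbers $X_i/i \to \lambda$ requires a finite first moment, and the H\"older regularity of the stationary measure $\nu$ (used to bound $d_{\mathcal{F}}(\xi_i, \ell_i^{-1}w_0 P)$ from below with high probability) is a theorem requiring a finite \emph{exponential} moment. Without any moment hypothesis the Cartan projection $t_n(b)$ may grow subexponentially and $\nu$ may fail to be H\"older, so the exponential decay $e^{-\alpha_{\min}(X_i)}$ cannot beat the denominator and the whole quantitative contraction estimate collapses. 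Your argument is in effect the standard deviation argument used by Ledrappier, Sisto and others for \emph{logarithmic} deviation bounds under exponential moments, and it cannot be pushed to the momentless setting.

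There is a second, independent gap at the end. Even granting all the estimates, you deduce only that for each fixed $i$ (large) one has $\mathbb{P}\bigl(d(g_i K, \C(b)) > R\bigr) \leq \varepsilon$, and try to convert this via ``Fatou'' into an almost-sure lower density bound. But Fatou's lemma (in either direction, for the $\liminf$ or the reverse version for the $\limsup$ of the bounded density averages) yields only an inequality for the \emph{expectation} of the limit density, not an almost-sure inequality. A per-$i$ probability bound, uniform in $i$, does not propagate to a pointwise density statement without some ergodicity or Borel--Cantelli input.

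The paper sidesteps both problems with a different mechanism. Rather than tracking the flag distance quantitatively, it encodes the event ``$d(b_1\cdots b_i, \C(b)) \leq R$'' via the cocycle identity $d(b_1\cdots b_i, F(\xi^-,\xi_b)) = d\bigl(e, F(b_i^{-1}\cdots b_1^{-1}\xi^-, \xi_{T^i b})\bigr)$, which exhibits the indicator of that event as $f\circ (T^+)^i(b,\xi^-)$ where $T^+:(b,\xi^-)\mapsto (Tb, b_1^{-1}\xi^-)$ is the skew product on $B\times \mathcal{P}^-$ preserving $\beta\otimes\nu_{\mathcal{P}^-}$. That system is ergodic under Zariski-density alone, so the Birkhoff theorem directly gives the almost-sure convergence of the density averages to $\beta\otimes\nu_{\mathcal{P}^-}(f)$, and one simply chooses $R$ large enough to make that integral exceed $1-\varepsilon$. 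The only qualitative inputs are the existence and proximality of the stationary measure (no moments), and a purely geometric distance formula (Proposition \ref{distform0}) bounding $d(x, k_\xi a_{t_x})$ in terms of the distance of $x$ to the flat $F(\xi^-,\xi)$. If you want to repair your argument you should replace the law-of-large-numbers plus H\"older-regularity step with this ergodic-theorem argument on the skew product.
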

 \bigskip
\smallskip
Here  $d(b_{1}\dots b_{i}, \C(b))$ refers to  the distance between $b_{1}\dots b_{i}$ and  $\C(b)$ seen as a right $K$-invariant subset of $G$. 

Recall that when $G$ has rank $1$ (e.g. $G=SO_{e}(d,1)$ or $G=SU(d,1)$), then a Weyl chamber  of $G/K$ is just a geodesic ray for the symmetric space structure of $G/K$. In higher rank, it corresponds to a convex cone in a maximal flat of $G/K$. The result we prove is actually slightly more precise than \Cref{th-traj-geod}:  the map $b\mapsto \C(b)$ is  explicit  in terms of the Cartan decompositions of $(b_{1}\dots b_{n})_{n\geq 1}$, and we  bound the distance between $b_{1}\dots b_{n}$ and a particular point in $\C(b)$  (see \Cref{ThAbis}).

  In concrete linear algebra terms, for  $G=SL_{d}(\R)$, $K=SO_{d}(\R)$, we can set $\ka^+=\{t=\text{diag}(t_{1}, \dots, t_{d}), \,t_{1}\geq \dots\geq t_{d},\, \sum t_{i}=0\}$ and choose $\C(b)$ of the form $\C(b)=k_{\infty}(b)\exp(\ka^+)K$ where $k_{\infty}(b)\in K$.  Writing $b_{1}\dots b_{n}K=k_{n}(b)\exp(t_{n}(b))K$  where $t_{n}(b)\in \ka^+$ is the so-called Cartan projection, our deviation result  \emph{bounds the  difference of angle $k^{-1}_{n}(b)k_{\infty}(b)$ in terms of $t_{n}(b)$}:  for all $\varepsilon>0$,  for $\beta$-typical $b\in B$, there is a subset $S_{\varepsilon, b}\subseteq \N^*$ of asymptotic density at least $(1-\varepsilon)$ such that 
$$\,\,\,\,\,\,\,\,\,\,\,\,(k^{-1}_{n}(b)k_{\infty}(b))_{i,j} =O_{\varepsilon}(\exp(t_{n}(b)_{i} -t_{n}(b)_{j})) \,\,\,\,\,\,\,\,\,\,\,\,\,\,\,\,\,\,\,\,\,\text{ for $n\in S_{\varepsilon, b}$, $i>j$}$$
   In particular $k^{-1}_{n}(b)k_{\infty}(b)$ converges to $\text{Id}$ (up to sign of coefficients) at  speed $O_{\varepsilon}\left(\exp(-\min_{i\neq j} |t_{n}(b)_{i} -t_{n}(b)_{j}|)\right)$ along $n \in S_{\varepsilon, b}$. More information on the asymptotic behavior of $t_{n}(b)$ can be found in \cite{BQRW} (in particular \cite[Th. 10.9]{BQRW}).
   

Finally, we emphasize that \emph{no moment assumption} is made on $\mu$ in \Cref{th-traj-geod}. In contrast, the logarithmic bounds of \cite{Anc88, Led, BHM, Sis17} all rely strongly on the hypothesis of a finite exponential moment (or at least the H\"older regularity of the harmonic measure). 
Note also that \Cref{th-traj-geod} is already new in the case where the $\mu$-walk is a (discrete) Brownian motion.

 \bigskip

\bigskip

Our second theorem considers the $\mu$-random walk induced on a quotient $X=\Lambda\backslash G$, where $\Lambda$ is a discrete subgroup of $G$, and characterizes the situations of recurrence or transience in terms of the geodesic flow. A first result of this kind was obtained by Tsuji \cite{Tsu59}, who built on Hopf's alternative \cite{Hopf39, Hopf71} to prove that the Brownian motion and the geodesic flow on a hyperbolic surface are either both recurrent ergodic, or both transient. Sullivan \cite{Sul} extended Hopf-Tsuji Theorem to  hyperbolic manifolds of arbitrary dimension, and Kaimanovich \cite{Kai} pushed it even further, dealing with walks on rank one symmetric spaces. However,   all these theorems concern a Brownian motion  or at least a spread-out random walk, i.e. a $\mu$-walk such that $\mu$ (or a convolution power) is absolutely continuous with respect to the Haar measure on $G$. In this paper, we extend them to walks determined  by an arbitrary Zariski-dense probability measure $\mu$ with a finite first moment, meaning that
$$\int_{G}\log||\text{Ad}g||\,d\mu(g)<\infty $$

\bigskip

Let us now prepare our statement  by recalling some basic definitions.  More details can be found in \cite[1.2-1.3]{Tim-these}.

Assume $G$ has rank one and let $\mathfrak{a}\subseteq \mathfrak{g}$ be a Cartan subspace of $G$ which is orthogonal to the Lie algebra of $K$ for the Killing form. For $t\in\mathfrak{a}$, set $a_{t}=\exp(t)\in G$. The one-parameter subgroup $(a_{t})_{t\in\mathfrak{a}}$ acts by right multiplication on $X$,  inducing a flow that we call the \emph{geodesic flow}. This terminology is natural as any geodesic path  on the locally symmetric space $X/K$  is of the form $t\mapsto xa_{t}K$ for a suitable $x\in X$ \cite[Proposition 4.3]{Paul-riem}.

Let $F\subseteq X$ be a compact set of positive Haar measure. The \emph{Green function} of the $\mu$-walk associated to $F$, denoted by $G_{\mu}(.,F) : X\rightarrow [0,+\infty]$,  estimates the average time spent in $F$ by the $\mu$-trajectories starting at a given point.   We may define in a similar way a Green function for the geodesic flow  $G(.,F) : X\rightarrow [0,+\infty]$. The precise formulas are 
$$G_{\mu}(x,F)=\int_{B}\sum_{n\geq 0}1_{F}(xb_{1}\dots b_{n})\, d\beta(b)\,\,\,\,\,\,\,\,\,\,\,\,\,\,\,G(x,F) = \int_{\mathfrak{a}^+}1_{F}(xa_{t}) \,dt$$
where $\mathfrak{a}^+\subseteq \mathfrak{a}$ is a fixed Weyl chamber of $\mathfrak{a}$.

The $\mu$-walk on $X$ is  \emph{recurrent} (resp. \emph{transient}) if for almost-every $x\in X$, $\beta$-almost every $b\in B$, the trajectory $(xb_{1}\dots b_{n})_{n\geq 0}$ sub-converges to $x$ (resp. leaves every compact). It is equivalent to say that for every $F$, almost-every $x\in X$, one has $G_{\mu}(x,F) \in \{0, +\infty\}$ (resp. $G_{\mu}(x, F)<+\infty$). 

The $\mu$-walk on $X$ is \emph{ergodic} if the subgroup $\Gamma_{\mu}$ of $G$ generated by the support of $\mu$ acts ergodically on $X$ for the Haar measure. In the context of a recurrent random walk, this amounts to say  that $G_{\mu}(x,F)=\infty$ for every $F$ and almost-every $x\in X$. 

Analogous definitions of recurrence, transience, ergodicity hold for the geodesic flow on $X$.

\newpage
\smallskip

\begin{theorem}[Recurrence criterion] \label{th-rec/trans}
Let $G$ be a  connected simple real Lie group of rank one, $\Lambda\subseteq G$ a discrete subgroup, $X=\Lambda\backslash G$. Let $\mu$ be a probability measure on $G$ with a finite first moment and with $\Gamma_{\mu}$ Zariski-dense in $G$. 

 Then the $\mu$-walk and  the geodesic flow  on $X$ are either both recurrent  ergodic, or both transient with  locally integrable Green functions. 
\end{theorem}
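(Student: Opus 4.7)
The strategy is to recast the dichotomy in terms of the walk Green function $G_\mu(x,F)$ and the geodesic Green function $G(x,F)$, and to use Theorem~\ref{th-traj-geod} as a bridge between the two via the Cartan decomposition. Fix a compact $F\subseteq X$ of positive Haar measure; I write $F_R$ for its $R$-neighborhood and fix a generic $x\in X$ with lift $\tilde{x}\in G$.

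\textbf{Tracking setup.} Write the Cartan decomposition $b_1\cdots b_n=k_n(b)\,a_{t_n(b)}\,k_n'(b)$ with $t_n(b)\in\ka^+$. Theorem~\ref{th-traj-geod}, in the refined form announced in the introduction, provides $k_\infty(b)\in K$ such that for every $\varepsilon>0$ there exists $R=R(\varepsilon)$ with $d(b_1\cdots b_n,\,k_\infty(b)\,a_{t_n(b)})\leq R$ for all $n$ in a set $S_{\varepsilon,b}\subseteq\N^\star$ of lower density at least $1-\varepsilon$, $\beta$-almost surely; by left-invariance of $d$ the same bound holds after left-translating by $\tilde x$ on both sides. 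The finite first moment of $\mu$ combined with Guivarc'h's law of large numbers gives $t_n(b)/n\to\lambda>0$ almost surely, so along $S_{\varepsilon,b}$ the walk essentially parametrises the geodesic ray $t\mapsto x\,k_\infty(b)\,a_t$ with asymptotically linear spacing $\lambda$.

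\textbf{Equivalence.} For ``flow recurrent $\Rightarrow$ walk recurrent'', assume $G(y,F)=\infty$ for a.e.\ $y\in X$. Right-invariance of Haar on $X$ makes $y=x\,k_\infty(b)$ equidistributed in $X$ for each fixed $b$, so by Fubini $G(x\,k_\infty(b),F)=\infty$ for $\beta$-a.e.\ $b$ and a.e.\ $x$. The tracking property combined with $t_n(b)/n\to\lambda$ then forces $x\,b_1\cdots b_n\in F_R$ for infinitely many $n\in S_{\varepsilon,b}$, hence $G_\mu(x,F_R)=\infty$. The reverse implication is symmetric: from $G_\mu(x,F)=\infty$ and $\varepsilon<1$ one extracts the infinite sub-sum $\sum_{n\in S_{\varepsilon,b}}1_F(x\,b_1\cdots b_n)$, bounded above (by tracking) by $\#\{n:x\,k_\infty(b)\,a_{t_n(b)}\in F_R\}$; the asymptotically linear spacing converts this discrete divergence into $G(x\,k_\infty(b),F_R)=\infty$, and a Fubini in $b$ yields flow recurrence. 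Ergodicity in the recurrent case is extracted from the conservativity just proved via the usual Kaimanovich-type $0$--$1$ argument, using that Zariski density of $\Gamma_\mu$ forces a unique $\mu$-stationary measure on the Furstenberg boundary. Local integrability of the Green functions in the transient case follows from a standard duality/Fubini argument comparing forward and backward Green functions, both finite a.e.\ by transience.

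\textbf{Main obstacle.} The principal difficulty lies in the tracking steps above: Theorem~\ref{th-traj-geod} controls only the density of good indices $S_{\varepsilon,b}$, not how the walk's returns to $F$ distribute between good and bad indices; in principle all the hits could concentrate on the bad set. One overcomes this by applying Theorem~\ref{th-traj-geod} with $\varepsilon$ arbitrarily small, so that the bad set becomes negligibly sparse, and by using that along $S_{\varepsilon,b}$ the walk covers the same region of $X$ as its tracking geodesic up to an additive $R(\varepsilon)$ in the metric $d$. A secondary subtlety is converting a sum over the discrete times $(t_n(b))$ into an integral over $\ka^+$; this is handled by thickening $F$ to $F_R$ and exploiting the asymptotic linear spacing of the $t_n(b)$.
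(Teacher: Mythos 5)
Your overall plan---use Theorem~\ref{th-traj-geod} to transfer return-time information between the geodesic flow and the walk, via the tracking point $x\,k_\infty(b)\,a_{t_n(b)}$---is indeed the spirit of the paper's proof. And you have correctly identified the main obstacle: the return times of the geodesic ray to a compact set have density zero in $\N$, while Theorem~\ref{th-traj-geod} controls only a set of density $1-\varepsilon$. But the fix you propose does not close the gap. Taking $\varepsilon$ arbitrarily small is of no help: the complement $\N\setminus S_{\varepsilon,b}$ still has positive density (or at least can be infinite), so it can in principle contain the entire zero-density set of return times, no matter how small $\varepsilon>0$ is. The mismatch between a zero-density set and a density-$(1-\varepsilon)$ set cannot be resolved by shrinking $\varepsilon$; it requires a structurally different argument.

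What the paper actually does to get around this is substantially more delicate. First, it needs \emph{renewal theory} for the Cartan projection, which is absent from your proposal. Specifically, it establishes (Prop.~\ref{ren-lem1}, whose proof relies on Kesten's renewal theorem for the Iwasawa cocycle and on \cite[Cor.~4.8]{BQRW}) that for $\beta$-a.e.\ $b$ the sequence $(t_n(b))_n$ visits translates $I+S$ infinitely often whenever $I$ is a large enough interval and $S\subseteq\R_+$ is unbounded. The law of large numbers $t_n(b)/n\to\lambda_\mu$ that you invoke is much too weak for this: it only pins down $t_n(b)$ up to $o(n)$, which gives no information at the bounded scale at which one must hit a fixed interval. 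Second, after this renewal input produces the infinite set $\mathscr{N}_{x,b}=\{n: x\,a_{t_n(b)}\in L\}$, the paper reduces the problem---via a Fubini argument that lets one drop $k_\infty(b)$---to a statement about $d(e, F(b_n^{-1}\cdots b_1^{-1}\xi_0^-,\,\xi_{T^n b}))$. It then introduces stopping times along $\mathscr{N}_{x,b}$ separated by carefully chosen gaps $k_n$, replaces $\xi_{T^n b}$ by a randomised proxy $b_{n+1}\cdots b_{n+k_n}\xi_n$ (Lemma~\ref{remp}), and exploits the Markov property together with a uniform positivity bound (Lemma~\ref{k0}) to show that the flat-distance condition holds at infinitely many of the renewal return times. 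This conditioning on the situation at time $n$ is what genuinely decouples the two zero/positive-density conditions; Birkhoff's theorem alone cannot do it. Third, for the transient direction, the paper does not use a ``duality/Fubini'' argument; it compares the two Green functions directly and needs a second renewal estimate (Prop.~\ref{ren-lem2}, a uniform bound $\sup_t \mathbb{E}_\beta\#\{n: t_n\in I+t\}<\infty$), which again is not implied by the LLN. Finally, ergodicity of the walk is obtained not from a $0$--$1$ law on the boundary but from the observation that $\Gamma_\mu$ contains a loxodromic element, whose right action on $X$ is conservative ergodic by a Hopf-type argument (Lemmas~\ref{erg} and~\ref{lox}). You should regard the renewal propositions and the stopping-time/Markov-property mechanism as the core technical contributions that your proposal is missing.
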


\smallskip
A striking consequence is that the recurrence properties of a walk on a rank-one homogeneous space do not depend on the generating measure $\mu$ but only on the geometry of the  space. Also note that in the particular case where $G=Sp(1,m)$ for some $m\geq 2$, or $G=F^{-20}_{4}$,  the geodesic flow (or equivalently the $\mu$-walk) is always transient as long as $\Lambda$ has infinite covolume in $G$ (see \Cref{Kaimth} and the subsequent remark).

\bigskip

\noindent{\bf Organization of the paper.} 

\Cref{Sec-traj-geod} is dedicated to \Cref{th-traj-geod}. It is the occasion  to set the notations,  recall the dynamics of Zariski-dense random walks on the flag variety of $G$, and introduce a parametrization of the set of maximal flats of $G/K$ which will  also be useful in the rest of the paper.

 \Cref{rec-crit} is dedicated to \Cref{th-rec/trans}. The proof that the transience of the geodesic flow on $X$ implies the transience of the walks is entirely contained in the section. The converse relies on the framework that ermerged in  \Cref{Sec-traj-geod}, more specifically \Cref{distform}. Both statements use renewal results for the Cartan projection on rank one simple real Lie groups.
 
\Cref{Hopfdich}  is an appendix that recalls the Hopf alternative for the geodesic flow, usually stated in terms of Poincaré series, and explains how it can be formulated via the Green functions of the flow.

\bigskip
\noindent{\bf Acknowledgments.} The author is grateful to Yves Benoist and to the anonymous referee for their helpful comments.

\bigskip

\section{Random trajectories and  asymptotic Weyl chambers} \label{Sec-traj-geod}

This section is dedicated to the proof \Cref{th-traj-geod}.

\subsection{Notations and strategy} \label{sec-not/strat}

Throughout \Cref{Sec-traj-geod}, we denote by  $G$  a  connected non-compact semisimple  real Lie group with finite center, $K\subseteq G$ a maximal compact subgroup, $\mathfrak{g}, \mathfrak{k}$ their respective Lie algebras, $\mathfrak{a}\subseteq \mathfrak{g}$ a Cartan subspace orthogonal to $\mathfrak{k}$ for the Killing form, $\mathfrak{a}^+\subseteq \mathfrak{a}$ a closed Weyl chamber of $\mathfrak{a}$, and for $t\in \mathfrak{a}$ we set $a_{t}=\exp(t)\in G$.   Recall that in this context, every element $g\in G$ can be written as $g=k_{g}a_{t_{g}}l_{g}$ where $k_{g}, l_{g}\in K$, $t_{g}\in \mathfrak{a}^+$. This is called the Cartan decomposition, and the element $t_{g}$ is unique and called the Cartan projection of $g$. 

Let $\mu$ be a probability measure on $G$ and $\Gamma_{\mu}\subseteq G$ the subgroup generated by its support. We suppose that $\Gamma_{\mu}$ is Zariski-dense in $G$, but make no moment assumption on $\mu$. We denote by $(B, \beta)=(G^{\N^\star}, \mu^{\otimes \N^\star })$ the space of instructions guiding the $\mu$-walk on $G$.

\bigskip
 
{\bf The proof of \Cref{th-traj-geod} can be summarized as follows}. We fix for every $b\in B$, $n\geq 0$ a Cartan decomposition 
$$b_{1}\dots b_{n}= k_{n}(b)a_{t_{n}(b)}l_{n}(b) $$
 \Cref{dynamicsP} guarantees that we can choose those decompositions such that for $\beta$-almost every $b\in B$, the sequence $(k_{n}(b))\in K^{\N^\star}$ has a limit $k_{\infty}(b)\in K$. We aim to control the distance $d(b_{1}\dots b_{n}, k_{\infty}(b)a_{t_{n}(b)})$ given a left $G$-invariant riemmanian metric on $G$. To this end, we define the flag varieties $G/P^-$, $G/P$, denote by $\xi^-_{0} =P^-/P^-$, $\xi_{0}=P/P$ their basepoints, and set $\FF^+=G.(\xi^-_{0}, \xi_{0})$ the $G$-orbit of $(\xi^-_{0}, \xi_{0})$ in $G/P^-\times G/P$. \Cref{param-flat} introduces a map $(\xi^-, \xi)\mapsto F(\xi^-,\xi)$ from $\FF^+$ to the space of maximal flats in $G/K$. It is a $G$-equivariant cover such that $F(\xi^-_{0},\xi_{0})=\exp(\mathfrak{a})K$.  We see in \Cref{sec-distform}  that for some constant $C_{0}>0$, every $\xi^-\in G/P^-$, $\beta$-almost every $b\in B$, large enough $n\geq 0$, 
$$d(b_{1}\dots b_{n}, k_{\infty}(b)a_{t_{n}(b)})\leq C_{0}d(b_{1}\dots b_{n}, F(\xi^-, \xi_{b}))+C_{0} $$
where $\xi_{b}=k_{\infty}(b)\xi_{0}$ is the limit point of $(b_{1}\dots b_{n})$ on the  flag variety $G/P$. Finally, in \Cref{sec-proof-traj-geod}, we use Birkhoff Ergodic Theorem to control the right-hand side of the previous inequality and conclude the proof of \Cref{th-traj-geod}. 
 
\bigskip
\noindent{\bf Remark.} The method of proof is inspired by Ledrappier's paper \cite{Led}  establishing logarithmic deviation on free groups between sample paths  and their asymptotic geodesic rays, under a finite exponential moment condition. As pointed out by the referee, it is also related to \cite{Tio15} in which Tiozzo proves sublinear deviation of sample paths under very general assumptions.

\subsection{Dynamics of walks on the flag variety} \label{dynamicsP}

We recall here basic facts about the dynamics of the $\mu$-walk on the flag variety of $G$. The existence of the limit direction $k_{\infty}$ introduced in \ref{sec-not/strat} is also justified.

\bigskip

Let us begin with the definition of the flag variety. For $\alpha \in  \mathfrak{a}^\star$, set 
$$\mathfrak{g}_{\alpha}:=\{s\in \mathfrak{g},\, \forall t \in \mathfrak{a}, \,\, (\text{ad}\,t)(s)=\alpha(t)s \}$$
 As the action of $(\text{ad}\,t)_{t\in \mathfrak{a}}$ on $\mathfrak{g}$ is simultaneously diagonalizable, we can write 
$$\mathfrak{g}= \mathfrak{g}_{0} \oplus \bigoplus_{\alpha \in \Phi}\mathfrak{g}_{\alpha} $$ 
  where $\Phi :=\{\alpha \in \mathfrak{a}^\star\smallsetminus\{0\},\, \mathfrak{g}_{\alpha}\neq \{0\} \}$ is the root system of $\mathfrak{a}$. 
   Denote by $\Phi^+\subseteq \Phi$ the subset of positive roots given by $\mathfrak{a}^+$, namely $\Phi^+=\{\alpha\in \Phi,\, \alpha(\mathfrak{a}^+)\subseteq \R^{+}\}$. Set  $\mathfrak{u}=\bigoplus_{\alpha \in \Phi^+}\mathfrak{g}_{\alpha}$
and let $P=N_{G}(\mathfrak{g}_{0}\oplus\mathfrak{u})$ be the  subgroup of elements $g\in G$ whose adjoint action preserves the subspace $\mathfrak{g}_{0}\oplus\mathfrak{u}$. The \emph{flag variety} of $G$ is defined as the quotient 
$\mathscr{P}=G/P$. We set $\xi_{0}=P/P$ the standard base point.

\bigskip

It will often be convenient to work with a concrete realisation of $\PP$ as some $G$-orbit in a product of projective spaces. To this end, we recall the following fact \cite[Lemma 6.32]{BQRW}. Let $\Pi$ be the basis of $\Phi$ prescribed by $\Phi^+$.

\begin{fact} \label{Fact1}
There exists a family $(V_{\alpha}, \rho_{\alpha})_{\alpha\in \Pi}$  of proximal  irreducible algebraic representations of $G$ such that 
\begin{itemize}
\item denoting by $\xi_{\alpha}\in \mathbb{P}(V_{\alpha})$ the line of highest weight of $(V_{\alpha}, \rho_{\alpha})$, we have a $G$-equivariant embedding 
$$\PP\hookrightarrow \prod_{\alpha\in \Pi} \mathbb{P}(V_{\alpha}), \,\,g\xi_{0}\mapsto (\rho_{\alpha}(g)\xi_{\alpha})_{\alpha\in \Pi}  $$
\item the heighest weights $(\chi_{\alpha})_{\alpha\in \Pi}$ form a basis of $\mathfrak{a}^\star$. Moreover $\chi_{\alpha}-\alpha$ is also a weight of $(V_{\alpha}, \rho_{\alpha})$.
\end{itemize}
\end{fact}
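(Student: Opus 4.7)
The plan is to realise each $(V_\alpha,\rho_\alpha)$ as a carefully chosen fundamental representation of $G$ and then verify the three asserted properties (embedding, basis of $\mathfrak{a}^\star$, subdominant weight $\chi_\alpha-\alpha$) one at a time.

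First I would invoke the classification of irreducible algebraic representations of the complexified Lie algebra $\mathfrak{g}_{\mathbb{C}}$ by their dominant weights in $\mathfrak{a}^\star_{\mathbb{C}}$. For each simple root $\alpha\in\Pi$, let $\varpi_\alpha\in\mathfrak{a}^\star$ be the corresponding fundamental weight, defined by $\langle \varpi_\alpha,\beta^\vee\rangle=\delta_{\alpha,\beta}$ on the simple coroots, so that $(\varpi_\alpha)_{\alpha\in\Pi}$ is a basis of $\mathfrak{a}^\star$ dual to the basis of simple coroots. Since $G$ is semisimple with finite centre but not necessarily simply connected, $\varpi_\alpha$ need not integrate to a character of the analytic torus $\exp(\mathfrak{a})$; I would fix the smallest integer $n_\alpha\geq 1$ such that $\chi_\alpha:=n_\alpha\varpi_\alpha$ lies in the weight lattice of $G$, and let $(V_\alpha,\rho_\alpha)$ be the irreducible algebraic representation of $G$ of highest weight $\chi_\alpha$. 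This immediately gives the basis property $(\chi_\alpha)_{\alpha\in\Pi}$, and the weight identity $\chi_\alpha-\alpha\in\mathrm{wts}(V_\alpha)$ follows by evaluating a lowering operator: if $v_+\in V_\alpha$ is a highest weight vector and $\{e_\alpha,f_\alpha,h_\alpha\}$ is the $\mathfrak{sl}_2$-triple associated to $\alpha$, then $f_\alpha\cdot v_+$ has weight $\chi_\alpha-\alpha$ and is nonzero because $\langle\chi_\alpha,\alpha^\vee\rangle=n_\alpha>0$ implies $e_\alpha f_\alpha\cdot v_+=n_\alpha v_+\neq 0$.

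Next I would establish proximality: in an irreducible highest weight representation, the highest weight space $\mathbb{C}v_+$ is one-dimensional, and for any regular $t\in\mathrm{Int}(\mathfrak{a}^+)$, the dominant weight $\chi_\alpha(t)$ strictly exceeds every other weight evaluation, so $\rho_\alpha(\exp t)$ has a unique simple eigenvalue of maximal modulus, realised on $\xi_\alpha=[v_+]$. This is exactly proximality, and the corresponding attracting line is $\xi_\alpha$.

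For the embedding property, let $P_\alpha$ be the stabiliser in $G$ of the line $\xi_\alpha\in\mathbb{P}(V_\alpha)$. From the weight decomposition of $V_\alpha$ one checks that $\mathrm{Lie}(P_\alpha)$ contains $\mathfrak{g}_0\oplus\mathfrak{u}$ together with all $\mathfrak{g}_{-\beta}$ for positive roots $\beta$ that do not involve $\alpha$ in their $\Pi$-decomposition, so $P_\alpha$ is the maximal parabolic obtained by removing the node $\alpha$ from the Dynkin diagram. Consequently $P\subseteq\bigcap_{\alpha\in\Pi}P_\alpha$, and the reverse inclusion follows because the negative root spaces $\mathfrak{g}_{-\alpha}$ for $\alpha\in\Pi$ generate all of $\mathfrak{u}^-$ as a Lie algebra, forcing $\bigcap_\alpha \mathrm{Lie}(P_\alpha)=\mathfrak{g}_0\oplus\mathfrak{u}$. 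The $G$-equivariant map $g\xi_0\mapsto(\rho_\alpha(g)\xi_\alpha)_{\alpha\in\Pi}$ is therefore well-defined with stabiliser of the base-point equal to $\bigcap_\alpha P_\alpha=P$, hence injective.

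The main obstacle is the descent issue hidden in the first step: it is not automatic that an integer multiple of $\varpi_\alpha$ lies in the character lattice of the specific real Lie group $G$, and one must verify that such a minimal $n_\alpha$ exists, which reduces to finiteness of the centre of $G$ --- the only place where this global hypothesis enters. Everything else is essentially weight-theoretic bookkeeping once the representations are produced.
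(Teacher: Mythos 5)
Your approach is the classical one (Tits, Abels--Margulis--Soifer), and for a split group like $SL_d(\R)$ it would go through with only minor tidying. The paper itself does not prove Fact 1; it cites it from Benoist--Quint, \cite[Lemma 6.32]{BQRW}. So what is being compared here is your attempt against that reference.

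There is, however, a genuine gap that makes your argument break down for non-split real groups. Throughout, you treat $\Phi\subseteq\mathfrak{a}^\star$ as though it were the complex root system and $\mathfrak{a}$ as though it were a full Cartan subalgebra. But in the paper $\mathfrak{a}$ is a Cartan \emph{subspace} (a maximal abelian subalgebra of $\mathfrak{k}^\perp$) and $\Phi$ is the \emph{restricted} root system. For a non-split $G$ (e.g.\ $SU(p,q)$, $SO(p,q)$ with $p\neq q$, $Sp(p,q)$), $\mathfrak{a}$ is a proper subspace of a Cartan subalgebra $\mathfrak{h}$, and the classification of irreducible representations of $\mathfrak{g}_\C$ is by dominant weights in $\mathfrak{h}^\star_\C$, not in $\mathfrak{a}^\star$. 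Consequently the phrase ``the irreducible algebraic representation of $G$ of highest weight $\chi_\alpha$'' for a prescribed $\chi_\alpha\in\mathfrak{a}^\star$ is not well-defined: one must first choose a complex dominant weight $\lambda\in\mathfrak{h}^\star_\C$ restricting to $\chi_\alpha$ on $\mathfrak{a}$, and the existence and choice of such $\lambda$ is not automatic.

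This same confusion hides the real obstruction to proximality. You argue that the highest weight space $\C v_+$ is one-dimensional, hence $\rho_\alpha(\exp t)$ has a simple top eigenvalue for regular $t$. But the relevant eigenspace is the \emph{restricted} highest weight space --- the sum of all $\mathfrak{h}$-weight spaces whose weight restricts to $\chi_\alpha$ on $\mathfrak{a}$ --- and for a non-split group this space is typically of dimension $>1$ even when the complex highest weight space is a line. (The extreme case is a compact factor, where $\mathfrak{a}=0$ and the unique restricted weight space is all of $V$.) Proximality, i.e.\ one-dimensionality of this restricted space, is precisely the nontrivial content that has to be arranged and proved; it is the heart of Tits' lemma on strongly rational representations of reductive groups over general fields, which is what \cite[Lemma 6.32]{BQRW} actually invokes. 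Your sketch gives no mechanism for ensuring it. The ``descent issue'' you flag at the end (integrality of $n_\alpha\varpi_\alpha$ for the character lattice of $G$) is real but secondary; the true difficulty is the passage from complex weight spaces to restricted weight spaces, and that is the point your argument silently elides.

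The embedding argument and the check that $\chi_\alpha-\alpha$ is a weight are fine modulo the same caveat: once a genuinely proximal $V_\alpha$ with restricted highest weight proportional to $\varpi_\alpha$ has been produced (which is the part you did not establish), showing $\bigcap_\alpha P_\alpha=P$ and exhibiting the lowering-operator computation are routine.
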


We equip each $V_{\alpha}$ with a scalar product  that is $\rho_{\alpha}(K)$-invariant  and such that every element of $T\rho_{\alpha}(\mathfrak{a})$ is self-adjoint \cite[Lemma 6.33]{BQRW}. 
\bigskip

We know by \cite[Proposition 10.1]{BQRW} that $\PP$ admits a unique stationary probability measure, we call it $\nu_{\PP}$. This measure gives no mass to proper subvarieties, and is proximal: for $\beta$-almost every $b\in B$, the limit measure $\nu_{\PP, b}=\lim (b_{1}\dots b_{n})_{\star}\nu_{\PP}$ is a Dirac mass $\nu_{\PP,b}=\delta_{\xi_{b}}$.  We then have $\nu_{\PP}=\int_{B}\delta_{\xi_{b}} d\beta(b)$. The $\alpha$-coordinate map $\psi_{\alpha}:\PP\rightarrow \mathbb{P}(V_{\alpha}), g\xi_{0}\mapsto \rho_{\alpha}(g)\xi_{\alpha}$ sends $\nu_{\PP}$ to the unique $\mu$-stationary probability measure $\nu_{\mathbb{P}(V_{\alpha})}$ on  $\mathbb{P}(V_{\alpha})$. It is atom free and proximal, with limit measures $\nu_{\mathbb{P}(V_{\alpha}),b}=\delta_{\xi_{b, \alpha}}$ where $\xi_{b, \alpha}=\psi_{\alpha}(\xi_{b})$.



\bigskip
Recall that we have fixed for every $b\in B$, $n\geq0$, a  Cartan decomposition $b_{1}\dots b_{n}=k_{n}(b)a_{t_{n}(b)}l_{n}(b)$. We show in \Cref{lem-F1}  that the sequence $(k_{n}(b)\xi_{0})_{n\geq0}$ converges in $\PP$. As $\PP=K/M$ where $M=Z_{K}(\mathfrak{a})$, this result ensures we can always choose our decompositions so that $(k_{n}(b))_{n\geq0}$ converges in $K$,  justifying  the definition of $k_{\infty}$ given in \Cref{sec-not/strat}.

\begin{lemme}\label{lem-F1}
For $\beta$-almost every $b\in B$, one has the convergence in $\PP$
$$k_{n}(b)\xi_{0}\underset{n\to +\infty}{\longrightarrow} \xi_{b}  $$
\end{lemme}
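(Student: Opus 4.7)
The plan is to use the $G$-equivariant embedding $\PP \hookrightarrow \prod_{\alpha \in \Pi}\mathbb{P}(V_{\alpha})$ from Fact~1 to reduce the lemma to the following claim: for every $\alpha\in\Pi$ and $\beta$-a.e.\ $b\in B$, the sequence $\rho_{\alpha}(k_{n}(b))\,\xi_{\alpha}$ converges to $\xi_{b,\alpha}$ in $\mathbb{P}(V_{\alpha})$.

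The starting point is that the Cartan decomposition of $G$ projects to a polar (singular value) decomposition in $V_{\alpha}$. Indeed, since the scalar product on $V_{\alpha}$ is $\rho_{\alpha}(K)$-invariant and $T\rho_{\alpha}(\mathfrak{a})$ is self-adjoint, the equality $\rho_{\alpha}(b_{1}\cdots b_{n}) = \rho_{\alpha}(k_{n}(b))\,\rho_{\alpha}(a_{t_{n}(b)})\,\rho_{\alpha}(l_{n}(b))$ is a $KAK$ factorization in $GL(V_{\alpha})$. Proximality of $(V_{\alpha}, \rho_{\alpha})$ ensures that the highest weight $\chi_{\alpha}$ has multiplicity one and is attained on $\xi_{\alpha}$. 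Consequently, the top singular direction of $\rho_{\alpha}(b_{1}\cdots b_{n})$ is $\rho_{\alpha}(k_{n}(b))\,\xi_{\alpha}$, with singular value $e^{\chi_{\alpha}(t_{n}(b))}$.

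I would then combine this with the proximality statement recalled just before the lemma: for $\beta$-a.e.\ $b\in B$, $\rho_{\alpha}(b_{1}\cdots b_{n})_{\star}\nu_{\mathbb{P}(V_{\alpha})}$ converges weakly to $\delta_{\xi_{b,\alpha}}$, and $\nu_{\mathbb{P}(V_{\alpha})}$ gives zero mass to every projective hyperplane (as it is the image of $\nu_{\PP}$ under $\psi_{\alpha}$, and $\nu_{\PP}$ charges no proper subvariety). The conclusion then follows from an elementary linear algebra fact: if $(g_{n})\subseteq GL(V_{\alpha})$ and $\eta$ is a probability measure on $\mathbb{P}(V_{\alpha})$ giving zero mass to every projective hyperplane, and if $(g_{n})_{\star}\eta \to \delta_{\xi}$ weakly, then the ratio of the top two singular values of $g_{n}$ tends to infinity and the top singular direction of $g_{n}$ converges to $\xi$. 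Applied to $g_{n} = \rho_{\alpha}(b_{1}\cdots b_{n})$ and $\eta = \nu_{\mathbb{P}(V_{\alpha})}$, this gives the claim.

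The only real obstacle is the elementary fact above, which I would prove by contradiction using polar decompositions. If the ratio of the top two singular values of $g_{n}$ stayed bounded along a subsequence, then $g_{n}/\|g_{n}\|$ would have a further subsequential limit of rank at least two; since $\eta$ charges the complement of any hyperplane, and in particular of the kernel of that limit, its pushforward would be supported on a projective subspace of dimension $\geq 1$, contradicting weak convergence to $\delta_{\xi}$. Once the gap is known to diverge, a direct estimate on the $KAK$ factorization shows that $(g_{n})_{\star}\eta$ concentrates near the top singular direction of $g_{n}$, forcing the latter to converge to $\xi$. Crucially, the whole argument uses no moment assumption on $\mu$, consistently with the hypotheses of the lemma.
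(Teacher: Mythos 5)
Your argument is correct and follows essentially the same route as the paper: reduce to each coordinate $\alpha\in\Pi$ via the embedding of Fact~1, interpret the Cartan decomposition of $b_1\cdots b_n$ as a polar decomposition in $GL(V_\alpha)$, and combine the proximality of $\nu_{\mathbb{P}(V_\alpha)}$ with the fact that it charges no hyperplane to pin down the top singular direction $\rho_\alpha(k_n(b))\xi_\alpha$. The only difference is that where the paper cites \cite[Prop.\ 4.7]{BQRW} for the rank-one limit of $\rho_\alpha(b_1\cdots b_n)/\|\rho_\alpha(b_1\cdots b_n)\|$, you rederive that content as your ``elementary fact'', which is a sound self-contained substitute.
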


\smallskip
\begin{proof} It is enough to argue for each coordinate, i.e. show that for each $\alpha\in \Pi$, 
$$\rho_{\alpha}(k_{n}(b))\xi_{\alpha}\underset{n\to +\infty}{\longrightarrow} \xi_{b, \alpha}  $$
To lighten the proof, we just write $g$ for $\rho_{\alpha}(g)$.  

Notice that
$$\frac{b_{1}\dots b_{n}}{||b_{1}\dots b_{n}||}= k_{n}(b) \frac{a_{t_{n}(b)} }{||a_{t_{n}(b)}||} l_{n}(b)$$
\cite[Proposition 4.7]{BQRW} states that any accumulation point in $\text{End}(V_{\alpha})$ of the sequence $(\frac{b_{1}\dots b_{n}}{||b_{1}\dots b_{n}||})_{n\geq 1}$ must be of rank one with image $\xi_{b, \alpha}$. This yields that  the sequence $\frac{a_{t_{n}(b)}}{||a_{t_{n}(b)}||}$ converges to the orthogonal projection on the line of heighest weight $\xi_{\alpha}$. 
Let $k'_{\infty}(b)$ be some limit value of the sequence $k_{n}(b)$, and $\sigma : \N \rightarrow \N$  an extraction such that $k_{\sigma(n)}(b)\rightarrow k'_{\infty}(b)$ and $l_{\sigma(n)}(b)$  converges in $K$. Then $\frac{b_{1}\dots b_{\sigma(n)}}{||b_{1}\dots b_{\sigma(n)}||}$  converges as well and its limit has image $k'_{\infty}(b)\xi_{\alpha}$. Hence $\xi_{b, \alpha} =k'_{\infty}(b)\xi_{\alpha}$, which proves the lemma.

\end{proof}

\subsection{Parametrization of maximal flats} \label{param-flat}

Let $\PP^-$ be the opposite flag variety of $G$ defined by setting $\mathfrak{u}^-=\bigoplus_{\alpha \in \Phi\smallsetminus \Phi^+}\mathfrak{g}_{\alpha}$, $P^-=N_{G}(\mathfrak{g}_{0}\oplus\mathfrak{u}^-)$ and $\PP^-=G/P^-$. Let  $\xi^-_{0}=P^-$, $\xi_{0}=P$ be the base points of $\PP^-$, $\PP$ and denote by $\FF^+=G.(\xi^-_{0},\xi_{0})$ the (open) $G$-orbit of $(\xi^-_{0},\xi_{0})$ in $\PP^-\times \PP$. The following lemma associates to every pair of flags $(\xi^-,\xi)\in \FF^+$ a maximal flat of $G/K$. 

\bigskip
\begin{lemme}

There exists a unique $G$-equivariant map 
$$\begin{array}{rcl}
&\FF^+ \,\longrightarrow\,& \{\text{maximal flats of $G/K$}\}\\
&(\xi^-,\xi) \mapsto &F(\xi^-,\xi)
\end{array}$$
such that $(\xi^-_{0},\xi_{0})\mapsto \exp(\mathfrak{a})K$. Moreover this map is a Galois cover whose group of deck transformations is the Weyl group of $G$. 
\end{lemme}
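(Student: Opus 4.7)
The action of $G$ on $\FF^+$ is transitive by definition of $\FF^+$, so any $G$-equivariant map out of $\FF^+$ is determined by its value at the base pair $(\xi^-_{0},\xi_{0})$; this gives uniqueness. For existence, I would identify $\FF^+ \cong G/(P^-\cap P)$ and use the Langlands-type decompositions $P=MAN$, $P^-=MAN^-$ (with $A=\exp(\mathfrak{a})$, $M=Z_{K}(\mathfrak{a})$, $N=\exp(\mathfrak{u})$, $N^-=\exp(\mathfrak{u}^-)$) to conclude that $P\cap P^-=MA$. It is then enough to verify that the flat $\exp(\mathfrak{a})K=AK\subseteq G/K$ is stabilized by $MA$: $A$ acts on $AK$ by left translation, and $M$ centralizes $A$ and is contained in $K$, so $M$ fixes $AK$ pointwise. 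This legitimates the definition $F(g\cdot(\xi^-_{0},\xi_{0})):=gAK$, which is manifestly $G$-equivariant.

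\textbf{Covering structure.} Let $\mathcal{M}$ denote the space of maximal flats in $G/K$. Since any two maximal flats of $G/K$ are $K$-conjugate, $G$ acts transitively on $\mathcal{M}$, so $\mathcal{M}\cong G/H$ where $H=\mathrm{Stab}_{G}(AK)$. The key computation is that
$$H \,=\, N_{K}(\mathfrak{a})\cdot A.$$
The inclusion $\supseteq$ is immediate. For $\subseteq$, any $g\in H$ satisfies $gK\in AK$, so by Iwasawa (or by writing $gK=aK$ for some $a\in A$) we have $g=ak$ with $a\in A$, $k\in K$; then $k=a^{-1}g$ also preserves $AK$ and fixes $eK$, hence its differential at $eK$ preserves the tangent subspace $\mathfrak{a}\subseteq\mathfrak{p}$ of the flat, which forces $k\in N_{K}(\mathfrak{a})$. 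The map $F$ is thus identified with the natural projection $G/(MA)\twoheadrightarrow G/(N_{K}(\mathfrak{a})\cdot A)$, hence is a surjective submersion with discrete fibres.

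\textbf{Deck group.} The fibre over $AK$ is $(N_{K}(\mathfrak{a})\cdot A)/(MA)\cong N_{K}(\mathfrak{a})/M$, where I use $N_{K}(\mathfrak{a})\cap MA=M$: if $n=ma$ with $n\in K$, $m\in M\subseteq K$, $a\in A$, then $a=m^{-1}n\in A\cap K=\{e\}$. This quotient is precisely the Weyl group $W$. The group $N_{K}(\mathfrak{a})\cdot A$ acts on $\FF^+=G/(MA)$ by right multiplication, and this action descends to a free, fibre-transitive action of $W=N_{K}(\mathfrak{a})/M$ by $G$-equivariant automorphisms of $F$, exhibiting $F$ as a Galois cover with deck group $W$.

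\textbf{Main obstacle.} The only nontrivial point is the identification of $\mathrm{Stab}_{G}(AK)$ with $N_{K}(\mathfrak{a})\cdot A$, which requires knowing that an element of $K$ stabilizing the base flat must normalize $\mathfrak{a}$. Everything else is bookkeeping with parabolic subgroups and the standard identity $P\cap P^-=MA$, both of which are classical.
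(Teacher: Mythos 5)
Your proof is correct and follows essentially the same route as the paper: both identify $\FF^+$ with $G/(P^-\cap P)=G/Z_G(\mathfrak{a})$, the space of maximal flats with $G/N_G(\mathfrak{a})$, and recognize $F$ as the natural projection, which is a Galois cover with deck group $W=N_G(\mathfrak{a})/Z_G(\mathfrak{a})$. You simply fill in more of the standard details (the Langlands computation $P\cap P^-=MA$ and the stabilizer computation $\mathrm{Stab}_G(AK)=N_K(\mathfrak{a})\cdot A=N_G(\mathfrak{a})$) that the paper cites as classical.
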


\bigskip
\noindent{\bf Remark}. The map $F$ is also used in \cite[Section 10]{Kai00} to describe the Poisson boundary of walks on discrete subgroups of semi-simple Lie groups, via the strip approximation method.
\smallskip
\begin{proof}

We can identify $\FF^+$ to  $G/(P^-\cap P)=G/Z_{G}(\mathfrak{a})$ and the set of maximal flats of $G/K$ to $G/N_{G}(\mathfrak{a})$. The map we are to define is just the quotient projection map $G/Z_{G}(\mathfrak{a})\rightarrow G/N_{G}(\mathfrak{a})$. Moreover the Weyl group $W=Z_{G}(\mathfrak{a})\backslash N_{G}(\mathfrak{a})$ is finite and acts freely on $G/Z_{G}(\mathfrak{a})$ by right multiplication,  so the above projection map is a Galois cover whose group of deck transformations is $W$. 

\end{proof}

We now check that for every $\xi^-\in \PP^-$, and $\beta$-almost every $b\in B$, the maximal flat $F(\xi^-,\xi_{b})$ is well defined. To this end, we begin with the following criterion 

 \begin{lemme} \label{crit}
 Let $g_{1},g_{2}\in G$. 
 $$(g_{1}\xi^-_{0}, g_{2}\xi_{0})\in \FF^+ \iff g^{-1}_{1}g_{2}\in P^-P$$
 
\end{lemme}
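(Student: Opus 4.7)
The plan is to unfold the definition of $\FF^+$ as the $G$-orbit of $(\xi_0^-,\xi_0)$ and translate the orbit condition into a statement about stabilizers. Since $P^-$ is the stabilizer of $\xi_0^-$ in $G$ and $P$ is the stabilizer of $\xi_0$, the pair $(g_1\xi_0^-, g_2\xi_0)$ lies in $\FF^+$ if and only if there exists $h\in G$ such that simultaneously
\[
h\xi_0^-=g_1\xi_0^-\quad\text{and}\quad h\xi_0=g_2\xi_0,
\]
which is exactly the condition $h\in g_1P^-\cap g_2P$.

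Thus I would first observe that $(g_1\xi_0^-,g_2\xi_0)\in \FF^+$ is equivalent to the non-emptiness of $g_1P^-\cap g_2P$, and then rewrite this as $g_1^{-1}g_2\in P^-P$ by multiplying on the left by $g_1^{-1}$. This gives the $\Rightarrow$ direction. For the converse, I would start from a decomposition $g_1^{-1}g_2=p^-p$ with $p^-\in P^-$, $p\in P$, and set $h=g_1p^-=g_2p^{-1}$. Since $p^-$ stabilizes $\xi_0^-$ and $p^{-1}$ stabilizes $\xi_0$, one checks that $h\cdot(\xi_0^-,\xi_0)=(g_1\xi_0^-,g_2\xi_0)$, proving membership in $\FF^+$.

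There is no real obstacle in this argument: the only point that deserves a line of justification is the identification of the stabilizers of the base points $\xi_0^-$ and $\xi_0$ with $P^-$ and $P$ respectively, which is immediate from the very definitions $\xi_0^-=P^-/P^-$ and $\xi_0=P/P$. The lemma is thus really a restatement of the double coset description of the diagonal $G$-orbit on $G/P^-\times G/P$, and the proof fits in a few lines.
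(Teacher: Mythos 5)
Your proof is correct and follows essentially the same route as the paper: both unfold $\FF^+$ as the $G$-orbit of $(\xi_0^-,\xi_0)$, identify the stabilizers of the base points with $P^-$ and $P$, and translate the orbit condition into the double-coset condition $g_1^{-1}g_2\in P^-P$. The paper merely first left-translates by $g_1^{-1}$ before unwinding the orbit, whereas you unwind first and translate afterwards; this is the same argument up to reordering.
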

\smallskip

\begin{proof}
\begin{align*}
(g_{1}\xi^-_{0}, g_{2}\xi_{0}) \in \FF^+ &\iff (\xi^-_{0}, g^{-1}_{1} g_{2}\xi_{0}) \in G.(\xi^-_{0},\xi_{0})\\
&\iff \exists h\in G,\,h\xi^-_{0}=\xi^-_{0} \text{ and }  h\xi_{0}= g^{-1}_{1} g_{2}\xi_{0}\\
&\iff \exists h\in P^-,\, h^{-1} g^{-1}_{1} g_{2} \in P
\end{align*}

\end{proof}

 \begin{lemme}\label{lem-xib}
 For every $\xi^-\in \PP^-$, and $\beta$-almost every $b\in B$, one has $(\xi^-,\xi_{b})\in \FF^+$. 
 \end{lemme}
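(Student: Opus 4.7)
The plan is to exploit the fact that the complement of $\FF^+$ in $\PP^-\times \PP$ is, for each fixed first coordinate, a proper algebraic subvariety of $\PP$, and then to invoke the regularity of the stationary measure $\nu_{\PP}$ recalled in \Cref{dynamicsP}, which assigns no mass to proper subvarieties.

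First, I would fix $\xi^-\in \PP^-$ and write $\xi^-=g_{1}\xi^-_{0}$ for some $g_{1}\in G$. Applying the criterion of \Cref{crit} (with the same $g_{1}$ and arbitrary $g_{2}$), one sees that
$$\{\xi\in \PP : (\xi^-,\xi)\in \FF^+\}= g_{1}\cdot(P^-P/P)\subseteq \PP.$$
The big Bruhat cell $P^-P$ is Zariski-open and dense in $G$, so its image in $\PP$ is Zariski-open and dense, and therefore the ``bad'' set
$$A_{\xi^-}:=\{\xi\in \PP : (\xi^-,\xi)\notin \FF^+\}$$
is a proper Zariski-closed subset of $\PP$.

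Second, I would translate this into a statement about $b\in B$. From the identity $\nu_{\PP}=\int_{B}\delta_{\xi_{b}}\,d\beta(b)$ recalled in \Cref{dynamicsP}, the pushforward of $\beta$ under $b\mapsto \xi_{b}$ is exactly $\nu_{\PP}$. Combining this with the fact that $\nu_{\PP}$ gives zero mass to proper subvarieties of $\PP$, one obtains
$$\beta\bigl(\{b\in B : (\xi^-,\xi_{b})\notin \FF^+\}\bigr)=\nu_{\PP}(A_{\xi^-})=0,$$
which is exactly the statement of the lemma.

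There is no real obstacle here: the lemma is a one-line consequence of the Bruhat decomposition (to identify $A_{\xi^-}$ with the complement of a Zariski-open cell) and the proximality/regularity of $\nu_{\PP}$ (to discard this complement). The only point worth being careful about is that $\xi^-$ is fixed while $\xi_{b}$ is random, so one really does need the \emph{uniform} statement that $\nu_{\PP}$-negligible sets are $\nu_{\PP}$-negligible for \emph{every} choice of first flag, which follows from the $G$-equivariance built into \Cref{crit}.
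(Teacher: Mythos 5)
Your proposal is correct and follows the same underlying idea as the paper: use the Bruhat decomposition to see that for fixed $\xi^-$ the set of $\xi$ with $(\xi^-,\xi)\notin \FF^+$ is a proper Zariski-closed subset of $\PP$, then apply the regularity of the stationary measure. The only difference is in how the regularity is invoked: you cite directly the fact, stated in \Cref{dynamicsP}, that $\nu_{\PP}$ gives no mass to proper subvarieties of the flag variety, whereas the paper deliberately avoids relying on that general statement and instead reduces to a projective space. Concretely, for each $w\neq e$ in the Weyl group the paper picks a representation $(V_\alpha,\rho_\alpha)$ from \Cref{Fact1} with $\rho_\alpha(w)\xi_\alpha\neq\xi_\alpha$, shows that the small Bruhat cell $g_1P^-w\xi_0$ projects via $\psi_\alpha$ into a proper projective subspace $\mathbb{P}(\rho_\alpha(g_1)V_\alpha^<)$, and applies \cite[Lemma 4.6]{BQRW}, which cleanly asserts that the stationary measure on $\mathbb{P}(V_\alpha)$ is non-degenerate on proper subspaces. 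This buys a self-contained argument resting only on the cited lemma for projective spaces, rather than on the broader assertion about subvarieties of $\PP$, which the paper never attaches a precise reference to. Your shortcut is logically valid given that assertion, but a referee inclined to trace every citation would prefer the paper's reduction; if you keep your version, you should supply an explicit reference for the non-degeneracy of $\nu_\PP$ on proper subvarieties of the flag variety.
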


\smallskip
\begin{proof}
Write $\xi^-=g_{1}\xi^-_{0}$ for some $g_{1}\in G$. According to \Cref{crit}, we need to check that for $\beta$-almost every $b\in B$, 
$$\xi_{b}\in g_{1}P^-\xi_{0}$$
Bruhat decomposition \cite[Theorem 5.15]{BT65} guarantees that $\PP = \sqcup_{w\in W} g_{1}P^{-}w\xi_{0}$
where $W$ stands for the Weyl group.  It is thus enough to show that for $w\in W\smallsetminus\{0\}$, we have $\nu_{\PP}(g_{1}P^{-}w\xi_{0})=0$.

 As $w\neq 0$, we have $w\xi_{0}\neq \xi_{0}$, so by \Cref{Fact1}, there exists $\alpha \in\Pi$ such that $\rho_{\alpha}(w)\xi_{\alpha}\neq \xi_{\alpha}$.   As $\rho_{\alpha}(w)$ permutes the weights of $V_{\alpha}$, it has to send $\xi_{\alpha}$ to $V^<_{\alpha}$ the unique $\mathfrak{a}$-invariant complementary subspace of  $\xi_{\alpha}$. But $V^<_{\alpha}$ is stable under $\rho_{\alpha}(P^-)$. To sum up, we have 
 $$\psi_{\alpha}(g_{1}P^-w \xi_{0})\subseteq \mathbb{P}(\rho_{\alpha}(g_{1})V^<_{\alpha})$$ where $\psi_{\alpha}$ is the $\alpha$-coordinate projection $\PP\rightarrow \mathbb{P}(V_{\alpha}), g\xi_{0}\mapsto \rho_{\alpha}(g)\xi_{\alpha}$.  
As the action of $G$ on  $V_{\alpha}$ is irreducible, the stationary measure $\nu_{\mathbb{P}(V_{\alpha})}=\psi_{\alpha \star}\nu_{\PP}$   gives no mass to proper projective spaces \cite[Lemma 4.6]{BQRW}.  In particular, the above  yields $\nu_{\PP}(g_{1}P^-w \xi_{0})=0$.  Finally $\nu_{\PP}(g_{1}P^-\xi_{0})=1$.

\end{proof}

\subsection{Distance formula} \label{sec-distform}

The goal of \Cref{sec-distform} is to prove \Cref{distform} which bounds the distance from $b_{1}\dots b_{n}$ to $k_{\infty}(b)a_{t_{n}(b)}$ by the distance from $b_{1}\dots b_{n}$ to a well-chosen maximal flat of $G/K$.  \Cref{distform} will  follow from  geometric (non-random) considerations encapsulated in \Cref{distform0}.  
We endow $G$ with a left $G$-invariant Riemannian metric, and denote by $d$ the distance induced on $G$. 
For $\xi\in \PP$, we  write $\xi=k_{\xi}\xi_{0}$ where $k_{\xi}\in K$. Recall  that for $x\in G$, we denote by $x=k_{x}a_{t_{x}}l_{x}$ where $k_{x},l_{x}\in K$, $t_{x}\in \ka^+$ a Cartan decomposition of $x$. The Cartan projection $t_{x}$ is uniquely defined, and $k_{x}, k_{\xi}$ are uniquely defined in $K/M$ (where $M=Z_{K}(\ka)$) as long as $t_{x}$ is in the interior of $\ka^+$ . 

\begin{prop.} \label{distform0}
  There exists  $C_{0}>0$ such that for  all $(\xi^-, \xi)\in \FF^+$, all sequence $(x_{n})\in G^\N$ such that  $\inf_{\alpha\in \Phi^+}\alpha (t_{x_{n}})\to +\infty$ and $k_{x_{n}} \to k_{\xi}$ in $K/M$, we have for $n$ large enough, 
$$d(x_{n}, k_{\xi}a_{t_{x_{n}}})  \leq C_{0}d(x_{n}, F(\xi^-, \xi))+C_{0}$$
\end{prop.}

\begin{cor.} \label{distform}
  There exists   $C_{0}>0$ such that for  all $\xi^-\in \PP^-$,  for $\beta$-almost every $b\in B$, for large enough $n\geq 0$, 
$$d(b_{1}\dots b_{n}, k_{\infty}(b)a_{t_{n}(b)})  \leq C_{0}d(b_{1}\dots b_{n}, F(\xi^-, \xi_{b}))+C_{0}$$
\end{cor.}

\bigskip
We first prove \Cref{distform0}. It relies on the following technical lemma.

\begin{lemme}\label{distform2}
There exists a constant $C_{1}>0$ such that for every $u\in \exp(\mathfrak{u})$, there exists a neighborhood $V_{K}\subseteq K$ of the neutral element $e$ in $K$ such that for all $s,t\in \mathfrak{a}, k\in V_{K}$ 
$$d(a_{s},a_{t})\leq C_{1}d(ua_{s}, ka_{t})+C_{1} $$
\end{lemme}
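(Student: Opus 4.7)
The plan is to translate both distances into Cartan projections and bound the latter via the proximal representations of Fact~\ref{Fact1}. First, by left-invariance of $d$,
$$d(ua_s,ka_t)=d(e,\,a_{-s}u^{-1}ka_t)=:d(e,g).$$
The restriction of $d$ to $A=\exp(\ka)$ is flat (since $A$ is abelian and the metric is left-invariant), so $d(a_s,a_t)=\|s-t\|$ for the induced Euclidean norm $\|\cdot\|$ on $\ka$. Compactness of $K$ combined with the Riemannian submersion $G\to G/K$ gives $\|\kappa(g)\|\leq d(e,g)\leq\|\kappa(g)\|+2\,\mathrm{diam}(K)$, where $\kappa:G\to\ka^+$ denotes the Cartan projection. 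Hence the lemma reduces to producing universal constants $C,C'$ such that
$$\|s-t\| \leq C\,\|\kappa(a_{-s}u^{-1}ka_t)\|+C'$$
once $V_K$ is chosen appropriately.

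Next I would bound each $\chi_\alpha(\kappa(g))$ separately using the representation $(V_\alpha,\rho_\alpha)$ from Fact~\ref{Fact1}. Since $\xi_\alpha$ is the highest weight line and $u^{-1}\in\exp(\mathfrak{u})$, we have $\rho_\alpha(u^{-1})\xi_\alpha=\xi_\alpha$, whence
$$\rho_\alpha(u^{-1}k)\xi_\alpha - \xi_\alpha = \rho_\alpha(u^{-1})\bigl(\rho_\alpha(k)\xi_\alpha-\xi_\alpha\bigr).$$
Choosing $V_K$ small enough so that $\|\rho_\alpha(k)\xi_\alpha-\xi_\alpha\|\leq 1/(2\|\rho_\alpha(u^{-1})\|)$ and the analogous bound for $k^{-1}$ hold for all $\alpha\in\Pi$ (this is what forces $V_K$ to depend on $u$), the highest-weight coefficient of $\rho_\alpha(u^{-1}k)\xi_\alpha$ satisfies $|c_{\chi_\alpha}|\geq 1/2$. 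Decomposing $\rho_\alpha(g)\xi_\alpha=e^{\chi_\alpha(t)}\rho_\alpha(a_{-s})\rho_\alpha(u^{-1}k)\xi_\alpha$ orthogonally along weight spaces and retaining only the $\chi_\alpha$-component yields
$$\|\rho_\alpha(g)\xi_\alpha\| \geq |c_{\chi_\alpha}|\,e^{\chi_\alpha(t-s)} \geq \tfrac12\,e^{\chi_\alpha(t-s)},$$
hence $\chi_\alpha(\kappa(g))=\log\|\rho_\alpha(g)\|\geq\chi_\alpha(t-s)-\log 2$.

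Running the same argument on $g^{-1}=a_{-t}k^{-1}u\,a_s$, where $\rho_\alpha(k^{-1}u)\xi_\alpha=\rho_\alpha(k^{-1})\xi_\alpha$ is controlled by the same $V_K$, yields $\chi_\alpha(\kappa(g^{-1}))\geq\chi_\alpha(s-t)-\log 2$. The identity $\kappa(g^{-1})=\iota(\kappa(g))$, where $\iota=-w_0$ is the opposition involution of $\ka$, permutes the fundamental weights via some bijection $\sigma$ of $\Pi$, so the inequality rewrites as $\chi_{\sigma(\alpha)}(\kappa(g))\geq\chi_\alpha(s-t)-\log 2$. Combining both bounds gives
$$|\chi_\alpha(t-s)| \leq \max\bigl(\chi_\alpha(\kappa(g)),\,\chi_{\sigma(\alpha)}(\kappa(g))\bigr)+\log 2 \leq C_G\,\|\kappa(g)\|+\log 2$$
for each $\alpha\in\Pi$, with $C_G$ depending only on $G$. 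Since $(\chi_\alpha)_{\alpha\in\Pi}$ is a basis of $\ka^*$, this controls $\|s-t\|$ by a universal linear expression in $\|\kappa(g)\|$, which in turn concludes the proof after reinserting the estimates of the first paragraph.

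The hardest part will be ensuring the constant $C_1$ is truly independent of $u$; this works because the only $u$-sensitive ingredient is the size of the neighborhood $V_K$, whereas the resulting lower bound $|c_{\chi_\alpha}|\geq 1/2$ and the consequent additive error $\log 2$ are universal, all other constants coming from the root data of $G$ alone.
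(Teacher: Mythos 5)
Your proof is correct and follows essentially the same route as the paper: reduce via the left-invariant Killing-form metric to a bound of the form $\|s-t\| \leq C\,\|\kappa(a_{-s}u^{\pm 1}ka_t)\|+C'$, then control each $\chi_\alpha(\kappa(\cdot))$ using the proximal representations of Fact~\ref{Fact1} together with the invariance of the highest-weight line under $\exp(\mathfrak{u})$, apply the same estimate to the inverse element, and conclude since $(\chi_\alpha)_{\alpha\in\Pi}$ spans $\ka^*$. The only cosmetic difference is that you track the opposition involution $\iota$ and its induced permutation of $\Pi$ explicitly when handling $g^{-1}$, whereas the paper simply invokes $\|\kappa(g)\|=\|\kappa(g^{-1})\|$; both give the same two-sided bound.
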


\bigskip
Let us see first how to deduce \Cref{distform0} from  \Cref{distform2}.

\begin{proof}[Proof of  \Cref{distform0}]
Let $(\xi^-, \xi)\in \FF^+$ and $(x_{n})$ as in  \Cref{distform0}.  Using \Cref{param-flat}, $$F(\xi^-, \xi)= k_{\xi}F( k_{\xi}^{-1}\xi^-, \xi_{0})= k_{\xi}u_{\xi^-}\exp(\mathfrak{a})K$$
for some element $u_{\xi^-}\in \exp(\mathfrak{u})$. The assumption $\inf_{\alpha\in \Phi^+}\alpha (t_{x_{n}})\to +\infty$ implies that $a_{-t_{x_{n}}}u_{\xi^-}a_{t_{x_{n}}}\rightarrow 0$ as $n$ goes to infinity. Hence we can write for $n\geq 0$, $s\in \mathfrak{a}$, 
\begin{align*}
d(x_{n}, k_{\xi} a_{t_{x_{n}}})&=d(x_{n}, k_{\xi} u_{\xi^-}a_{t_{x_{n}}})+o(1)\\
&\leq d(x_{n}, k_{\xi} u_{\xi^-} a_{s})+ d(a_{s},a_{t_{x_{n}}})+o(1)
\end{align*}
Using the assumption that $k_{x_{n}}\rightarrow k_{\xi}$ in $K/M$ and \Cref{distform2}, we have for large $n\geq $0, every $s\in \mathfrak{a}$, 
\begin{align*}
d(a_{s},a_{t_{x_{n}}})&\leq C_{1}d(k_{\xi}^{-1}k_{x_{n}} a_{t_{x_{n}}},  u_{\xi^-}a_{s})+C_{1}\\
&= C_{1}d(k_{x_{n}}a_{t_{x_{n}}},  k_{\xi}u_{\xi^-} a_{s})+C_{1}\\
&\leq C_{1}d(x_{n},  k_{\xi}u_{\xi^-}a_{s})+C_{2}\\
\end{align*}
where $C_{2}=C_{1}\text{diam}K+C_{1}$, which leads to 
$$d(x_{n}, k_{\xi} a_{t_{x_{n}}}) \leq (1+C_{1})d(x_{n},  k_{\xi}u_{\xi^-}a_{s})+ C_{2}+o(1)$$
Choosing $s$ to realise the infimum, we obtain  for large enough $n\geq0$,
$$d(x_{n}, k_{\xi} a_{t_{x_{n}}}) \leq (1+C_{1})d(x_{n}, F(\xi^-, \xi))+C_{2}+1$$
which concludes the proof.
\end{proof}

\bigskip
We now need to show \Cref{distform2}. 

\begin{proof}[Proof of \Cref{distform2}]
Notice first that if $d_{1}, d_{2}$ are the distances induced on $G$ by two left $G$-invariant Riemannian metrics, then there exists a constant $R>0$ such that
$$\frac{1}{R}d_{2}\leq d_{1}\leq R\,d_{2} $$ 
Hence, in order to prove \Cref{distform2}, we can specify $d$ as follows.  Let $\mathfrak{s}=\mathfrak{k}^\perp$ be  the orthogonal of $\mathfrak{k}$ for  the Killing form $\mathcal{K}: \mathfrak{g}\times \mathfrak{g}\rightarrow \R$.  We know by \cite{BQRW} that $\mathcal{K}$ is negative definite on $\mathfrak{k}$ and positive definite on $\mathfrak{s}$. In particular we have a decomposition $ \mathfrak{g}=  \mathfrak{k}\oplus  \mathfrak{s}$ and we may define a saclar product on $\mathfrak{g}$ by setting $\langle .,.\rangle=-\mathcal{K}(\theta.,.)$ where $\theta=\text{Id}_{\mathfrak{k}}\oplus -\text{Id}_{\mathfrak{s}}$ is the opposition involution map. We endow $G$ with left $G$-invariant metric that coincides with $\langle .,.\rangle$ on $\mathfrak{g}$ and write $d$ the corresponding distance map. 

It is then a standard exercise to check that for $s\in \mathfrak{a}$, 
$$d(e, a_{s})=||s||$$
where $||.||$ is the euclidean norm associated to $\langle .,.\rangle$. In particular, for any $g,h\in G$, 
$$ ||\kappa(g^{-1}h)||-2 \text{diam} K\leq  d(g,h)\leq ||\kappa(g^{-1}h)||+2 \text{diam} K $$ 
where $\kappa : G\rightarrow \mathfrak{a}^+$ denotes the Cartan projection map. 

\bigskip
This inequality means that we may reformulate \Cref{distform1} as follows

\begin{distform2bis}\label{3}There exists a constant $C_{2}>0$ such that for every $u\in \exp(\mathfrak{u})$, there exists a neighborhood $V_{K}\subseteq K$ of the neutral element $e$ in $K$ such that for all $s,t\in \mathfrak{a}$, $k\in V_{K}$ 
$$||s-t|| \leq C_{2}||\kappa(a_{-s}uka_{t})||+C_{2}$$
\end{distform2bis}

\begin{proof}[Proof of Lemma \ref{distform2} bis]
We use the representations $(V_{\alpha}, \rho_{\alpha})_{\alpha\in \Pi}$ introduced in \Cref{Fact1}. For $\alpha\in \Pi$, let  $v_{\alpha}\in  \xi_{\alpha}$ be a vector in the line of heighest weight of $V_{\alpha}$ such that $||v_{\alpha}||=1$. Recall from \cite[Section 6.8]{BQRW} that $\rho_{\alpha}(u)(v_{\alpha})=v_{\alpha}$. In particular, there exists a neighborhood $V_{K,\alpha}\subseteq K$ of $e$ in $K$ such that for all $k\in V_{K,\alpha}$, we have $\rho_{\alpha}(uk)v_{\alpha}=v'_{\alpha}+v''_{\alpha}$ with $v'_{\alpha}\in \xi_{\alpha}$, $v''_{\alpha}\in \xi^\perp_{\alpha}$ and $||v'_{\alpha}||\geq \frac{1}{2}$. Let $s,t\in \mathfrak{a}$ and $k\in V_{K,\alpha}$.
$$||\rho_{\alpha}(a_{-s}uka_{t}v_{\alpha})||\geq \frac{1}{2}e^{\chi_{\alpha}(t-s)} $$ 
which leads to 
$$\chi_{\alpha}(t-s)\leq  \log  ||\rho_{\alpha}(a_{-s}uka_{t})||+  \log 2$$
However, $\log ||\rho_{\alpha}(a_{-s}uka_{t})||= \chi_{\alpha}(\kappa(a_{-s}uka_{t}))\leq ||\chi_{\alpha}||\, ||\kappa(a_{-s}uka_{t})|| $,
so $$\chi_{\alpha}(t-s)\leq   ||\chi_{\alpha}||\, ||\kappa(a_{-s}uka_{t})|| +\log 2$$
As $\kappa(g)=\kappa(g^{-1})$, we can apply the previous argument  to $(a_{-s}uka_{t})^{-1}$ to strengthen the previous inequality  and get for  $s,t\in \mathfrak{a}$, $k\in V'_{K,\alpha}$ neighborhood of $e$ in $K$, 
 $$|\chi_{\alpha}(t-s)|\leq ||\chi_{\alpha}||\, ||\kappa(a_{-s}uka_{t})|| + \log 2 $$
Now assuming $k\in V_{K}=\cap_{\alpha\in \Pi}V'_{K,\alpha}$ and summing over $\alpha\in \Pi$, 
 \begin{align} \label{eq2}
 \sum_{\alpha\in \Pi}|\chi_{\alpha}(t-s)|\leq (\sum_{\alpha\in \Pi}||\chi_{\alpha}||)\, ||\kappa(a_{-s}uka_{t})|| + \sharp \Pi \log 2 
 \end{align}
As the weights $(\chi_{\alpha})_{\alpha\in \Pi}$ form a basis of $\mathfrak{a}^\star$, there exists a constant $C>0$, depending only on  $(\chi_{\alpha})_{\alpha\in \Pi}$ and $||.||$, such that  
 \begin{align}\label{eq3}
 ||t-s||\leq C\sum_{\alpha\in \Pi}|\chi_{\alpha}(t-s)| 
  \end{align}
  Inequalities (\ref{eq2}) and  (\ref{eq3})  together prove Lemma \ref{distform2} bis. 
  
\end{proof}
\renewcommand{\qedsymbol}{}
\end{proof}

We now turn to the proof of \Cref{distform}. In order to apply \Cref{distform0}, we prove  

\begin{lemme}\label{distform1}
For $\beta$-almost every $b\in B$, for every $\alpha\in \Phi^+$, 
$$\alpha(t_{n}(b))\underset{n\to +\infty}{\longrightarrow} +\infty$$
\end{lemme}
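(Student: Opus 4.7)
The plan is to exploit the proximal representations $(V_\alpha,\rho_\alpha)_{\alpha\in\Pi}$ of \Cref{Fact1} together with the rank-one convergence of the normalised products already invoked in the proof of \Cref{lem-F1}, in order to pass from the abstract proximality of the walk on each $\mathbb{P}(V_\alpha)$ to a quantitative lower bound for $\alpha(t_n(b))$.

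First I would reduce to the case of a simple root. Every positive root $\alpha\in\Phi^+$ can be written $\alpha=\sum_{\beta\in\Pi}n_\beta\beta$ with non-negative integer coefficients, at least one of which is positive. Since $t_n(b)\in\mathfrak{a}^+$, all $\beta(t_n(b))$ for $\beta\in\Pi$ are non-negative, so
\[
\alpha(t_n(b))\;\geq\;n_{\beta_0}\,\beta_0(t_n(b))
\]
for any simple root $\beta_0$ with $n_{\beta_0}>0$. Hence it suffices to show the divergence for $\alpha$ ranging over the simple roots $\Pi$.

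Now fix $\alpha\in\Pi$ and consider the proximal irreducible representation $(V_\alpha,\rho_\alpha)$ from \Cref{Fact1}. With the chosen inner product, $\rho_\alpha(a_t)$ is self-adjoint with eigenvalues $e^{\chi(t)}$ over the weights $\chi$ of $V_\alpha$, so the singular values of $\rho_\alpha(g)$ are $\{e^{\chi(\kappa(g))}\}$. Writing $\lambda_1(g)\geq\lambda_2(g)$ for the top two singular values of $\rho_\alpha(g)$, the highest weight $\chi_\alpha$ has multiplicity one, so $\lambda_1(g)=e^{\chi_\alpha(\kappa(g))}$. Because $\chi_\alpha-\alpha$ is also a weight (\Cref{Fact1}), we have $\lambda_2(g)\geq e^{(\chi_\alpha-\alpha)(\kappa(g))}$, and therefore
\[
\alpha(\kappa(g))\;\geq\;\log\!\bigl(\lambda_1(g)/\lambda_2(g)\bigr).
\]

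It then remains to show that $\lambda_1(\rho_\alpha(b_1\cdots b_n))/\lambda_2(\rho_\alpha(b_1\cdots b_n))\to+\infty$ almost surely. But \cite[Proposition 4.7]{BQRW}, already used in the proof of \Cref{lem-F1}, asserts that for $\beta$-almost every $b$ any accumulation point of $\rho_\alpha(b_1\cdots b_n)/\|\rho_\alpha(b_1\cdots b_n)\|$ in $\mathrm{End}(V_\alpha)$ is of rank one. Since $\|\rho_\alpha(b_1\cdots b_n)\|=\lambda_1(\rho_\alpha(b_1\cdots b_n))$, this rank-one convergence is equivalent to $\lambda_2/\lambda_1\to 0$, hence $\log(\lambda_1/\lambda_2)\to+\infty$, and combining with the displayed inequality applied to $g=b_1\cdots b_n$ yields $\alpha(t_n(b))\to+\infty$ almost surely, for each $\alpha\in\Pi$. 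Taking the countable intersection over $\Pi$ and using the reduction above concludes the proof.

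The main (very mild) obstacle is checking that the second singular value of $\rho_\alpha(g)$ is genuinely bounded below by $e^{(\chi_\alpha-\alpha)(\kappa(g))}$; this uses that $\chi_\alpha$ has multiplicity one and that $\rho_\alpha(a_t)$ is self-adjoint for the $K$-invariant inner product chosen after \Cref{Fact1}, so that one really reads off singular values from weights. Everything else is a direct application of results already established or imported from \cite{BQRW}, and crucially no moment assumption on $\mu$ is used.
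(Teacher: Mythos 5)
Your argument is correct and is essentially the same as the paper's: both reduce to simple roots, both invoke the rank-one accumulation of $\rho_\alpha(b_1\cdots b_n)/\|\rho_\alpha(b_1\cdots b_n)\|$ from \cite[Proposition 4.7]{BQRW}, and both extract $e^{-\alpha(t_n(b))}\to 0$ from the presence of the weight $\chi_\alpha-\alpha$ in $V_\alpha$. The only presentational difference is that you phrase the key step as the singular-value gap $\lambda_2/\lambda_1\to 0$, whereas the paper evaluates the normalised operator $\rho_\alpha(a_{t_n(b)})/\|\rho_\alpha(a_{t_n(b)})\|$ on a weight vector $w_\alpha$ for $\chi_\alpha-\alpha$ and observes that the result equals $e^{-\alpha(t_n(b))}w_\alpha$ exactly; these are two wordings of the identical computation.
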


\begin{proof}[Proof of \Cref{distform1}]
We only need to show that for every $\alpha\in \Pi$, $\beta$-almost every $b\in B$, 
$$\alpha(t_{n}(b))\underset{n\to +\infty}{\longrightarrow} +\infty$$
Consider again the representations $(V_{\alpha}, \rho_{\alpha})$ introduced in \Cref{Fact1}.  Arguing as in \Cref{lem-F1}, we see that $\frac{\rho_{\alpha}(a_{t_{n}(b)})}{||\rho_{\alpha}(a_{t_{n}(b)})||}$ converges to the orthogonal projection on the line of heighest weight $\xi_{\alpha}$ in $V_{\alpha}$.  In particular, given a vector $w_{\alpha}$ in  the weight space of $\chi_{\alpha}-\alpha$, we get $\frac{\rho_{\alpha}(a_{t_{n}(b)})}{||\rho_{\alpha}(a_{t_{n}(b)})||}(w_{\alpha})\rightarrow 0$.  Noticing that $ ||\rho_{\alpha}(a_{t_{n}(b)})||= e^{\chi_{\alpha}(t_{n}(b))}$, the latter can be rewritten as  
$$e^{-\alpha(t_{n}(b))} \underset{n\to +\infty}{\longrightarrow} 0$$ 
which concludes the proof.
\end{proof}

\bigskip
\begin{proof}[Proof of \Cref{distform}]
It follows from the combination of \Cref{distform0} and Lemmas \ref{lem-xib}, \ref{distform1}, \ref{lem-F1}.

\end{proof}

\subsection{Proof of  \Cref{th-traj-geod} } \label{sec-proof-traj-geod}

We  conclude \Cref{Sec-traj-geod} with the proof of \Cref{th-traj-geod}. We actually show the following more detailed version. 

\bigskip
 \begin{th-traj-geod} \label{ThAbis}  Keep the notations of \Cref{sec-not/strat}.  In particular, $G$ is a connected semisimple  real Lie group with finite center,  $\mu$  a probability measure on $G$ with  $\Gamma_{\mu}$ Zariski-dense in $G$,  set $B=G^{\N^\star}$, $\beta=\mu^{\otimes \N^\star}$ and for $\beta$-almost every $b\in B$,  choose a Cartan decomposition $b_{1}\dots b_{n}=k_{n}(b)a_{t_{n}(b)}l_{n}(b)$ with $k_{n}(b)$ converging in $K$, and let $k_{\infty}(b)=\lim k_{n}(b)$.
 
 Then for all $\varepsilon>0$, there exists a constant $R>0$ such that for $\beta$-almost every $b\in B$, 
$$\liminf_{n\to +\infty} \frac{1}{n}\sharp\{i\in \llbracket 1, n\rrbracket,\,\, d(b_{1}\dots b_{i}, k_{\infty}(b)a_{t_{i}(b)})\leq R\} >1-\varepsilon $$
 \end{th-traj-geod}

\bigskip 
\begin{proof}
In view of \Cref{distform}, it is enough to prove that for all $\varepsilon>0$, there exists a constant $R>0$ and an element $\xi^-\in \PP^-$, such that for  $\beta$-almost every $b\in B$, 
\begin{align*}
\liminf_{n\to +\infty} \frac{1}{n}\sharp\{i\in \llbracket 1, n\rrbracket,\,\, d(b_{1}\dots b_{i}, F(\xi^-, \xi_{b}))\leq R\} >1-\varepsilon 
\end{align*}
The  observation  that 
\begin{align}
d(b_{1}\dots b_{i}, F(\xi^-, \xi_{b}))=d(e, F(b^{-1}_{i}\dots b^{-1}_{1}\xi^-, \xi_{T^ib})) \label{eq4} 
\end{align}
 where $T : B \rightarrow B, b=(b_{i})_{i\geq 1}\mapsto (b_{i+1})_{i\geq 1}$ is the one-sided shift, motivates the following. 

 Let $\widecheck{\mu}$ be the image of $\mu$ under the inversion map $g\mapsto g^{-1}$, and  $\nu_{\PP^-}$  the $\widecheck{\mu}$-stationary probability measure on $\PP^-$. Define $T^+ :B\times \PP^-\rightarrow B\times \PP^-, (b,\xi^-)\mapsto (Tb, b^{-1}_{1}\xi^-)$.  The  ergodicity of the $\widecheck{\mu}$-walk on $\PP^-$ is a consequence of \cite[Proposition 4.7]{BQRW} and means that  the dynamical system $(B\times \PP^-, \beta\otimes \nu_{\PP^-}, T^+)$ is measure-preserving and ergodic \cite[Proposition 2.14]{BQRW} . Define (almost everywhere) a function $f :  B\times \PP^-\rightarrow [0,+\infty[$ setting
$$
f(b, \xi^-)  = \left\{
    \begin{array}{ll}
        1 & \mbox{if } d(e, F(\xi^-, \xi_{b})) \leq R \\
        0 & \mbox{otherwise}
    \end{array}
\right.
$$
Notice that $f$ is measurable\footnote{To check this, observe that the map $\phi : \FF^+\rightarrow \R^+, (g\xi^-_{0},g\xi_{0})\mapsto d(e, F(g\xi^-_{0},g\xi_{0}) )= d(g^{-1}, F(\xi^-_{0},\xi_{0}) )$ is continuous, hence its extension to $\PP^-\times \PP$ by setting $\phi=+\infty$ on the (closed) complement $\PP^-\times \PP\smallsetminus \FF^+$ is measurable. Now the measurability of $f$ follows from the measurability of $B\rightarrow \PP, b\mapsto \xi_{b}$.}
and that we may choose $R>0$ large enough so that $\beta\otimes \nu_{\PP^-}(f)>1-\varepsilon$. In this case,  Birkhoff Ergodic Theorem implies that for $\nu_{\PP^-}$-almost every $\xi^-\in \PP^-$, $\beta$-almost every $b\in B$, large enough $n\geq0$,
\begin{align*}
\frac{1}{n}\sum^{n}_{i=1}f \circ (T^+)^i(b, \xi^-)>1-\varepsilon \end{align*}
which can be rewritten as
\begin{align*}
 \frac{1}{n}\sharp\{i\in \llbracket 1, n\rrbracket,\,\, d(b_{1}\dots b_{i}, F(\xi^-, \xi_{b}))\leq R\} >1-\varepsilon 
\end{align*}
This concludes the proof. 
\end{proof}

\bigskip

\bigskip

\section{Recurrence criterion} \label{rec-crit}

The goal of this section is to prove  our second theorem announced in the introduction. 

\bigskip
\begin{Th-rec/trans}
Let $G$ be a  connected simple real Lie group of rank one, $\Lambda\subseteq G$ a discrete subgroup, $X=\Lambda\backslash G$. Let $\mu$ be a probability measure on $G$ with a finite first moment and $\Gamma_{\mu}$ Zariski-dense in $G$. 

 Then the $\mu$-walk and  the geodesic flow  on $X$ are either both recurrent  ergodic, or both transient with  locally integrable Green functions. 
\end{Th-rec/trans}

\bigskip
We will use freely the notations of \ref{sec-not/strat} and always be in the setting of \Cref{th-rec/trans}. In particular, $\mathfrak{a}$ denotes a Cartan subspace of dimension $1$, that we will identify with $\R$ via the linear isomorphism sending $1\in \R$ to the element $v_{0}\in \mathfrak{a}^+$  of norm $1$ for the Killing form.  In this regard, for $t\in \R$, we have by definition $a_{t}=\exp(t v_{0})\in G$.

As we shall explain in  \Cref{Hopfdich}, the dichotomy presented in  \Cref{th-rec/trans} is already known for the geodesic flow  :

\smallskip
\noindent{\bf Fact 2.}
\emph{The geodesic flow $(a_t)_{t\in \R}$ on $X$ is  either recurrent  ergodic, or  transient with  locally integrable Green functions.}

\smallskip

 Hence  \Cref{th-rec/trans}  is equivalent to the following propositions that we will prove independently in the next sections. 

\smallskip
\begin{prop.} \label{th-rec}
If the geodesic flow on $X$ is recurrent and ergodic, then it is also the case of the $\mu$-walk on $X$.

\end{prop.}

\smallskip

Denote $\mathscr{P}^C(X)$  the collection of subsets $F\subseteq X$ such that $F$ is relatively compact and has positive Haar measure. 

\begin{prop.} \label{th-trans} 
 If  the Green functions of the geodesic flow $G(.,F)_{F\in \mathscr{P}^C(X) }$ are  locally integrable, then it is also the case of   the Green functions $G_{\mu}(.,F)_{F\in \mathscr{P}^C(X) }$ of the $\mu$-walk on $X$. 
 \end{prop.}

For the proofs to come, it will be useful to embed $G$ in a linear group $SL(V_{0})$ via a faithful irreducible proximal algebraic representation.   
$V_{0}$ will  be endowed with a $K$-invariant scalar product $\langle.,. \rangle_{0}$ such that $\mathfrak{a}$ is self-adjoint \cite[Lemma 6.33]{BQRW}. 

\subsection{Renewal theory}

It happens that both Propositions \ref{th-rec} and \ref{th-trans} rely on  renewal results for the Cartan projection of the right random walk on $G$.  The role of this section is to state and prove these results.

We first give some context. A renewal theorem considers a transient random walk and estimates the average time spent in a given bounded subset when the latter degenerates.  The standard case of a non-arithmetic walk on $\R$ can be found in \cite{lpsm-cours}. It was generalized by Kesten in \cite{Kes} to the Iwasawa cocycle for linear random walks.

\begin{th.*}[Renewal Theorem for the Iwasawa cocycle,  \cite{Kes}]
Let $d\geq2$ and $m$ be a probability measure on $SL_{d}(\R)$ with a finite first moment and such that $\Gamma_{m}:=\langle \text{supp\,} m \rangle$ is strongly irreducible and unbounded. Denote by $\lambda_{m}>0$  the first Lyapunov exponent of $m$ and by $(S_{n})_{n\geq 0}$ the left $\mu$-random walk on $SL_{d}(\R)$ starting at $\emph{Id}$. Then, for any interval $I\subseteq \R$ and vector $v\in \R^d\smallsetminus\{0\}$, 
$$\mathbb{E}(\sharp\{n\geq 0, \,\log ||S_{n}v|| \in I+t))\, \underset{t\to +\infty}{\longrightarrow}\,\frac{\emph{leb}(I)}{\lambda_{m}}$$

\end{th.*}




\noindent{\bf Remark}. A more precise statement is proven in \cite{Gui-Lep}, and the speed of convergence is estimated in \cite{Li} under the assumption that $m$ has an exponential moment.

\bigskip

Now consider the $\mu$-walk on our rank one simple Lie group $G$. The assumption that $\mu$ has a finite first moment implies that   for $\beta$-almost every $b\in B$, 
$$t_{n}(b)\underset{n\to +\infty}{\sim} n \lambda_{\mu} $$ 
where $\lambda_{\mu}>0$ is  the first \emph{ Lyapunov exponent} of $\mu$ (see \cite{BQRW}).  In view of the above renewal theorem, it is natural to conjecture the following renewal statement for the Cartan projection :  
\begin{align}\label{eq5}
\mathbb{E}_{\beta}(\sharp \{n \geq 0, \,t_{n}\in I +t\}) \, \underset{n\to +\infty}{\longrightarrow} \,\frac{\text{leb}(I)}{\lambda_{\mu}} 
\end{align}
This is known to be true if $\mu$ has a finite exponential moment \cite{Li} but the case where $\mu$ has only a finite first moment is still open. We prove two  propositions (\ref{ren-lem1},\ref{ren-lem2}) that can be seen as first steps to show  the convergence (\ref{eq5}).  

\bigskip

The first proposition guarantees that for any  point $x\in X$ which has a recurrent  orbit $(xa_{t})_{t>0}$ under the geodesic flow,  the sequence $(x a_{t_{n}(b)})_{n\geq 0}$ is also recurrent for $\beta$-almost every $b\in B$ (see \Cref{rec-tn}).

\begin{prop.}\label{ren-lem1}
Let $I\subseteq \R$ be a large enough bounded interval.  For any subset $S\subseteq \R_{+}$  containing arbitrary large real numbers, 
 for $\beta$-almost every $b\in B$, 
$$\sharp \{ n\geq 0, \,\,t_{n}(b)\in I+S\} =+\infty$$

\end{prop.}

\bigskip
The proof \Cref{ren-lem1} relies on \Cref{ren-prelem1}, according to which the probability that the Cartan projection of a $\mu$-trajectory $(gb_{1}\dots b_{n})$ meets the translate $I+s$ is close to 1 as long as $s$ is large enough. 
Recall from \ref{sec-not/strat} that $t_{g}\geq 0$ denotes the Cartan projection of an element $g\in G$. 

\begin{lemme} \label{ren-prelem1}
Let $\varepsilon>0$ and $I\subseteq \R$   a large enough bounded interval. Then for every $g\in G$,
$$\liminf_{s\to +\infty} \beta \{b\in B,\,  \exists n\geq 0,\, t_{gb_{1}\dots b_{n}} \in  I+s \}>1-\varepsilon $$

\end{lemme}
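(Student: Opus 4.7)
The plan is to recast the lemma as an overshoot estimate for the first passage of the Cartan projection above level $s$. By the Law of Large Numbers for Zariski-dense walks with finite first moment \cite{BQRW}, the Lyapunov exponent $\lambda_{\mu}$ is positive and $t_{b_1\cdots b_n}/n\to \lambda_{\mu}$ $\beta$-almost surely. Since the Cartan projection is subadditive, one has $|t_{gh}-t_h|\le t_g$ for every $g,h\in G$, so the same convergence holds for the shifted walk $t_{gb_1\cdots b_n}/n\to\lambda_{\mu}$; in particular $t_{gb_1\cdots b_n}\to +\infty$, and the first-passage time
\[
T_s(b):=\inf\{n\ge 0:\, t_{gb_1\cdots b_n}\ge s\}
\]
is $\beta$-almost surely finite. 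Applying subadditivity again at time $T_s$ yields $s\le t_{gb_1\cdots b_{T_s}}<s+t_{b_{T_s}}$, so for any interval $I\supseteq [0,L]$ the event $\{t_{b_{T_s}}\le L\}$ is contained in $\{\exists n,\,t_{gb_1\cdots b_n}\in I+s\}$. Thus the lemma reduces to the overshoot estimate
\[
\limsup_{s\to +\infty}\beta\bigl\{b:\, t_{b_{T_s(b)}}>L\bigr\}\le \varepsilon,
\]
for $L=L(\varepsilon)$ sufficiently large, uniformly in $g$.

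The intuition for this overshoot bound comes from classical i.i.d.\ renewal theory: for a random walk on $\R$ with positive mean and $L^1$ increments, the overshoot at a high level converges in distribution to the size-biased law $\mathbb{P}(J>\cdot)/\mathbb{E}[J]$, whose tail at $L$ tends to $0$ as $L\to +\infty$ by integrability of $J$. In our setting $t_{gb_1\cdots b_n}$ is not an i.i.d.\ sum, but its jumps are dominated by the i.i.d.\ sequence $t_{b_n}$, which is $L^1$ by hypothesis. To transfer the renewal intuition rigorously, I would pass to the log-norm (Iwasawa) cocycle $\log\|\rho_\alpha(b_1\cdots b_n) v\|$ on a proximal irreducible representation $V_\alpha$ from \Cref{Fact1}; a single $\alpha$ suffices in rank one, and by the proximality of the walk on $\mathbb{P}(V_\alpha)$ recalled in \Cref{dynamicsP}, this cocycle agrees with a linear multiple of $t_{b_1\cdots b_n}$ up to a bounded error for $v$ chosen away from a bad projective hyperplane. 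Kesten's renewal theorem stated just above the lemma then applies to the cocycle and produces the required overshoot control, which I transfer back to $t_n$ via the bounded comparison.

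The main obstacle is precisely this renewal-type overshoot estimate, which has to be established with only a finite first moment assumption (no exponential moment, hence no quantitative speed of convergence is available). A secondary difficulty is uniformity in the starting point $g$, which I would handle by noting that a change of base point shifts $t_0$ by at most $t_g$, so for $s$ sufficiently large the walk has long exited the transient initial regime and the stationary overshoot estimate coming from Kesten's theorem controls everything. Granted the overshoot bound, the lemma follows from the inclusion $\{T_s<+\infty\}\cap\{t_{b_{T_s}}\le L\}\subseteq \{\exists n,\,t_{gb_1\cdots b_n}\in I+s\}$ together with the fact that both events on the left have $\beta$-probability arbitrarily close to $1$ once $s$ and $L$ are large.
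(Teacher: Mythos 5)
Your high-level plan matches the paper's strategy (Kesten plus an Iwasawa--Cartan comparison), and the reduction through the first-passage time $T_s$ and subadditivity is a valid reframing, but two of the pivotal steps contain genuine gaps. The overshoot estimate $\limsup_{s}\beta\{t_{b_{T_s}}>L\}\le\varepsilon$ does not follow from domination of increments by the i.i.d.\ sequence $(t_{b_n})$: the process $(t_{gb_1\cdots b_n})_n$ is not a renewal process, its increments can be tiny or negative while $t_{b_n}$ is huge, so a crossing of level $s$ could in principle be driven entirely by rare large jumps, and the classical size-biased overshoot law does not transfer. Moreover the renewal theorem you invoke---the expected-hitting asymptotic $\mathbb{E}(\#\{n:\log\|S_n v\|\in I+t\})\to \mathrm{leb}(I)/\lambda_{\mu}$ reproduced just above the lemma---is not the right tool: a first-moment asymptotic does not by itself control the hitting probability $\beta\{\exists n,\ \log\|S_n v\|\in I+t\}$. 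The paper instead cites a distinct first-passage estimate from Kesten's paper, his equation (1.17), which says precisely that for each $\varepsilon$ there is a bounded $J$ with $\beta\{\exists n,\ \log\|S_n(b,g)v\|\in J+s\}>1-\varepsilon/2$ for large $s$; that, not the expectation version, is what your plan needs.

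The second gap is the transfer back to the Cartan projection. There is no deterministic bounded comparison $\log\|S_n(b,g)\|=\log\|S_n(b,g)v\|+O(1)$ holding for all $b,n$ once $v$ avoids a hyperplane. The comparison is probabilistic: the paper derives from Benoist--Quint (Corollary 4.8 of their book) that for each $\varepsilon>0$ there is $R>0$ such that for every unit vector $v$, $\beta\{\forall n,\ \log\|{^tb_n}\cdots{^tb_1}\|-\log\|{^tb_n}\cdots{^tb_1}v\|<R\}>1-\varepsilon/2$, then chooses $v=v_g$ adapted to $g$ with $\|{^tg}v_g\|\geq\tfrac12\|{^tg}\|$ and enlarges $J$ by $R+\log 2$. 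The choice of $v$ away from a hyperplane alone does not yield boundedness; it is the quantitative contraction of the walk on $\mathbb{P}(V_0)$, valid on an event of probability close to $1$, that does. Once you replace your two ``transfer'' steps by Kesten's (1.17) and this probabilistic Iwasawa--Cartan comparison, the first-passage bound for the cocycle, intersected with the bounded-gap event, already yields the lemma, and the detour through $T_s$ and $t_{b_{T_s}}$ becomes unnecessary---which is exactly how the paper's proof proceeds.
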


\smallskip
\begin{proof}[Proof of \Cref{ren-prelem1}]
We use the represention $G\subseteq SL(V_{0})$ introduced earlier in \Cref{rec-crit}. Our assumptions on the scalar product $\langle.,.\rangle_{0}$ of $V_{0}$  imply that the product $gb_{1}\dots b_{n}$ and its adjoint  $$S_{n}(b,g)= {^tb_{n}}\dots {^t b_{1}} {^tg}$$ have the same Cartan projection $t_{gb_{1}\dots b_{n}}$. Hence  the norm of  $S_{n}(b,g)$ seen as an operator on $V_{0}$ is of the form $||S_{n}(b,g)||=e^{c_{0}t_{gb_{1}\dots b_{n}}}$ where $c_{0}=\log||a_{1}||>0$.   \Cref{ren-prelem1} can then be restated as :  for every $\varepsilon>0$, there exists $I\subseteq \R$ bounded interval such that for all $g\in G$, 
\begin{align} \label{eq6}
\liminf_{s\to +\infty} \beta\{b\in B, \exists n\geq 0,\, \log||S_{n}(b,g)|| \in  I+s \}>1-\varepsilon 
\end{align}

We know by \cite[(1.17)]{Kes}  this statement is true for the Iwasawa cocycle :  there exists a bounded interval $J$ such that for all  $g\in G$, $v\in V\smallsetminus\{0\}$, for all $s>s_{g,v}$,   
 \begin{align}\label{eq7}
\beta \{b\in B, \exists n\geq 0,\, \log||S_{n}(b,g) v|| \in  J+s \}>1-\varepsilon/2
\end{align}

Moreover, arguing by contradiction, we can infer from \cite[Corollary 4.8]{BQRW} that the difference between the Iwasawa cocycle and the Cartan projection  is ultimately bounded : there exists constants $R>0$ such that for all unit vector $v\in V_{0}$
 \begin{align*}
\beta\{b\in B,  \forall n\geq 0, \, 
 \log||{^tb_{n}}\dots {^t b_{1}}||- \log||{^tb_{n}}\dots {^t b_{1}} v||  < R \} > 1-\varepsilon/2 
\end{align*}
In particular, choosing for each $g\in G$  a  unit vector $v_{g}\in V_{0}$ such that  $||{^tg}v_{g}||\geq \frac{1}{2}||{^tg}||$, and setting $R' = R+\log 2$,
\begin{align} \label{eq8}
\beta\{b\in B, \forall n\geq 0, \, 
 \log||S_{n}(b,g) ||- \log||S_{n}(b,g) v_{g}||  < R' \} > 1-\varepsilon/2
\end{align}

Consider now an interval $I$ that contains the $R'$-neighborhood of $J$. Then using (\ref{eq7}) and (\ref{eq8}), for all $g\in G$, $s>s_{g,v_{g}}$,{
\begin{align*}
& \beta\{b\in B, \exists n\geq 0,\, \log||S_{n}(b,g)|| \in  I+s \} \\
&\geq  \beta \big\{b\in B, \exists n\geq 0,\, \log||S_{n}(b,g) v_{g}|| \in  J+s  \\
 & \,\,\,\,\,\,\,\,\,\,\,\,\,\,\,\,\,\,\,\,\,\,\,\,\,\,\,\,\,\,\,\,\,\,\,\,\,\,\text{ and }   \log||S_{n}(b,g) ||- \log||S_{n}(b,g) v_{g}||  < R' \big\} \\
&\geq 1-\varepsilon
\end{align*}}
 We have finally obtained (\ref{eq6}), hence the lemma.

\end{proof}

\bigskip

\begin{proof}[Proof of \Cref{ren-lem1}]
Let $\varepsilon\in ]0,1[$ and $I\subseteq \R$ as in \Cref{ren-prelem1}. Let $S\subseteq \R_{+}$ be a subset containing arbitrarily large real numbers.  Define by induction a family of stopping times $(n_{k}: B\rightarrow \N)_{k\in \N}$ as follows :

\begin{itemize}
\item $n_{0}=0$ 
\item
Applying \Cref{ren-prelem1} with $g=b_{1}\dots b_{n_{k}(b)}$,  choose $n_{k+1}(b)>n_{k}(b)$ for which
$$\beta\{a\in B,  \exists n\in \rrbracket  n_{k}(b), n_{k+1}(b) \rrbracket  , \, t_{b_{1}\dots b_{n_{k}(b)} a_{n_{k}(b)+1} \dots a_{n} }\in I+S\}>1-\varepsilon $$ 
and such that  $n_{k+1}$ is a measurable function of the product $b_{1}\dots b_{n_{k}(b)}$. 
\end{itemize}

Now observe that if $1\leq k_{1}<\dots< k_{N}$ are distinct integers, then by the Markov property 
$$\beta\{b\in B,  \forall  i \in \rrbracket 1, N \rrbracket,  \forall n \in \rrbracket n_{k_{i}-1},   n_{k_{i}}\llbracket , \, t_{n}(b)\notin I+S\} \leq (1-\varepsilon)^N $$
Hence, given any infinite sequence of integers $1\leq k_{1}<\dots <k_{i}<\dots$, 
$$\beta \{b\in B,  \forall  i \geq 1, \forall n\in \rrbracket n_{k_{i}-1},   n_{k_{i}}\rrbracket ,  \, t_{n}(b)\notin I+S\}=0 $$
This equality implies the statement of the lemma.
\end{proof}

\bigskip
The second proposition will be used to compare the Green functions of the geodesic flow and of  the $\mu$-walk on $X$.  It states that the average time spent  by the Cartan projection $(t_{n}(b))$ of a trajectory $(b_{1}\dots b_{n})$  in a given interval $I$ of $\R$ is bounded by a constant that only depends on  $\leb(I)$.
\begin{prop.} \label{ren-lem2}

Let $I\subseteq \R$ be a bounded interval. Then
$$\sup_{t\in \R}\,\mathbb{E}_{\beta}(\sharp \{n \geq 0, \,t_{n}\in I +t\})  <\infty$$ 

\end{prop.}

\smallskip
The idea is that the Cartan projection of a trajectory $(b_{1}\dots b_{n})_{n\geq 0}$ has a low probability to come back to an interval once it has gone past it. We formalize this in \Cref{lemme-devi}. 
\begin{lemme}\label{lemme-devi}
 \emph{Let $R,\varepsilon>0$. There exists $n_{0}\geq0$ such that for every $g\in G$, 
$$\beta \{b\in B,\,\forall n \geq n_{0},\, t_{gb_{1}\dots b_{n}}\geq t_{g}+R\}\geq 1-\varepsilon$$}
\end{lemme}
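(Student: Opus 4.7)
The plan is to transfer the statement to an estimate on operator norms via the embedding $G \subseteq SL(V_0)$ already exploited in \Cref{ren-prelem1}, then combine the uniform regularity estimate derived there from \cite[Cor.~4.8]{BQRW} with the law of large numbers for the product $M_n(b) := {}^tb_n \cdots {}^tb_1$. Since $\|g\| = e^{c_0 t_g}$ with $c_0 = \log\|a_1\| > 0$, the desired inequality $t_{gb_1\cdots b_n} \geq t_g + R$ is equivalent to the operator-norm bound $\log \|gb_1\cdots b_n\| \geq \log \|g\| + c_0 R$.

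Mimicking the trick from the proof of \Cref{ren-prelem1}, for each $g$ I choose a unit vector $v_g \in V_0$ with $\|{}^tg\, v_g\| \geq \tfrac{1}{2} \|{}^tg\|$, and set $w_g = {}^tg\, v_g / \|{}^tg\, v_g\|$. Using $\|{}^tg\| = \|g\|$,
$$\|gb_1\cdots b_n\| = \|M_n(b)\,{}^tg\| \geq \|M_n(b)\,{}^tg\, v_g\| = \|{}^tg\, v_g\|\cdot\|M_n(b)\, w_g\| \geq \tfrac{1}{2}\|g\|\cdot\|M_n(b)\, w_g\|,$$
so $\log \|gb_1\cdots b_n\| \geq \log \|g\| - \log 2 + \log\|M_n(b)\,w_g\|$. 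It therefore suffices to prove the following \emph{sub-claim}: for every $R', \varepsilon > 0$ there is $n_0$ (independent of $w$) such that
$$\inf_{\|w\|=1}\beta\bigl\{b \in B : \log \|M_n(b)\, w\| \geq R' \text{ for every } n \geq n_0\bigr\} \geq 1-\varepsilon,$$
since applying this to $R' = c_0 R + \log 2$ and $w = w_g$ yields the lemma with a threshold $n_0$ that is uniform in $g$.

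To establish the sub-claim I intersect two high-probability events. First, the uniform \cite[Cor.~4.8]{BQRW} estimate already recalled in the proof of \Cref{ren-prelem1} gives a constant $R_0 = R_0(\varepsilon)$ such that, for every unit vector $w \in V_0$,
$$\beta\bigl\{b : \forall n \geq 0,\ \log\|M_n(b)\| - \log\|M_n(b)\,w\| < R_0\bigr\} \geq 1 - \varepsilon/2.$$
Second, since $\lambda_\mu > 0$ (positivity of the top Lyapunov exponent, ensured by Zariski-density) and $\mu$ has finite first moment, Furstenberg--Kesten's law of large numbers gives $\tfrac{1}{n} \log\|M_n(b)\| \to \lambda_\mu$ for $\beta$-almost every $b$; by Egorov one may then pick $n_0$ large enough that
$$\beta\bigl\{b : \log\|M_n(b)\| \geq R' + R_0 \text{ for every } n \geq n_0\bigr\} \geq 1 - \varepsilon/2.$$
On the intersection of these events (a subset of $B$ of measure at least $1-\varepsilon$, uniformly in the choice of unit $w$) one has $\log\|M_n(b)\,w\| \geq R'$ for every $n \geq n_0$, proving the sub-claim.

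The only real obstacle is securing the uniformity over unit vectors $w$ in the first event; this is exactly the strength of the BQRW reformulation used in the preceding proof, and I would simply cite it as a black box.
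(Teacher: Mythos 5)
Your proof is correct, and it takes a genuinely different route from the paper. The paper proves (\ref{eq10}) by contradiction: assuming failure, it extracts a sequence $(g_k)$ with ${}^tg_k/\|{}^tg_k\| \to f_\infty$, uses Fatou on the bad events to produce a set $B'$ of measure $\geq \varepsilon$, and derives $\|{}^tb_{n_i}\cdots{}^tb_1 f_\infty\| = o(\|{}^tb_{n_i}\cdots{}^tb_1\|)$ on $B'$, contradicting \cite[Cor.~4.8]{BQRW} directly. You instead reduce to a clean quantitative sub-claim about unit vectors $w$, which you verify by intersecting the uniform-in-$w$ estimate already extracted from \cite[Cor.~4.8]{BQRW} in the proof of \Cref{ren-prelem1} with the event $\{\log\|M_n\| \geq R' + R_0,\ \forall n\geq n_0\}$, whose measure is close to $1$ for large $n_0$ by $\tfrac1n\log\|M_n\| \to \lambda_\mu > 0$. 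The two arguments are closely related at the root (both ultimately rest on \cite[Cor.~4.8]{BQRW} and positivity of $\lambda_\mu$, the latter implicit in the paper's $o(\cdot)$ step), but yours is more constructive and modular: it reuses the uniform bound that the paper has already established as a standalone fact, and makes the role of the law of large numbers explicit, rather than re-running a fresh compactness argument over endomorphisms. One minor stylistic note: what you call ``Egorov'' is really just monotone convergence of the decreasing tail events $\{\exists n\geq n_0,\ \log\|M_n\|<R'+R_0\}$ to a null set; citing Egorov is harmless but heavier machinery than needed.
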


\smallskip
\begin{proof}[Proof of \Cref{lemme-devi} ]
We use again the represention $G\subseteq SL(V_{0})$ introduced earlier in the section. Our assumptions on the scalar product $\langle.,.\rangle_{0}$ of $V_{0}$  imply that the product $gb_{1}\dots b_{n}$ and its adjoint  $S_{n}(b,g)={^tb_{n}}\dots {^t b_{1}} {^tg}$ have the same Cartan projection $t_{gb_{1}\dots b_{n}}$. Hence  
$$||S_{n}(b,g)||=e^{c_{0}t_{gb_{1}\dots b_{n}}}$$ 
where $c_{0}=\log ||a_{1}||>0$.  \Cref{lemme-devi} can then be rephrased as :
\emph{for any $C, \varepsilon>0$, there exists $n_{0}\geq0$ such that for all $g\in G$, 
 \begin{align} \label{eq10}
 \beta\{b\in B, \forall n \geq n_{0},\,  || S_{n}(b,g)||\geq C||{^tg}||\}\geq 1-\varepsilon 
 \end{align}}
To prove (\ref{eq10}), assume by contradiction there exist  $C, \varepsilon >0$, a sequence of integers $(N_{k})\to \infty$ and elements $(g_{k})\in G$  such that for all $k\geq0$, 
\begin{align} \label{eq11}
\beta\{b\in B, \exists n \geq N_{k},\,   || S_{n}(b,g_{k})||< C||{^tg_{k}}|| \}\geq \varepsilon 
\end{align}
Up to extraction, one may also suppose that the normalized sequence $(\frac{^tg_{k}}{||{^tg_{k}}||})$ converges to an endomorphism $f_{\infty}\in \text{End}(V_{0})$.
By (\ref{eq11}), there exists a set $B'\subseteq B$ of measure at least $\varepsilon$ such that for every $b\in B'$, there are sequences of integers $(k_{i})$, $(n_{i})$ going to infinity and satisfying 
$$ || S_{n_{i}}(b,g_{k_{i}})||< C||{^tg_{k_{i}}}||$$
leading to 
$$ || {^tb}_{n_{i}}\dots {^tb_{1}} f_{\infty}|| <  || {^tb}_{n_{i}}\dots {^tb_{1}} (f_{\infty}-\frac{{^tg_{k_{i}}}}{||{^tg_{k_{i}}}||} )|| +C = o(||{^tb}_{n_{i}}\dots {^tb_{1}}||)$$
where the last equality is true for almost every $b$. But this yields a contradiction with  \cite[Corollary 4.8]{BQRW}. Hence we have (\ref{eq10}), and the lemma follows.

\end{proof}

We can now conclude the section with the proof of \Cref{ren-lem2} 
\begin{proof}[Proof of \Cref{ren-lem2} ]
Denote by $N_{I}: B\rightarrow \N\cup\{\infty\}, b\mapsto \sharp\{n\geq 0, \,t_{n}(b)\in I\}$ the function that counts the time spent in $I$ for the Cartan projection of a $\mu$-trajectory on $G$.  We want to bound above the expectation of  $N_{I}$.  Let  $R>\leb(I)$,  $\varepsilon\in ]0,1[$, and $n_{0}\geq 0$ as in  \Cref{lemme-devi}. We are going to show that for all $k\geq 0$, 
\begin{align}\label{eq12}
\beta \{b\in B, N_{I}(b)\geq kn_{0}+1\}\leq \varepsilon^k 
\end{align}

Once (\ref{eq12}) is established, it is easy to conclude : 
$$ \int_{B}N_{I} \,d\beta=\sum_{n\geq1} \beta(N_{I}\geq n)\leq \sum_{k\geq 0} n_{0}\beta(N_{I}\geq kn_{0}+1) \leq n_{0} \frac{1}{1-\varepsilon} $$
and the constant $n_{0} \frac{1}{1-\varepsilon}$ depends on  $I$ only via the choice of $R$ which is solely bound to satisfy $R>\leb(I)$. 

\bigskip
Let us now prove (\ref{eq12}). We introduce a sequence of stopping times $(\tau_{i} : B\rightarrow \N\cup\{\infty\})_{i\geq 1}$ indicating the first hitting time of the interval $I$ by  the sequence  $(t_{n}(b))_{n\geq 0}$, then its successive return times separated by at least $n_{0}$ steps.   $$\tau_{1}=\inf\{n\geq 0,\, t_{n} \in I\}, \,\,\,\,\,\tau_{i+1}=\inf\{n\geq \tau_{i}+n_{0},\, t_{n}\in I \}$$

Observe that
\begin{align*}
\beta\{b\in B, N_{I}(b)\geq kn_{0}+1\}&\leq \beta\{b\in B, \, \tau_{k+1}(b) <\infty \}\\
&=\sum_{{\bf j}\in \N^k } \beta \{b\in B, \, (\tau_{i}(b))_{i\leq k}=\textbf{j}, \tau_{k+1}(b)<\infty\}
\end{align*}

 The inequality  $R>\leb(I)$ yields for every $\textbf{j}=(j_{1},\dots, j_{k})\in \N^k$ the inclusion  
$$\{b\in B,\, (\tau_{i}(b))_{i\leq k}=\textbf{j}, \tau_{k+1}(b)<\infty \}\subseteq \{b\in B,\, (\tau_{i}(b))_{i\leq k}=\textbf{j} \text{ et }\exists n\geq j_{k}+n_{0}, t_{n}(b) < t_{j_{k}}(b)+R \}$$

Using the  Markov property and  \Cref{lemme-devi}, we infer that 
$$\beta \{b\in B, \,(\tau_{i}(b))_{i\leq k}=\textbf{j}, \tau_{k+1}(b)<\infty    \}\leq \beta \{b\in B, \,(\tau_{i}(b))_{i\leq k}=\textbf{j}\}\, \varepsilon$$

Summing over every $\textbf{j}\in \N^k$, and iterating the process, we obtain 
\begin{align*}
  \beta \{b\in B, \, \tau_{k+1}(b)<\infty\}&\leq  \beta\{b\in B, \, \tau_{k}(b)<\infty\}\, \varepsilon \leq \dots \leq \varepsilon^k
\end{align*}

Hence, as announced,
 $$\beta\{b\in B, N_{I}(b)\geq kn_{0}+1\}\leq \varepsilon^{k}$$

 \end{proof}

\subsection{Recurrence and ergodicity}  \label{sec-recerg}

In this section we prove  \Cref{th-rec} : we assume  the geodesic flow $(a_{t})_{t\in \R}$ on $X$ to be recurrent ergodic and show  that  the $\mu$-random walk on $X$ is recurrent ergodic as well.

\subsubsection{Recurrence}

We begin with the recurrence of the walk. 
\bigskip

\begin{lemme}\label{rec}
 The $\mu$-walk on $X$ is recurrent
 \end{lemme}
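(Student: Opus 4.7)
The plan is to combine the bounded deviations result (\Cref{th-traj-geod}) with the renewal statement \Cref{ren-lem1} and exploit the recurrence and ergodicity of the geodesic flow to produce return times near $x$ for the walk. Informally, the walk shadows the random geodesic ray $(xk_\infty(b)a_s)_{s\geq 0}$ in $X$; this ray returns to a small neighborhood of $x$ for a positive-density set of $s$ by ergodicity; and the Cartan projection $(t_n(b))$ fills $\R_+$ densely enough, via \Cref{ren-lem1}, that it hits those return times infinitely often.

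Concretely, I would fix a Birkhoff-generic point $x\in X$ for the geodesic flow and compact neighborhoods $F''\subseteq F'\subseteq F$ of $x$ of positive Haar measure, arranged so that $F'\supseteq F''a_r$ for every $r$ in a fixed bounded interval $I$ and $F$ contains the $R$-thickening of $F'$. Choose $\varepsilon\in (0,1)$ small and let $R>0$ be furnished by \Cref{ThAbis}; by left-invariance of $d$, the set $D_R(b):=\{n:d(xb_1\cdots b_n,\,xk_\infty(b)a_{t_n(b)})\leq R\}$ has lower density $>1-\varepsilon$ in $\N$. Set $y_b:=xk_\infty(b)$; since right-translation by $k_\infty(b)\in K$ preserves the Haar measure on $X$, Fubini together with Birkhoff's theorem applied to the ergodic geodesic flow yields, for $\beta$-a.e. $b$, that the set $S_{x,b}:=\{s\geq 0: y_b a_s\in F''\}$ has positive density $\delta>0$. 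Feeding $S_{x,b}$ into \Cref{ren-lem1} produces infinitely many $n$ with $t_n(b)\in I+S_{x,b}$; for any such $n$ lying additionally in $D_R(b)$, one has $y_b a_{t_n(b)}\in F'$ and hence $xb_1\cdots b_n\in F$, which is what we want.

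The hard part will be twofold. First, \Cref{ren-lem1} is stated for a \emph{deterministic} $S$, whereas $S_{x,b}$ depends on $b$ through the tail function $k_\infty(b)$; one must either re-examine the stopping-time proof of \Cref{ren-lem1} to check it extends to such tail-measurable $S$, or approximate $S_{x,b}$ by a suitable countable family of deterministic sets. Second, the mere infiniteness of $E:=\{n:t_n(b)\in I+S_{x,b}\}$ does not guarantee that $D_R(b)\cap E$ is infinite, since $D_R(b)^c$ can a priori have density up to $\varepsilon$. The likely remedy is to upgrade the conclusion of \Cref{ren-lem1} into a positive-density statement: using the law of large numbers $t_n(b)/n\to\lambda_\mu$ together with the positive density $\delta$ of $S_{x,b}$ in $\R_+$, one transfers $\delta$ into a positive lower density for $E$ in $\N$; then, choosing $\varepsilon<\delta$ at the outset, the intersection $D_R(b)\cap E$ automatically has positive density, hence is infinite, completing the proof.
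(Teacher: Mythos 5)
Your plan correctly isolates the two real obstacles, but the remedies you sketch for them do not close the gaps, and the second one in particular is likely not salvageable with only a first moment assumption on $\mu$.

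The central problem is your proposed upgrade of \Cref{ren-lem1} to a positive-density statement. You argue that $t_n(b)/n\to\lambda_\mu$ plus the positive density $\delta$ of $S_{x,b}$ in $\R_+$ should give $E=\{n:t_n(b)\in I+S_{x,b}\}$ positive lower density in $\N$. This does not follow. The law of large numbers only controls $t_n(b)$ to first order; the fluctuations $t_n(b)-n\lambda_\mu$ are $o(n)$ but can be unbounded, and without a local limit theorem or a genuine renewal theorem for the Cartan projection you cannot conclude that the sequence $(t_n(b))_n$ hits $I+S_{x,b}$ with any prescribed frequency. Indeed the paper remarks explicitly (after equation (5)) that the pointwise renewal statement for the Cartan projection is open when $\mu$ has only a finite first moment; proving even a positive-density version of it for a set of density $\delta$ would be essentially as hard, and \Cref{ren-lem2} only gives an \emph{upper} bound on expected hitting counts, not a lower one. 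Moreover, even granting such an upgrade, your choice of parameters is fragile: you would need the resulting density $\delta'$ of $E$ to exceed the $\varepsilon$ from \Cref{ThAbis}, but a priori $\delta'$ could be much smaller than the density $\delta$ of $S_{x,b}$, so setting $\varepsilon<\delta$ at the outset does not guarantee $D_R(b)\cap E$ is infinite.

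The first issue you raise (tail-measurability of $S_{x,b}$ through $k_\infty(b)$) is also genuine, and neither of your two suggested fixes is carried out. The paper resolves both difficulties simultaneously by a different mechanism: it never attempts a density intersection at all. Instead, it uses the $K$-invariance of Haar measure and Fubini to eliminate $k_\infty(b)$ (reducing to \Cref{rec-reduction}), then observes that the event $xa_{t_n(b)}\in L$ depends only on $(b_1,\dots,b_n)$ while the flat-distance condition can be made to depend only on a bounded window of future increments (via \Cref{remp}, replacing $\xi_{T^nb}$ by $b_{n+1}\cdots b_{n+k_n}\xi_n$). This decoupling lets one apply the Markov property at the random return times furnished by \Cref{ren-lem1}: at each such time, conditionally on the past, the flat-distance condition holds with probability at least $2/3$ (\Cref{k0}), and a Borel--Cantelli argument with stopping times shows both conditions hold simultaneously infinitely often. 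This sidesteps the need for any density information about the renewal set, which your approach cannot avoid.
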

 
 \bigskip
 Consider a large compact set $\widetilde{L}\subseteq X$. We aim to show that for almost every $x\in X$, and $\beta$-almost every $b\in B$, there exist infinitely many times $n\geq0$ such that 
 $$xb_{1}\dots b_{n} \in \widetilde{L}$$
 
Endow $G$ with a left invariant Riemannian metric, $X$ with the quotient metric, denote by  $\xi^-_{0}=P^-/P^-$  the base point of the flag variety $\PP^-$. According to  \Cref{distform}, there exists  a constant $C_{0}>0$ such that for $\beta$-almost every $b\in B$,  large enough $n\geq 0$, 
 $$d(b_{1}\dots b_{n}, k_{\infty}(b)a_{t_{n}(b)})  \leq C_{0}d(b_{1}\dots b_{n}, F(\xi_{0}^-, \xi_{b}))+C_{0}$$

Fix a compact subset $L\subseteq X$ and a constant $R>0$ (to be specified below), and assume  $\widetilde{L}$ contains the $C_{0}(R+1)$-neighborhood of $L$.  In this case, we just need to show that for almost every $x\in X$, $\beta$-almost every $b\in B$, there exists infinitely many times $n\geq 0$ such that 
\begin{align}\label{eq13}
xk_{\infty}(b)a_{t_{n}(b)} \in L \,\,\, \,\,\,\text{and}  \,\,\, \,\,\,d(b_{1}\dots b_{n}, F(\xi^-_{0}, \xi_{b}))\leq R
\end{align}

The difficulty is that the set of return times in $L$ given by 
$$\{n\geq 0, \,xk_{\infty}(b)a_{t_{n}(b)} \in L\}$$
 has null density in $\N$, hence we can not say directly that it intersects   $$\{ n\geq 0, \,d(b_{1}\dots b_{n}, F(\xi^-_{0}, \xi_{b}))\leq R\}$$ 
even if the latter has a density close to one (by the proof of \Cref{th-traj-geod}).  

\bigskip
A first important observation is  that we can ignore the term $k_{\infty}(b)$. More precisely, using Fubini's Theorem and equation \ref{eq4}, the statement (\ref{eq13}), hence \Cref{rec}, reduces to the following.

\smallskip
\begin{lemme}\label{rec-reduction}
We can choose the parameters $(L, R)$ such that for almost every $x\in X$, $\beta$-almost every $b\in B$,  infinitely many times $n\geq 0$,
$$xa_{t_{n}(b)}  \in L \,\,\, \,\,\,\text{and}  \,\,\, \,\,\,d(e, F(b^{-1}_{n}\dots b^{-1}_{1}\xi^-_{0}, \xi_{T^nb}))\leq R $$
\end{lemme}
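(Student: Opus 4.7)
The key difficulty, as highlighted in the paragraph preceding the lemma, is that the set $\{n : xa_{t_n(b)} \in L\}$ may have null density in $\N$, so one cannot merely intersect it with the density-$(1-\varepsilon)$ good-deviation set. My plan is therefore to combine three ingredients: the ergodic recurrence of the geodesic flow (the standing hypothesis of \Cref{th-rec}), the renewal-type \Cref{ren-lem1}, and the bounded-deviation property of \Cref{th-traj-geod}. Fix once and for all a compact $L_0 \subseteq X$ of positive Haar measure and a small $\varepsilon>0$; the compact $L$ will be of the form $L_0\exp(I)$, with $I$ the large interval produced by \Cref{ren-lem1}, and $R$ will be chosen large in the next step.

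First I would use the assumed recurrence-ergodicity of $(a_t)$ together with the Hopf alternative (Fact~2) to conclude that for a.e.\ $x\in X$ the Green function $G(x,L_0)=+\infty$, so the set $S_x := \{s\geq 0 : xa_s\in L_0\}$ is unbounded. Then I would apply \Cref{ren-lem1} with the (deterministic, once $x$ is fixed) set $S=S_x$: for $\beta$-a.e.\ $b\in B$ there are infinitely many $n$ with $t_n(b)\in I+S_x$. Writing $t_n(b)=i+s$ with $i\in I$, $s\in S_x$ gives $xa_{t_n(b)}=(xa_s)\,a_i\in L_0\exp(I)=L$, handling the first condition.

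For the second condition, I would show that for $R$ large enough and $\beta$-a.e.\ $b$, the set $D(b):=\{n : d(b_1\dots b_n, F(\xi_0^-,\xi_b))\leq R\}$ has lower density $>1-\varepsilon$. This should follow from \Cref{th-traj-geod} combined with the rank-one fact that two geodesic rays ending at the same point $\xi_b\in\PP$ converge exponentially, allowing one to compare the flat $F(\xi_0^-,\xi_b)$ with $F(k_\infty(b)\xi_0^-,\xi_b)=k_\infty(b)\exp(\mathfrak{a})K$, the latter passing through $k_\infty(b)a_{t_n(b)}$. By equation~(\ref{eq4}), $D(b)$ is exactly the set of $n$ at which the second condition of the lemma holds.

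The main obstacle is merging these two facts: the infinite set produced in the second paragraph may have zero density, so its intersection with $D(b)$ is not automatically infinite. My plan is to refine the stopping-time construction in the proof of \Cref{ren-lem1}: at each stage $n_k$, rather than only requiring some $n\in(n_k,n_{k+1}]$ with $t_n\in I+S_x$, I would simultaneously require $n\in D(b)$. The existence of such an $n$ with conditional probability $>1-2\varepsilon$ should come from combining \Cref{ren-prelem1} applied to the shifted trajectory with a quantitative version of the density estimate for $D(b)$ coming from the $T^+$-Birkhoff argument in the proof of \Cref{th-traj-geod}, applied on a sufficiently long sliding window $(n_k, n_{k+1}]$. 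Iterating via the Markov property and Borel--Cantelli yields infinitely many such $n$ almost surely, which is precisely the conclusion of the lemma.
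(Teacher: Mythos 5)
Your handling of the first condition is essentially the paper's: use recurrence of the geodesic flow to produce an unbounded return set $S_x$, then invoke \Cref{ren-lem1} to land $t_n(b)$ in $I+S_x$ infinitely often (this is \Cref{rec-tn}). The gap is in the second condition and in the proposed merging. By equation~(\ref{eq4}) the second condition at time $n$ is
$d(e, F(b_n^{-1}\dots b_1^{-1}\xi_0^-,\, \xi_{T^n b}))\leq R$,
which involves $\xi_{T^n b}$, the limit flag of the \emph{tail} $(b_{n+1},b_{n+2},\dots)$. Thus your set $D(b)$ is not determined by $b_1,\dots,b_n$, and the iterated-conditioning scheme you sketch — condition on the situation at $n_k$ and ask for some $n\in(n_k,n_{k+1}]$ with both properties — cannot be closed by the Markov property alone, because conditioning on the past gives no control over $\xi_{T^n b}$. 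A separate problem: \Cref{ren-prelem1} gives only one hit of $I+S_x$ per window with high probability, not a positive-density set of hits, so even knowing $D(b)$ has density close to $1$ in the window does not force that single hit to lie in $D(b)$.

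The paper's proof contains a key step you do not have a substitute for: \Cref{remp} shows that $\xi_{T^n b}$ may be replaced, up to error $\delta$, by $b_{n+1}\dots b_{n+k_n}\xi_n$ for a suitable deterministic sequence $(k_n)$ and fresh i.i.d.\ $\xi_n\sim\nu_{\PP}$. This makes the second condition depend only on a bounded window of future instructions plus independent randomness. The stopping times $\tau_i$ are then built \emph{only} from the (genuinely adapted) hitting times of $L$, spaced by $k_{\tau_i(b)}$, and \Cref{k0} guarantees each substitute flag event has conditional probability $>2/3$; iterating with the Markov property and sending $i_1\to\infty$ yields infinitely many good $\tau_i$, and \Cref{remp} transfers back to $\xi_{T^{\tau_i} b}$. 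Without such a replacement of $\xi_{T^n b}$ by a fresh, near-independent source of randomness, your argument does not close; this is the missing idea.
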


\smallskip
This reduction is \emph{crucial}  because it separates the effects of the $n$ first instructions $(b_{1},\dots, b_{n})$ and of the tail $T^nb$, thus allowing to argue conditionally to the situation at time $n$. To show \Cref{rec-reduction}, we use the following strategy : Prove that for almost every $(x,b)$, the sequence $(xa_{t_{n}(b)})_{n\geq 0}$ meets $L$ infinitely often (\Cref{rec-tn}). Show that among those $n$, infinitely many  satisfy 
$$d(e, F(b^{-1}_{n}\dots b^{-1}_{1}\xi^-_{0}, \xi_{T^nb}))\leq R$$
To obtain the latter, we justify in \Cref{remp} that we may replace  $\xi_{T^n b}$ by the term $b_{n+1}\dots b_{n+k_{n}}\xi_{n}$ where $k_{n}\geq0$ is a large integer, and $\xi_{n}$ is a random point on the flag variety $\PP$, then  we use the Markov property  to conclude (together with \Cref{k0}).

\bigskip

Let us begin with the statements and  proofs of the three lemmas advertised at the moment. 
\begin{lemme}\label{rec-tn}
We can choose the compact set $L\subseteq X$ such that for almost every $x\in X$, $\beta$-almost every $b\in B$, there exists infinitely many times $n\geq 0$ for which
$$xa_{t_{n}(b)}  \in L$$
\end{lemme}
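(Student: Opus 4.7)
The plan is to combine the assumed recurrence--ergodicity of the geodesic flow with the renewal estimate \Cref{ren-lem1}. I first fix a compact subset $L_{0}\subseteq X$ of positive Haar measure, and for each $x\in X$ set
$$S_{x}=\{s\geq 0,\, xa_{s}\in L_{0}\}.$$
Because the flow $(a_{t})_{t\in\R}$ on $X$ is recurrent and ergodic (hence conservative with respect to a Haar-class $\sigma$-finite invariant measure), the classical conservative/ergodic alternative forces $S_{x}$ to be unbounded in $\R_{+}$ for almost every $x\in X$: for any $x$, the orbit $(xa_t)$ visits the positive-measure set $L_0$ for arbitrarily large times. This step uses only the hypothesis of \Cref{th-rec} together with the measurable structure of $X$, and is a classical consequence of Hopf's alternative reviewed in \Cref{Hopfdich}.

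Next I would invoke \Cref{ren-lem1} to pick a bounded interval $I\subseteq \R$ such that, for every subset $S\subseteq \R_{+}$ containing arbitrarily large reals, for $\beta$-almost every $b\in B$ one has $\sharp\{n\geq 0,\,t_{n}(b)\in I+S\}=\infty$. The compact set announced in the lemma is then
$$L := L_{0}\cdot\{a_{\iota}:\iota\in I\},$$
which is compact because $L_{0}$ is compact and $I$ is bounded. Indeed, given $x\in X$ with $S_{x}$ unbounded, applying \Cref{ren-lem1} to $S=S_{x}$ yields, for $\beta$-almost every $b\in B$, infinitely many $n\geq 0$ with $t_{n}(b)\in I+S_{x}$. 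Writing $t_{n}(b)=s+\iota$ with $s\in S_{x}$ and $\iota\in I$ gives
$$xa_{t_{n}(b)}=(xa_{s})\,a_{\iota}\;\in\; L_{0}\,a_{\iota}\;\subseteq\; L,$$
which is precisely the desired conclusion.

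The final technical point is to splice the two ``almost everywhere'' statements together; I would do this via Fubini's theorem on $X\times B$, after noting that the map $(x,b)\mapsto \sharp\{n\geq 0:t_{n}(b)\in I+S_{x}\}$ is jointly measurable. Since its $\beta$-slice is $\beta$-almost surely infinite for almost every $x\in X$, the set of pairs $(x,b)$ for which $(xa_{t_{n}(b)})$ visits $L$ infinitely often has full measure in $X\times B$, proving the lemma. I do not foresee a genuine obstacle here: the renewal statement \Cref{ren-lem1} is the only deep input, and the only subtle point is the passage from recurrence-ergodicity of the flow to the unboundedness of $S_{x}$ for a.e.\ $x$, which is standard in infinite ergodic theory.
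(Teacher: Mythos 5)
Your proof is correct and takes essentially the same approach as the paper: both use conservativity and ergodicity of the geodesic flow to get, for a.e.\ $x$, an unbounded set of return times to a fixed compact set, and then invoke \Cref{ren-lem1} with that return set to make the Cartan projections $(t_n(b))$ land in an $I$-thickening of it. The only cosmetic difference is where the thickening by $I$ happens: the paper fixes a point $y_0$, takes $L$ to be a ball of radius $c'+1$ around $y_0$ (so that the return set to $L$ automatically contains translates of $I$), while you thicken the target $L_0$ directly to $L=L_0\cdot\{a_\iota:\iota\in \overline I\}$ and feed $S_x$ into \Cref{ren-lem1}; these are equivalent reorganizations of the same argument, and your closing Fubini remark, while not actually needed (the conclusion is already a ``for a.e.\ $x$, for $\beta$-a.e.\ $b$'' statement by fixing $x$ first), is harmless.
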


\begin{proof}
Let us first specify the compact set $L$. According to \Cref{ren-lem1}, there exists a constant $c>0$ such that if $I=[0,c]$ and $(s_{k})\to +\infty$ then for $\beta$-almost every $b\in B$, 
\begin{align}\label{eq14}
\sharp  \{n \geq 0, \,\,t_{n}(b) \in \bigcup_{k\geq 0} I+s_{k}\} =+\infty 
\end{align}
Set $c'=\max_{|t|\leq c} d(e,a_{t})$ where $d$ refers to the metric on $G$.   Fix some point $y_{0}\in X$ and set $L=\{y\in X, \, d(y_{0}, y) \leq c'+1\}$ the set of vectors $y\in X$ whose distance (in $X$) to $y_{0}$ is less than $c'+1$. 

 Let $E\subseteq X$ be the set of elements $x\in X$ such that there exists a sequence of real numbers $(s_{k})\rightarrow +\infty$ for which $(xa_{s_{k}})\rightarrow y_{0}$. The assumption that the geodesic flow on $X$ is recurrent ergodic implies that $E$ has full measure in $X$.  Let $x\in E$. Then the set 
 $$\{t >0, \,xa_{t}\in L\}$$
 contains a subset of the form $\bigcup_{k\geq 0} I+s'_{k}$ where $(s'_{k})\rightarrow+\infty$. Hence, by (\ref{eq14}), for $\beta$-almost every $b\in B$, the sequence $ (xa_{t_{n}(b)} )$ meets $L$ infinitely often.

\end{proof}

\bigskip

\begin{lemme} \label{remp}
Let $\delta>0$.  There exists a sequence of integers  $(k_{n})\in \N^{\N^\star}$ such that for $\beta$-almost-every $b\in B$ and $\nu_{\PP}^{\otimes \N^\star}$-almost every $(\xi_{n})\in\PP^{\N^\star}$, for large enough $n\geq 0$, 
$$d(b_{n+1}\dots b_{n+k_{n}}\xi_{n}, \xi_{T^nb}) \leq \delta$$
\end{lemme}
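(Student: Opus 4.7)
The idea is to leverage the proximality of the $\mu$-action on $\PP$ together with the product structure of $(B\times \PP^{\N^\star}, \beta\otimes \nu_\PP^{\otimes \N^\star})$, and conclude via a Borel--Cantelli argument. No moment assumption on $\mu$ enters; we only use the stationarity and proximality of $\nu_\PP$ recalled in \Cref{dynamicsP}.

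First I would record the following probabilistic consequence of proximality. By \cite[Proposition 10.1]{BQRW} and \Cref{dynamicsP}, for $\beta$-almost every $b\in B$ the pushforward $(b_1\cdots b_k)_\star \nu_\PP$ converges weakly to $\delta_{\xi_b}$ on the compact set $\PP$. This is equivalent to $\nu_\PP\{\xi\in\PP: d(b_1\cdots b_k\xi, \xi_b) > \delta\} \underset{k\to+\infty}{\longrightarrow} 0$, and integrating over $b$ (using dominated convergence) yields
$$p(k) \,:=\, (\beta\otimes \nu_\PP)\bigl\{(b,\xi)\in B\times \PP: d(b_1\cdots b_k\xi, \xi_b) > \delta\bigr\} \,\underset{k\to+\infty}{\longrightarrow}\, 0.$$

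Next I would pick the sequence $(k_n)_{n\geq 1}$ by requiring $p(k_n) \leq 1/n^2$, and introduce on the product space the events
$$A_n := \bigl\{(b,(\xi_m)_m)\in B\times \PP^{\N^\star}\, :\, d(b_{n+1}\cdots b_{n+k_n}\xi_n,\, \xi_{T^n b}) > \delta \bigr\}.$$
The main (mild) obstacle is identifying $\beta\otimes \nu_\PP^{\otimes \N^\star}(A_n)$ with $p(k_n)$. For this one observes that the shifted sequence $T^n b$ has distribution $\beta$ and is independent of $\sigma(b_1,\dots,b_n)$, the coordinate $\xi_n$ is $\nu_\PP$-distributed and independent of the whole $b$-factor, and the measurable map $b\mapsto \xi_b$ is covariant with the shift (i.e.\ $\xi_{T^n b}$ is a measurable function of $T^n b$ alone). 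Hence $(T^nb, \xi_n)\sim \beta\otimes\nu_\PP$, and $A_n$ has the same probability as its analogue at time $0$, namely $p(k_n)\leq 1/n^2$.

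Finally, since $\sum_n \beta\otimes \nu_\PP^{\otimes \N^\star}(A_n) < \infty$, the Borel--Cantelli lemma yields that for $\beta\otimes\nu_\PP^{\otimes \N^\star}$-almost every $(b,(\xi_m))$, only finitely many $A_n$ occur; equivalently $d(b_{n+1}\cdots b_{n+k_n}\xi_n, \xi_{T^n b}) \leq \delta$ for all sufficiently large $n$, which is the desired statement.
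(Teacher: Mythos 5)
Your proof is correct and follows essentially the same route as the paper: pass from the a.s.\ weak convergence $(b_1\cdots b_k)_\star\nu_\PP\to\delta_{\xi_b}$ to the convergence of $\beta\otimes\nu_\PP$-probabilities, extract a subsequence $(k_n)$ with summable probabilities, identify the law of the shifted event with that at time $0$, and conclude by Borel--Cantelli. The only difference is cosmetic: you spell out the equality in law via the independence/shift-covariance structure, whereas the paper states it as a one-line observation.
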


\begin{proof}

We have for $\beta$-almost every $b\in B$,  
$$(b_{1}\dots b_{n})_{\star} \nu_{\PP} \underset{n\to +\infty}{\longrightarrow} \nu_{\PP, b} =\delta_{\xi_{b}}$$
hence
$$\nu_{\PP}\{\xi \in \PP,  \,d(b_{1}\dots b_{n}\xi, \xi_{b}) > \delta\} \underset{n\to +\infty}{\longrightarrow} 0$$
Integrating in $b\in B$, we obtain 
$$\beta \otimes \nu_{\PP}\{(b,\xi) \in \PP,  \,d(b_{1}\dots b_{n}\xi, \xi_{b}) > \delta\} \underset{n\to +\infty}{\longrightarrow} 0$$
Extracting a subsequence whose sum is finite, we obtain  $(k_{n})\in \N^{\N^\star}$ such that
\begin{align*}
\sum_{n\geq 0} \beta \otimes \nu_{\PP}^{\otimes \N^\star} \{(b, (\xi_{i})) \in B\times \PP^{\N^\star}, \,\, d(b_{1}\dots b_{k_{n}}\xi_{n}, \xi_{b}) > \delta\} <\infty
\end{align*}
The observation that  $d(b_{1}\dots b_{k_{n}}\xi_{n}, \xi_{b})$ and $d(b_{n+1}\dots b_{n+k_{n}}\xi_{n}, \xi_{T^nb})$ have the same law, combined with  Borel-Cantelli Lemma, lead to the statement in  \Cref{remp}. 
\end{proof}

\begin{lemme} \label{k0}
There exists a constant $R'>0$ such that  for every  $\xi^-\in \PP^-$, 
$$\nu \{\xi \in \PP,\, d(e, F(\xi^-,  \xi))\leq R'\}> 2/3$$
\end{lemme}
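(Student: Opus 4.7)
Define $\phi : \PP^- \times \PP \to [0, +\infty]$ by $\phi(\xi^-, \xi) = d(e, F(\xi^-, \xi))$ when $(\xi^-, \xi) \in \FF^+$ and $\phi = +\infty$ otherwise. As recorded in the footnote accompanying the proof of \Cref{ThAbis}, $\phi$ is continuous on the open subset $\FF^+ \subseteq \PP^- \times \PP$, so the sublevel set $U_{R'} := \{\phi < R'\}$ is an open subset of $\PP^- \times \PP$ for every $R' > 0$. My goal is to show that for $R'$ large enough, the $\nu_{\PP}$-mass of the slice $(U_{R'})_{\xi^-} = \{\xi \in \PP : \phi(\xi^-, \xi) < R'\}$ exceeds $2/3$ \emph{uniformly} in $\xi^-$, which clearly implies the lemma (the closed set $\{\phi \leq R'\}$ is even larger).

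For each $R' > 0$, set $f_{R'}(\xi^-) := \nu_{\PP}\big((U_{R'})_{\xi^-}\big)$, so $f_{R'} : \PP^- \to [0,1]$. First, $f_{R'}$ is lower semi-continuous in $\xi^-$: if $\xi^-_n \to \xi^-$ in $\PP^-$, then the openness of $U_{R'}$ gives $\liminf_n \mathbf{1}_{(U_{R'})_{\xi^-_n}}(\xi) \geq \mathbf{1}_{(U_{R'})_{\xi^-}}(\xi)$ pointwise in $\xi$, and Fatou's lemma yields $f_{R'}(\xi^-) \leq \liminf_n f_{R'}(\xi^-_n)$. Second, the proof of \Cref{lem-xib} actually establishes (via Bruhat decomposition) that for \emph{every} $\xi^- \in \PP^-$ the slice $\{\xi : (\xi^-, \xi) \in \FF^+\}$ has full $\nu_{\PP}$-measure. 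Hence, for each fixed $\xi^-$, one has $f_{R'}(\xi^-) \nearrow 1$ as $R' \to +\infty$ by monotone convergence.

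The final step is to upgrade this pointwise convergence to uniform convergence on the compact space $\PP^-$. This is a standard Dini-type argument for monotone families of semi-continuous functions: the maps $1 - f_{R'}$ are upper semi-continuous, nonnegative, and decrease pointwise to $0$, so the closed sets $\{\xi^- \in \PP^- : 1 - f_{R'}(\xi^-) \geq 1/3\}$ form a decreasing family with empty intersection; by compactness of $\PP^-$, one of them is empty. The corresponding $R'$ satisfies $\nu_{\PP}\big(\{\xi : d(e, F(\xi^-, \xi)) \leq R'\}\big) \geq f_{R'}(\xi^-) > 2/3$ for all $\xi^- \in \PP^-$, proving the lemma.

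The main point to verify carefully is the lower semi-continuity of $f_{R'}$, as this is what allows the compactness argument on $\PP^-$ to absorb the dependence on $\xi^-$; everything else reduces to facts already proved earlier in the section.
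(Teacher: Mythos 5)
Your proof is correct but follows a genuinely different technical route from the paper's. Both arguments start from the same key input --- the full-measure fact coming from \Cref{lem-xib}, which gives a pointwise constant $R_{\xi^-}$ for each $\xi^-$ --- and both close with the compactness of $\PP^-$. The paper's way of upgrading the pointwise bound to a uniform one is \emph{geometric}: it shows that $\phi\colon(\xi^-,\xi)\mapsto d(e,F(\xi^-,\xi))$ is not just continuous but \emph{proper} on $\FF^+$ (by identifying it with the quotient map $G/Z_{G}(\mathfrak{a})\to[0,+\infty)$), so $\phi^{-1}([0,C])$ is compact, and then Heine--Cantor provides a uniform $\delta$ for which $\phi\leq C$ on a point forces $\phi\leq C+1$ on its $\delta$-neighbourhood; this lets the per-$\xi^-$ constant propagate to a neighbourhood, and compactness of $\PP^-$ finishes. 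Your way is \emph{measure-theoretic}: you use only the openness of the sublevel sets $\{\phi<R'\}$ (continuity of $\phi$ on the open set $\FF^+$, but not properness), push this through Fatou to get lower semi-continuity of $\xi^-\mapsto\nu_{\PP}(\{\phi(\xi^-,\cdot)<R'\})$, and then invoke a Dini-type argument for a monotone family of upper semi-continuous functions on the compact $\PP^-$. Your route buys you a small simplification --- you never need the properness identification --- at the cost of a slightly more abstract argument; the paper's route keeps everything at the level of explicit geometry but needs that extra input. Both are valid, and each step of your argument (the openness of $U_{R'}$, the Fatou step giving lower semi-continuity, the full-measure slices from \Cref{lem-xib}, and the Dini compactness step) checks out.
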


\begin{proof}
We know from \Cref{lem-xib} that for every $\xi^-\in \PP^-$, there exists a constant $R_{\xi^-}>0$ such that  
\begin{align}\label{eq15}
\nu \{\xi \in \PP,\, d(e, F(\xi^-,  \xi))\leq R_{\xi^-}\}> 2/3
\end{align}
We need to show that $R_{\xi^-}$ may be chosen independently of $\xi^-$.  To see this, notice that the function 
$$\FF^+ \rightarrow [0,+\infty[, (\xi^-,\xi)\mapsto d(e, F(\xi^-, \xi))$$
is continuous and proper, as it can be identified with the quotient  map $G/Z_{G}(\mathfrak{a})\rightarrow  [0,+\infty[, gZ_{G}(\mathfrak{a})\mapsto  d(g^{-1}, \exp(\mathfrak{a})K)$. In particular for all $C>0$, Heine Theorem gives a constant $\delta>0$ such that  for all $(\xi^-,\xi), (\eta^-, \eta)\in \FF^+$ with $d(\xi^-,\eta^-)\leq \delta$, $ d(\xi,\eta)\leq \delta$, 
\begin{align}\label{eq16}
 d(e, F(\xi^-, \xi))\leq C\implies  d(e, F(\eta^-, \eta))\leq C+1 
\end{align}
(\ref{eq15})  and (\ref{eq16}) together imply that the constant $R_{\xi^-}$ can be chosen uniformly on a neighborhood of $\xi^-$. The compactness of $\PP^-$ then leads to a uniform constant $R'>0$ as in the statement of the lemma.

\end{proof}

We now prove \Cref{rec-reduction}.

\smallskip
\begin{proof}[Proof of \Cref{rec-reduction}]

We first make preparations to replace later the term $\xi_{T^nb}$ by $b_{n+1}\dots b_{n+k_{n}}\xi_{n}$ where $k_{n}\geq0$ is a large integer and $\xi_{n}$ is a random point on the flag variety $\PP$. 
Let $R'>0$ as in \Cref{k0}, set $R=R'+1$. As we saw in the proof of \Cref{k0}, there exists a constant $\delta>0$ such that for all $(\xi^-, \xi')\in \FF^+$, all $\xi \in \PP$ with $d(\xi', \xi)\leq \delta$, one has 
$$ d(e, F(\xi^-, \xi'))\leq R'\implies d(e, F(\xi^-, \xi))\leq R$$
Choose a sequence $(k_{n})\in \N^{\N^\star}$ as in \Cref{remp}. 
\bigskip

We now proceed to the proof.  Let $L\subseteq X$ be as in \Cref{rec-tn} and fix a vector $x\in X$ such that for almost every $b\in B$, the set $\mathscr{N}_{x,b}:=\{n\geq 0, \,xa_{t_{n}(b)}\in L\}$ has infinite cardinal.  Define  by induction a sequence of stopping times $\tau_{i} : B\rightarrow \N\cup \{\infty\}$ setting
$$\left\{
    \begin{array}{ll}
     \tau_{1}(b):= \inf \{n \geq 0, \,n\in \mathscr{N}_{x,b}\} \\
     \\
      \tau_{i+1}(b):=\inf\{n\geq \tau_{i}(b) +k_{\tau_{i} (b)}+1, \,  n\in \mathscr{N}_{x,b}\} 
    \end{array}
\right.$$

Given some integers $i_{1}>i_{0}\geq 0$, one has by the Markov property and \Cref{k0}
\begin{align*}
(\frac{1}{3})^{i_{1}-i_{0}+1}\geq \beta\otimes \nu^{\otimes \N^\star}_{\PP} \{&(b, (\xi_{i})) \in B\times \PP, \,\, \forall i \in \llbracket i_{0},i_{1} \rrbracket,\,\, \\
&d(e, \,F(b^{-1}_{\tau_{i}(b)}\dots b^{-1}_{1}\xi^-_{0},    b_{\tau_{i}(b)+1}\dots b_{\tau_{i}(b)+k_{\tau_{i}(b)}}\xi_{i} ))>R'\} 
\end{align*}
Letting $i_{1}$ go to $+\infty$, we deduce that for $\beta$-almost every $b\in B$, there exists $i\geq i_{0}$ such that 
$$ d(e, \,F(b^{-1}_{\tau_{i}(b)}\dots b^{-1}_{1}\xi^-_{0},    b_{\tau_{i}(b)+1}\dots b_{\tau_{i}(b)+k_{\tau_{i}(b)}}\xi_{i})) \leq R' $$

As $i_{0}$ can be chosen arbitrarily large, we obtain that for almost-every $b\in B$, almost every $(\xi_{i})\in \PP^{\N^\star}$,  there exists infinitely many  integers $i\geq 0$ such that 
\begin{align}\label{eq17}
d(e, \,F(b^{-1}_{\tau_{i}(b)}\dots b^{-1}_{1}\xi^-_{0},    b_{\tau_{i}(b)+1}\dots b_{\tau_{i}(b)+k_{\tau_{i}(b)}}\xi_{i} ))\leq R' 
\end{align}

But our choice of $(k_{n})$ guarantees that for large enough $i\geq0$, 
\begin{align}\label{eq18}
d(b_{\tau_{i}(b)+1}\dots b_{\tau_{i}(b)+k_{\tau_{i}(b)}}\xi_{i},  \xi_{T^{\tau_{i}(b)}} )\leq \delta 
\end{align}

By (\ref{eq17}),  (\ref{eq18})  and the definition of $\delta$, we can conclude : for   almost-every $b\in B$, there exists infinitely many  integers $i\geq 0$ such that 
$$ d(  e, \, F(b^{-1}_{\tau_{i}(b)}\dots b^{-1}_{1}\xi^-_{0},  \xi_{T^{\tau_{i}(b)}}) \leq R $$

This finishes the proof of \Cref{rec-reduction}, yielding  \Cref{rec}.

\end{proof}

\bigskip
\subsubsection{Ergodicity}

We now prove the ergodicity of the $\mu$-walk on $X$. 

\bigskip

\begin{lemme}\label{erg}
The $\mu$-walk on $X$ is ergodic. 
\end{lemme}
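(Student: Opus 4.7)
Since the $\mu$-walk is recurrent by \Cref{rec} and preserves the Haar measure $m$ on $X$, the Chacon--Ornstein ratio ergodic theorem applies to the Markov operator associated to $\mu$: for any nonnegative $\phi_+,\phi_- \in L^1(X,m)$ with $\phi_->0$,
\begin{equation*}
R_n(x, b) := \frac{\sum_{i=0}^{n-1} \phi_+(xb_1 \cdots b_i)}{\sum_{i=0}^{n-1} \phi_-(xb_1 \cdots b_i)} \xrightarrow[n\to\infty]{} \frac{\mathbb{E}[\phi_+ \mid \mathcal{I}](x)}{\mathbb{E}[\phi_- \mid \mathcal{I}](x)} \quad \text{for $m$-a.e.\ $x$, $\beta$-a.e.\ $b$,}
\end{equation*}
where $\mathcal{I}$ is the $\sigma$-algebra of $\Gamma_\mu$-invariant Borel subsets of $X$. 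The walk is ergodic precisely when $\mathcal{I}$ is trivial, i.e.~when this limit equals the deterministic constant $\int\phi_+\,dm / \int\phi_-\,dm$ for every admissible pair $(\phi_+,\phi_-)$.

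The plan is to compare $R_n$ with its flow analogue and invoke the ergodicity of the geodesic flow. By the Hopf ratio ergodic theorem applied to the recurrent ergodic flow $(a_t)_{t\in\R}$ (Fact 2),
\begin{equation*}
\frac{\int_0^T \phi_+(y a_t)\,dt}{\int_0^T \phi_-(y a_t)\,dt} \xrightarrow[T\to\infty]{} \frac{\int\phi_+\,dm}{\int\phi_-\,dm} \quad \text{for $m$-a.e.\ $y\in X$.}
\end{equation*}
Specialising $y = xk_\infty(b)$, which is generic for $m$-a.e.\ $x$ and $\beta$-a.e.\ $b$, I would show that $R_n(x,b)$ tends to the same constant, with $T \approx \lambda_\mu n$ in agreement with the asymptotics $t_n(b)\sim n\lambda_\mu$ guaranteed by the law of large numbers for the Cartan projection.

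The comparison between $R_n(x,b)$ and the flow ratio splits into two steps. First, \Cref{distform} (bounded deviations) ensures that for $\phi_\pm$ continuous of compact support, the values $\phi_\pm(xb_1\cdots b_i)$ and $\phi_\pm(xk_\infty(b)a_{t_i(b)})$ differ by at most the uniform-continuity modulus of $\phi_\pm$ on a set of indices of density $\geq 1 - \varepsilon$. Second, one must relate the discrete Cartan-projection sum $\sum_{i<n}\phi_\pm(xk_\infty(b)a_{t_i(b)})$ to the flow-time integral $\lambda_\mu^{-1}\int_0^{\lambda_\mu n}\phi_\pm(xk_\infty(b)a_t)\,dt$; here \Cref{ren-lem2} provides an upper bound on the discrete-to-integral ratio, and \Cref{ren-lem1} the matching qualitative lower bound via the fact that the Cartan projection hits every set containing arbitrarily large reals.

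The principal obstacle is this second step. The full renewal theorem for the Cartan projection (equation~\eqref{eq5}) would render the translation immediate, but it is not known under merely a first moment assumption. Since only a ratio limit -- rather than a pointwise renewal equality -- is required, a careful interplay of \Cref{ren-lem1} and \Cref{ren-lem2}, applied to a family of test functions $\phi_\pm$ chosen to be bump functions concentrated near Lebesgue density points of $A$ and of $A^c$ for a hypothetical non-trivial $\Gamma_\mu$-invariant set $A$, should force the walk ratio to match the flow ratio, hence the triviality of $\mathcal{I}$.
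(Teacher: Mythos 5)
Your route is genuinely different from the paper's, and it has a gap that you yourself flag; it is a real one, not a technicality. The paper proves the lemma with no renewal input at all. It uses Zariski density of $\Gamma_\mu$ to extract a loxodromic element $g_0=ma_c$ with $m\in K$ commuting with $a_c$ (\cite[Prop.~6.11]{BQRW}), and shows in \Cref{lox} that $g_0$ alone acts conservatively and ergodically on $X$: conservativity follows from recurrence of the geodesic flow, and ergodicity from Hopf's argument applied to the cyclic action --- the ratio-limit function $\Phi_{f,p}$ is invariant along the horospherical subgroups $U$ and $U^-$ of $g_0$ since $d(xug_0^k,xg_0^k)\to0$, and $U,U^-$ generate $G$. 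Ergodicity of the full $\Gamma_\mu$-action is then immediate. Nothing about how the walk samples the Cartan projection enters.

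Your plan instead requires converting the discrete sum $\sum_{i<n}\phi_\pm(xk_\infty(b)a_{t_i(b)})$ into a flow integral, and this needs two-sided control of the asymptotic visit frequency of $(t_i(b))_i$ to translated intervals --- exactly the renewal theorem~\eqref{eq5}, which the paper explicitly states is open under a first-moment hypothesis alone. \Cref{ren-lem1} gives only almost-sure recurrence (the Cartan projection hits arbitrarily distant translates of one fixed large interval), and \Cref{ren-lem2} gives only an upper bound on expected occupation times; neither, nor both together, yields a lower bound on the rate of visits, and without that you cannot rule out that the denominator sum grows strictly slower than the flow integral along a subsequence, breaking the ratio comparison. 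Choosing bump test functions near density points of a hypothetical invariant set does not circumvent this: the obstacle is the rate of visits, not where they land. A secondary unaddressed point is that \Cref{distform} controls $d(b_1\cdots b_i,k_\infty(b)a_{t_i(b)})$ only on a set of indices of density $\geq1-\varepsilon$, whereas the Chacon--Ornstein ratio uses the full Birkhoff sums; for $L^1$ test functions the contribution of the uncontrolled $\varepsilon$-fraction of times (where the walk may be arbitrarily far from the flat) is not bounded in terms of the controlled part, and dismissing it would require its own argument.
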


\bigskip

The key idea is that the subgroup $\Gamma_{\mu}$ generated by the support of $\mu$ must contain loxodromic elements, whose action on $X$ is (almost) conjugate to the geodesic flow, hence ergodic.  Recall that an element $g_{0}\in G$ is \emph{loxodromic} if it can be written, up to conjugation,  as  $g_{0}=ma_{c}$ where $m\in K$, $c>0$, and $ma_{c}=a_{c}m$  (see also \cite[Section 6.10]{BQRW}).

\begin{lemme}\label{lox}
The action of a loxodromic element $g_{0}$ on $X$ is conservative ergodic for the Haar measure. 
\end{lemme}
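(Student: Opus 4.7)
The plan is to first reduce $g_0$ by conjugation in $G$ (which preserves Haar measure and all properties at stake) to the standard form $g_0 = ma_c$ with $m \in M := Z_K(\ka)$, $c > 0$, so that $g_0^n = m^n a_{nc}$ and $g_0$ commutes with both the geodesic flow $(a_t)_{t\in\R}$ and the right $M$-action on $X$. Set $M_0 := \overline{\langle m\rangle}$, a compact abelian subgroup of $M$. The proof then splits into conservativity and ergodicity.

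For conservativity, I would observe that $xg_0^n = xm^n a_{nc}$ is a translate of the flow point $xa_{nc}$ by $m^n \in M_0$. Given a relatively compact $F \subseteq X$ of positive measure, the compact thickening $FM_0$ catches $xg_0^n$ whenever $xa_{nc}$ is close to $F$. Since the flow is recurrent, $\{t\geq 0 : xa_t\in F\}$ is unbounded for a.e. $x$; a renewal-type argument along the arithmetic progression $c\N$, in the spirit of \Cref{ren-lem1} applied to the deterministic sequence $S = c\N$, would then produce infinitely many $n$ with $xa_{nc}$ close to $F$, giving recurrence of the $g_0$-iterates to a precompact neighborhood of $FM_0$.

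Ergodicity is the heart of the argument. I would take a $g_0$-invariant measurable set $A\subseteq X$ and Fourier-decompose $\mathbf{1}_A$ along the $M_0$-action: $\mathbf{1}_A=\sum_{\chi\in \widehat{M_0}}f_\chi$ with $f_\chi(x):=\int_{M_0}\mathbf{1}_A(xh)\overline{\chi(h)}\,dh$, so that $f_\chi(xh)=\chi(h)f_\chi(x)$ for $h\in M_0$. Since $g_0$ commutes with $M_0$, each $f_\chi$ inherits $g_0$-invariance; combining this with the $\chi$-equivariance and the commutation $xma_c=(xa_c)m$ (which uses that $M$ centralizes $\ka$), one obtains $f_\chi(xa_c)=\overline{\chi(m)}f_\chi(x)$. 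Thus $f_\chi$ is a bounded measurable $a_c$-eigenfunction with eigenvalue $\overline{\chi(m)}$. Invoking weak mixing of the flow $(a_t)$ on $X$ (which in the recurrent ergodic rank-one setting holds by Howe--Moore in the finite volume case, and by the rank-one mixing results of Babillot and Rudolph in infinite volume), the time-$c$ map $a_c$ admits no non-trivial bounded measurable eigenfunctions, so $f_\chi\equiv 0$ as soon as $\chi(m)\neq 1$. Since $m$ is dense in $M_0$ by construction, $\chi(m)=1$ forces $\chi$ trivial, so $\mathbf{1}_A=f_1$ is $M_0$-invariant; it descends to an $a_c$-invariant bounded function on the quotient $X/M_0$, which is constant by ergodicity of $a_c$ there (again from mixing). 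Hence $A$ is null or conull, and $g_0$ is ergodic.

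The hard part will be precisely invoking weak mixing of the flow in the infinite-volume regime: in finite volume this is standard Howe--Moore, but in infinite volume it requires the rank-one specific input that ergodicity of the geodesic flow implies mixing. An alternative route avoiding this spectral step would be a direct Hopf-type argument along the stable/unstable foliations of the flow, in the spirit of \cite{Kai}, exploiting that $g_0$ acts on these foliations in a controlled contracting/expanding manner; either way, some substantial input beyond what has been set up so far in the paper will be needed.
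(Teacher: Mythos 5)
Your proof is correct in outline but takes a genuinely different route from the paper for the ergodicity step, and the route you choose imports substantially heavier machinery than the problem requires. The paper's argument is the one you mention at the end as an ``alternative'': it applies the Hopf ratio ergodic theorem to the (already established) conservative transformation $g_{0}$, obtains for suitable $f,p$ the a.e.\ limit $\Phi_{f,p}=\mathbb{E}_{p\lambda}(f/p\,|\,\mathcal{I})$, and shows $\Phi_{f,p}$ is a.e.\ constant by proving it is invariant along the stable and unstable horospherical subgroups $U,U^{-}$. The only dynamical input is the elementary contraction $d(xug_{0}^{k},xg_{0}^{k})=d(a_{-kc}ua_{kc},e)\to 0$ for $u\in U$ (using a metric that is left $G$-invariant and right $K$-invariant, so the $m^{k}$ factor drops out), together with the fact that $U\cup U^{-}$ generates $G$. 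This is self-contained and does not presuppose any mixing.

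Your Fourier decomposition along $M_{0}=\overline{\langle m\rangle}$ is clean and the algebra is right: $f_{\chi}$ is an $a_{c}$-eigenfunction with eigenvalue $\overline{\chi(m)}$, and since $M_{0}=\overline{\langle m\rangle}$, $\chi(m)=1$ forces $\chi\equiv 1$. But to conclude $f_{\chi}=0$ for $\chi\neq 1$ you need more than ``weak mixing of the flow'': you need that the \emph{time-$c$ map} has no nonconstant $L^{\infty}$ eigenfunctions, which is a genuinely stronger statement about the spectrum of the one-parameter group (it does follow from continuity of the spectral measure on $L^{2}_{0}$, but that equivalence deserves a line of justification rather than the phrase ``weak mixing''). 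More importantly, in infinite volume this spectral input is itself a deep theorem (Rudolph for $\mathrm{PSL}_{2}(\R)$, Babillot in general, with non-arithmeticity of the length spectrum as a hypothesis and phrased for the Bowen--Margulis--Sullivan measure, which one must then identify with Liouville/Haar in the divergence case), whereas the paper needs only ergodicity, not mixing, of $g_{0}$. Using a mixing theorem to prove ergodicity is backwards in terms of depth, and you correctly flag this yourself. For conservativity, the paper's (terse) claim is that conservativity of the flow passes to the time-$c$ map $a_{c}$ and then to $g_{0}=ma_{c}$ by compactness of $M_{0}$; your proposed appeal ``in the spirit of \Cref{ren-lem1}'' is a red herring, since that proposition concerns the Cartan projection of the random walk and has nothing to do with the deterministic flow; the clean argument is simply that $xg_{0}^{n}=xa_{nc}m^{n}$ lies in $UM_{0}$ whenever $xa_{nc}$ does, and conservativity of $a_{c}$ is a standard consequence of conservativity of the flow.

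In short: your approach is valid modulo the external mixing input, and it would in fact prove the stronger statement that $g_{0}$ is mixing, but it trades a one-page elementary Hopf argument for a dependence on a separate and considerably harder theorem. The paper's route is both more elementary and more in line with the toolkit it has already set up.
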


In this statement the conservativity of $g_{0}$ means that for almost-every point $x\in X$, the sequence $(xg^n_{0})_{n\geq 0}$ subconverges to $x$ (see \cite[Section 1.1]{Aar} for more details). 

\begin{proof}[Proof of \Cref{lox}]
One can assume that $g_{0}=ma_{c}$ where $m\in K$, $c>0$, and $ma_{c}=a_{c}m$.  In particular, the recurrence of the geodesic flow on $X$ implies the conservativity of $g_{0}$. Its  ergodicity  follows by standard arguments (given for the geodesic flow  in \cite[Theorem 7.4.3]{Aar}). We  explain them briefly. Denote by $\lambda$ a Haar measure on $X$, let  $f, p\in L^1(X,\lambda)$ with $p>0$, $\lambda(p)=1$. Hopf Ergodic Theorem \cite[2.2.5]{Aar} and the conservativity of $g_{0}$ imply the almost-sure convergence  :
$$\frac{\sum_{k=0}^{n-1}f(. g^k_{0})}{\sum_{k=0}^{n-1}p(. g^k_{0}) }\,\underset{n\to \pm \infty}{\longrightarrow}\, \underbrace{\mathbb{E}_{p\lambda}(\frac{f}{p}|\mathcal{I})}_{\Phi_{f,p}}$$
where $ \mathbb{E}_{p\lambda}(\frac{f}{p}|\mathcal{I})$ is the conditional expectation of 
$f/p$ for the probability measure $p\lambda$  and with respect to the $\sigma$-algebra  $\mathcal{I}$ of the  $\lambda$-a.e. $g_{0}$-invariant subsets of $X$. We need to show this $\sigma$-algebra is $\lambda$-trivial, which amounts to say that for every choice of $f,p$, the limit $\Phi_{f,p}$ is $\lambda$-a.e. constant. Endow  $G$  with a Riemannian metric that  is $G$-left invariant and $K$-right invariant, and equip $X$ with the quotient metric. Arguing as in \cite[7.4.3]{Aar} we can assume that $p$, then $f$, are regular enough so that $\Phi_{f,p}$ is constant along the stable or unstable manifolds of $g_{0}$. More precisely, denote by $U\subseteq G$ (resp. $U^{-}$)  unipotent connected subgroup of   $G$ whose Lie algebra is $\mathfrak{u}$ (resp. $\mathfrak{u}^-$). Then for $x\in X$, $u\in U$,  
$$d(xug_{0}^k, xg_{0}^k) =d(xua^k_{c}, xa^k_{c})\underset{k\to + \infty}{\longrightarrow}0$$
and the same goes for $U^-$ and $k\to-\infty$. By our choice of  $p$ and $f$, this yields for every $u \in U\cup U^-$ the almost-sure equality 
\begin{align*}
\Phi_{f,p}(. u)=\Phi_{f,p} \tag{$\lambda$-a.e.}
\end{align*}
As $U$ and $U^-$ together generate $G$, the $\lambda$-a.e. invariance  of $\Phi_{f,p}$ by $U$ and $U^-$ implies its $\lambda$-a.e. invariance by a countable dense subset of  $G$, hence by $G$. The map  $\Phi_{f,p}$ is then  $\lambda$-a.e. constant. 
\end{proof}

\bigskip
\begin{proof}[Proof of \Cref{erg}]
The subroup $\Gamma_{\mu}$ generated by  the support of $\mu$ is Zariski-dense in $G$, so it must contain some loxodromic element $g_{0}$ (see \cite[Prop. 6.11]{BQRW}). By \Cref{lox}, the action of $g_{0}$ on $X$ is ergodic, hence so is  the action of $\Gamma_{\mu}$. This proves the ergodicity of the $\mu$-walk on $X$. 

\end{proof}

\newpage
\subsection{Transience}

In this section we prove \Cref{th-trans} :

\begin{prop-trans} 
 If the Green functions of the geodesic flow $G(.,F)_{F\in \mathscr{P}^C(X) }$ are all locally integrable, then it is also the case of   the Green functions $G_{\mu}(.,F)_{F\in \mathscr{P}^C(X) }$ of the $\mu$-walk on $X$. 
 \end{prop-trans}

\begin{proof}

Let $E,F\subseteq X$ be compact $K$-invariant subsets of $X$ and $\lambda$ a Haar measure on $X$. We can write 

\begin{align}\label{eq19}
\int_{E}G_{\mu}(x,F)   \,d\lambda(x)&=  \int_{E}\int_{B}\sum_{n\geq 0}1_{F}(xb_{1}\dots b_{n}) \,d\beta(b)d\lambda(x) \nonumber \\
&= \int_{E}\int_{B}\sum_{n\geq 0}1_{F}(x a_{t_{n}(b)}) \,d\beta(b)d\lambda(x) 
\end{align}
where the last inequality comes from the $K$-invariance of $E$, $F$ and $\lambda$. 

Let $F'\subseteq X$ be a compact set such that $\bigcup_{t\in [0,1]}Fa_{t} \subseteq F'$. Then 
\begin{align}\label{eq20}
1_{F}(xa_{t_{n}(b)})\leq \int_{\R_{+}}1_{F'}(xa_{t})1_{[t_{n}(b), t_{n}(b)+1]}(t) dt 
\end{align}

Combining (\ref{eq19}) and (\ref{eq20}), we obtain 
\begin{align*}
\int_{E}G_{\mu}(x,F)   \,d\lambda(x)&\leq  \int_{E}\int_{B}\int_{\R_{+}}\sum_{n\geq 0}1_{F'}(x a_{t})1_{[t_{n}(b), t_{n}(b)+1]}(t) \,dt d\beta(b)d\lambda(x)\\
&= \int_{E}\int_{\R_{+}}1_{F'}(x a_{t}) [\int_{B}\sum_{n\geq 0}1_{[t_{n}(b), t_{n}(b)+1]}(t) d\beta(b)]  \,dtd\lambda(x)
\end{align*} 

 The term between brackets estimates the average time  spent  by the Cartan projection  of a $\mu$-trajectory on $G$ in the interval $[t-1, t]$. By \Cref{ren-lem2} it is less than a constant  $R\in ]0,+\infty[$ that  does not depend on  $t$  but only on the initial data $(G,K, \mathfrak{a}^+,  \mu)$. Finally, we get

\begin{align*}
\int_{E}G_{\mu}(x,F)   \,d\lambda(x)&\leq R \int_{E}\int_{\R_{+}}1_{F'}(xa_{t})   \,dtd\lambda(x)\\
&= R \int_{E}G(x,F')   \,d\lambda(x)\\
&<+\infty
\end{align*}

\end{proof}

\bigskip
 
\section{Appendix : Hopf dichotomy for the geodesic flow}
\label{Hopfdich}
In this appendix, we justify the following fact used in  \Cref{rec-crit}. The notations are those of \Cref{rec-crit}. In particular $G$ is a connected simple real Lie group of rank one, $\Lambda\subseteq G$ is a discrete subgroup, and $X=\Lambda\backslash G$.

\bigskip

\noindent{\bf Fact 2}
\emph{The geodesic flow $(a_t)_{t\in \R}$ on $X$ is  either recurrent  ergodic, or  transient with  locally integrable Green functions.}

\bigskip
This result is already known but usually stated differently using the notion of Poincaré series, as in \Cref{Kaimth} below. We explain here why Fact 2 is a reformulation of \Cref{Kaimth}. The point is that the Poincaré series of $\Lambda$ at the maximal exponent expresses, up to a multiplicative constant, the integral of the Green function of the geodesic flow on a $K$-orbit in $X$ (\Cref{poinc2}).  

\bigskip

Recall first the notion of Poincaré series. Endow $G/K$ with its standard structure of symmetric space, i.e. with its unique left $G$-invariant Riemannian metric that coincides with the Killing form on $T_{K/K}G/K\simeq \mathfrak{k}^\perp$. Write $d$ the corresponding distance map on $G/K$.  Given points $z_{1},z_{2}\in G/K$, and a positive real number $s>0$, the Poincaré series of  $\Lambda$ at $(z_{1},z_{2},s)$ is defined as 
$$\mathfrak{p}(z_{1},z_{2},s)=\sum_{g\in \Lambda}e^{-sd(z_{1},gz_{2})}$$

Observe that the convergence or divergence of the series $\mathfrak{p}(z_{1},z_{2},s)$ does not depend on the points $z_{1},z_{2}$ but only on the parameter $s$.  It is then natural to introduce the number 
$$\delta_{\Lambda}=\inf\{s>0, \,\mathfrak{p}(z_{1},z_{2},s)<\infty\} $$
known as  the critical exponent of $\Lambda$.  As $\Lambda$ is discrete,  $\delta_{\Lambda}$ is less or equal to  the exponential growth rate of the volume of balls in  $G/K$, given by
$$\delta_{G}=\lim_{R\to +\infty} \frac{1}{R}\log(V_{R})$$
where $V_{R}>0$ is the Riemannian volume of a ball of radius $R$ in $G/K$.
The case of equality $\delta_{\Lambda}=\delta_{G}$ expresses that the orbits of $\Lambda$ are not too sparse in $G/K$. As we see below, it is  a necessary condition for the geodesic flow on $X$ to be recurrent, but it is not sufficient in general. For instance, if $G=PSL_{2}(\R)$ and $X/K$ is a $\Z^d$-cover of a compact hyperbolic surface with $d\geq3$, then the geodesic flow on $X$ is transient \cite{Rees81} but $\delta_{\Lambda}=\delta_{PSL_{2}(\R)}=1$ \cite{CDST18}.  The following result claims that we can strengthen slightly the condition that $\delta_{\Lambda}$ is maximal to characterize the situations of recurrence/transience. It is usually called Hopf-Tsuji-Sullivan  Theorem, but it is actually due to Kaimanovich in the context of rank-one symmetric spaces. 

\begin{th.}\emph{\cite[Theorem 3.3]{Kai}} \label{Kaimth}
The geodesic flow $(a_{t})$ on $X$ is  recurrent ergodic  if and only if $\mathfrak{p}(z_{1},z_{2},\delta_{G})=+\infty$, and is  transient otherwise.
\end{th.}
\bigskip

\noindent{\bf Remark.} If  $G=Sp(1,m)$ for some $m\geq2$, or $G=F^{-20}_{4}$, then  \cite[Theorem 4.4]{Cor} claims that a discrete subgroup $\Lambda\subseteq G$ of infinite covolume satisfies $\delta_{\Lambda}<\delta_{G}$. In particular, by \Cref{Kaimth},  the geodesic flow on $X=\Lambda \backslash G$ is transient. According to  \Cref{th-rec/trans}, the same holds true for walks on $X$ given by a probability measure $\mu$ on $G$ with finite first moment and $\Gamma_{\mu}$ Zariski-dense  in $G$.   

\bigskip
We now explain why Fact 2 is a reformulation of \Cref{Kaimth}. We freely identify any subset of $G/K$ to a right $K$-invariant subset of $G$. Given $z\in G/K$, $\varepsilon>0$, we  denote by $B(z,\varepsilon)$ the ball of center $z$ and radius $\varepsilon$ in the symmetric space $G/K$. We also set $\lambda_{K}$ the Haar probability measure on $K$. Finally, given $s,t \geq 0$, $c>1$ we write $s=c^{\pm 1}t$ if $s\in [c^{-1} t,ct]$.  

\begin{lemme}\label{poinc1}
For all $\varepsilon >0$, there exists $c>1$ such that for $z_{1},z_{2}\in G/K$, $g_{1}\in z_{1}$, 
\begin{align}\label{eq21}
\int_{K}G(g_{1}k, B(z_{2},\varepsilon)) d\lambda_{K}(k) = c^{\pm 1} e^{-\delta_{G}d(z_{1}, z_{2})} 
\end{align}
\end{lemme}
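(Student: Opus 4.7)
The plan is to rewrite the integrand $\int_K G(g_1k,B(z_2,\varepsilon))\,d\lambda_K(k)$ as an integral on $G/K$ in polar coordinates centered at $z_1$, and then read the exponential bound off the asymptotics of the radial volume Jacobian.

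For the first step, the $KA^+K$-decomposition provides a surjective map $K\times\R_+\to G/K$, $(k,t)\mapsto g_1ka_tK$, which factors through $K/M$ (with $M=Z_K(\mathfrak{a})$) since $M$ stabilises $a_tK$. On $G/K\smallsetminus\{z_1\}$ the Riemannian volume decomposes as $d\mathrm{vol}=d\sigma_t\,dt$, where $\sigma_t$ is the induced area on the sphere $S(z_1,t)$; moreover $K$ acts transitively by isometries on each $S(z_1,t)$, so the push-forward of $\lambda_K$ under $k\mapsto g_1ka_tK$ is $\sigma_t/A(t)$, with $A(t):=\sigma_t(S(z_1,t))$ a function of $t$ alone by $G$-invariance. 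Combining these facts, for every nonnegative measurable $f:G/K\to\R$,
\begin{equation*}
\int_K\int_0^{\infty}f(g_1ka_tK)\,dt\,d\lambda_K(k)\;=\;\int_{G/K}f(y)\,\frac{d\mathrm{vol}(y)}{A(d(z_1,y))}.
\end{equation*}
Taking $f=1_{B(z_2,\varepsilon)}$ reduces \eqref{eq21} to estimating $\int_{B(z_2,\varepsilon)}d\mathrm{vol}(y)/A(d(z_1,y))$.

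For the second step, I would invoke $V_R=\int_0^R A(t)\,dt$ and $\delta_G=\lim\frac1R\log V_R$, combined with the explicit rank-one formula $A(t)\propto\prod_{\alpha\in\Phi^+}\sinh^{m_\alpha}\!\bigl(\alpha(tv_0)\bigr)$, to obtain $A(t)\asymp e^{\delta_G t}$ uniformly on $[\varepsilon,+\infty)$. For $d(z_1,z_2)\geq 2\varepsilon$, the triangle inequality gives $d(z_1,y)=d(z_1,z_2)+O(\varepsilon)$ for $y\in B(z_2,\varepsilon)$, hence $1/A(d(z_1,y))\asymp e^{-\delta_G d(z_1,z_2)}$ uniformly on the ball; integrating and using that $\mathrm{vol}(B(z_2,\varepsilon))=\mathrm{vol}(B(eK,\varepsilon))$ depends only on $\varepsilon$ (again by $G$-invariance) gives the two-sided bound \eqref{eq21} in this regime. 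The complementary case $d(z_1,z_2)<2\varepsilon$ is treated by compactness: both sides are continuous positive functions of $d(z_1,z_2)\in[0,2\varepsilon]$, and the left-hand side is bounded above by $\int_K\int_0^{3\varepsilon}dt\,d\lambda_K(k)=3\varepsilon\lambda_K(K)$ via the identity displayed above (since $B(z_2,\varepsilon)\subseteq B(z_1,3\varepsilon)$).

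The only real obstacle is the bookkeeping that the exponential rate appearing in $A(t)\asymp e^{\delta_G t}$ is exactly $\delta_G$ as defined via the Poincar\'e series, rather than some rescaled variant. This follows from the compatibility between the radial Jacobian, the volume growth rate and the critical exponent encoded in the identity $V_R=\int_0^R A(t)\,dt$, but needs a moment of care with the normalisations of $d$, $v_0$ and the Killing form.
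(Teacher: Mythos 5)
Your proposal is correct and follows essentially the same route as the paper: the paper's proof is exactly the push-forward/polar-coordinate computation you describe, phrased via Helgason's integration formula $\int_{G/K}f\,dm = r\int_{K\times\R^+}f(ka_tK)\,\sigma(t)\,d\lambda_K(k)\,dt$ with $\sigma(t)=\prod_{\alpha\in\Phi^+}\sinh^{\dim\mathfrak{g}_\alpha}(\alpha(tv_0))\sim r'e^{\delta_G t}$, applied to $f=1_{B(z_2,\varepsilon)}$, and the step $d(z_1,y)=d(z_1,z_2)+O(\varepsilon)$ is exactly how the paper trades $e^{\delta_G t}$ for $e^{\delta_G d(z_2,eK)}$. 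The one thing the paper leaves implicit, and which you address more carefully, is the regime $d(z_1,z_2)$ small (the paper simply writes ``we can assume $d(z_1,z_2)>1+\varepsilon$''); your compactness argument there is sound, since the left-hand side is a continuous function of $d(z_1,z_2)$, bounded above by $3\varepsilon$ and bounded below away from $0$ on $[0,2\varepsilon]$.
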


\begin{proof}
Let $m$ be the Liouville measure on the symmetric space $G/K$. According to Helgason's book \cite[Theorem 5.8]{Hel}, there exists a constant $r>0$ such that for all non-negative function measurable functions $f : G/K\rightarrow \R_{+}$, 
\begin{align}\label{eq+}
\int_{G/K}f \,dm =r\int_{K\times \R^+} f(ka_{t} K) \sigma(t)\,d\lambda_{K}(k)dt 
\end{align}
where $\sigma(t)=\Pi_{\alpha\in \Phi^+} \sinh(\alpha (t v_{0}))^{\dim \mathfrak{g}_{\alpha}}$, with $v_{0}\in \mathfrak{a}^+$   unique vector of norm 1 (see beginning of \ref{rec-crit}). We must have the equivalence $\sigma(t)\sim r'e^{\delta_{G}t}$  for some $r'\in \{1/2, 1/4\}$ as $t$ goes to $+\infty$, and in particular, there exists a constant $R>0$ such that for all $t>1$, 
$$\sigma(t)= R^{\pm 1} e^{\delta_{G}t} $$ 

Let us now check (\ref{eq21}). We can assume that $g_{1}=e$ and $d(z_{1}, z_{2})> 1+\varepsilon$. 
Specifying $f$ in (\ref{eq+}) to be the characteristic function of the ball $B(z_{2}, \varepsilon)$ in $G/K$, we obtain

\begin{align*}
V_{\varepsilon}&= r\int_{K\times \R^+} 1_{B(z_{2}, \varepsilon)}(ka_{t} K) \sigma(t)\,d\lambda_{K}(k)dt\\
&= rR^{\pm 1}\int_{K\times \R^+} 1_{B(z_{2}, \varepsilon)}(ka_{t} K) e^{\delta_{G}t}\,d\lambda_{K}(k)dt\\
&= r(Re^\varepsilon)^{\pm 1} e^{\delta_{G}d(z_{2},0)}\int_{K}G(k, B(z_{2},\varepsilon)) d\lambda_{K}(k) 
\end{align*}
and finally, 
$$\int_{K}G(k, B(z_{2},\varepsilon)) d\lambda_{K}(k) = V_{\varepsilon} r^{-1}(Re^\varepsilon)^{\pm 1} e^{-\delta_{G}d(z_{2},0)}$$
\end{proof}

We now use the previous lemma to show that the Poincaré series of $\Lambda$  with parameter $s=\delta_{G}$ expresses the average of the Green function of the geodesic flow on a $K$-orbit in $X$.  Given a point $p\in X/K$ and $r>0$, denote by $B(p, r)$ the ball of center $p$ and radius $r$ in $X/K$ for the quotient metric, and define $r_{X}(p)$ as the supremum of the real numbers $r>0$ such that the preimage of $B(p, r)$ in $G/K$ is a collection of disjoint open balls (on wich $\Lambda$ acts transitively). In the case where $\Lambda$ has no torsion, the action of $\Lambda$ on such a collection of balls is simply transitive, and $r_{X}(p)$ is called the \emph{injectivity radius} of $X$ at $p$. In general, we only know that the action of $\Lambda$ has \emph{finite} stabilizer (by discreteness), and write $N_{X}(p)\in \N\smallsetminus\{0\}$ its cardinal.  

\begin{lemme} \label{poinc2}
 Let $\varepsilon>0$ and $N\in \N\smallsetminus\{0\}$. There exists a constant $C>1$ such that for any $p_{1}, p_{2}\in X/K$ with $r_{X}(p_{2})>\varepsilon$ and $N_{X}(p_{2})\leq N$, and any  $z_{1}, z_{2}\in G/K$, $x_{1}\in X$ such that $p_{1}=\Lambda z_{1}= x_{1}K$, $p_{2}=  \Lambda z_{2}$, one has 
\begin{align*}
 \int_{K}G(x_{1}k, B(p_{2}, \varepsilon))d\lambda_{K}(k)= C^{\pm 1}\mathfrak{p}(z_{1}, z_{2}, \delta_{G})
\end{align*}
\end{lemme}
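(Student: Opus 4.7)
The strategy is to unfold the ball $B(p_{2},\varepsilon)\subseteq X/K$ to a disjoint union of balls in $G/K$, then reduce to \Cref{poinc1} applied to each translate $\gamma z_{2}$.

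First, I would fix a lift $g_{1}\in G$ of $x_{1}$ with $g_{1}K = z_{1}$, and identify the $K$-invariant set $\widetilde{B}_{2}\subseteq X$ corresponding to $B(p_{2},\varepsilon)\subseteq X/K$. The preimage $\pi^{-1}(B(p_{2},\varepsilon))\subseteq G/K$ of this ball under the projection $\pi:G/K\to X/K$ equals $\bigcup_{\gamma\in \Lambda}B(\gamma z_{2},\varepsilon)$. The assumption $r_{X}(p_{2})>\varepsilon$ ensures that the \emph{distinct} balls $B(y,\varepsilon)$ ($y\in \Lambda z_{2}$) are pairwise disjoint; the assumption on stabilizers then gives the pointwise identity
$$\sum_{\gamma\in \Lambda} 1_{B(\gamma z_{2},\varepsilon)}(z)\,=\,N_{X}(p_{2})\cdot 1_{\pi^{-1}(B(p_{2},\varepsilon))}(z),$$
since each distinct point $y\in \Lambda z_{2}$ is reached by exactly $N_{X}(p_{2})$ elements $\gamma\in \Lambda$.

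Next, I would unfold the Green function. For any $k\in K$ and $t\in \mathfrak{a}^{+}$, the condition $x_{1}ka_{t}\in \widetilde{B}_{2}$ is equivalent to $g_{1}ka_{t}K\in \pi^{-1}(B(p_{2},\varepsilon))$, so from the definition of the geodesic Green function,
$$G(x_{1}k,B(p_{2},\varepsilon))\,=\,\frac{1}{N_{X}(p_{2})}\sum_{\gamma\in \Lambda}\int_{\mathfrak{a}^{+}}1_{B(\gamma z_{2},\varepsilon)}(g_{1}ka_{t}K)\,dt.$$
Interpreting each integral on the right as a Green-type quantity on $G$, I integrate over $k\in K$ and apply \Cref{poinc1} (with $g_{1}$ fixed and $z_{2}$ replaced by $\gamma z_{2}$), obtaining a constant $c>1$ depending only on $\varepsilon$ such that
$$\int_{K}G(x_{1}k,B(p_{2},\varepsilon))\,d\lambda_{K}(k)\,=\,\frac{1}{N_{X}(p_{2})}\sum_{\gamma\in \Lambda}c^{\pm 1}\,e^{-\delta_{G}d(z_{1},\gamma z_{2})}.$$

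Finally, I would factor out the implied multiplicative error: since $c^{\pm 1}$ is a uniform interval, the whole sum equals $c^{\pm 1}\,N_{X}(p_{2})^{-1}\,\mathfrak{p}(z_{1},z_{2},\delta_{G})$, and the hypothesis $1\leq N_{X}(p_{2})\leq N$ lets me absorb the $N_{X}(p_{2})$ factor, yielding the desired conclusion with $C=cN$. The main subtlety is the bookkeeping around stabilizers: one must distinguish the sum over $\gamma\in \Lambda$ (which defines the Poincaré series) from the sum over distinct points of the orbit (which is what the disjointness of balls actually gives), and use $N_{X}(p_{2})$ precisely to reconcile the two; the bound $N_{X}(p_{2})\leq N$ is essential to keep $C$ uniform in $p_{2}$.
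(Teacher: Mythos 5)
Your proof is correct and follows essentially the same route as the paper's: unfold $B(p_2,\varepsilon)$ to the $\Lambda$-orbit of balls in $G/K$ with multiplicity $N_X(p_2)$, apply \Cref{poinc1} term by term, and absorb $N_X(p_2)\leq N$ into the final constant. You spell out the pointwise counting identity and the Green-function unfolding somewhat more explicitly than the paper, and you write the exponent with the correct sign $e^{-\delta_G d(z_1,\gamma z_2)}$ (the paper's displayed computation has a stray missing minus sign), but the argument is the same.
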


\begin{proof}
The assumption that $r_{X}(p_{2})>\varepsilon$ means that the preimage in $G/K$ of $B(p_{2}, \varepsilon)$ is the disjoint union $\bigcup_{g\in \Lambda}B(gz_{2}, \varepsilon)$ where each ball appears with multiplicity $N_{X}(p_{2})$. Hence, given $g_{1}\in G$ such that $x_{1}=\Lambda g_{1}$, we can write
\begin{align*}
\int_{K}G(x_{1}k, B(p_{2}, \varepsilon))d\lambda_{K}(k) &= \frac{1}{N_{X}(p_{2})}\sum_{g\in \Lambda}\int_{K}G(g_{1}k, B(gz_{2}, \varepsilon))d\lambda_{K}(k) \\
&= c^{\pm 1} \frac{1}{N_{X}(p_{2})}\sum_{g\in \Lambda} e^{\delta_{G}d(z_{1}, gz_{2})} \\
&=c^{\pm 1} \frac{1}{N_{X}(p_{2})}\mathfrak{p}(z_{1}, z_{2}, \delta_{G})
\end{align*}
if $c>1$ is chosen as in \Cref{poinc1}.
\end{proof}

\bigskip

It is now easy to conclude that Fact 2 is a reformulation of \Cref{Kaimth} : 

\begin{proof}[Fact 2  $\iff$ \Cref{Kaimth}]
We   check that the Poincaré series $\mathfrak{p}(z_{1}, z_{2}, \delta_{G})$ is finite if and only if  the Green functions of the geodesic flow are locally integrable.  Let $z_{2}\in G/K$, $p_{2}=\Lambda z_{2}\in X/K$ its projection on $X/K$, $(\varepsilon,N)\in \R_{>0}\times \N $ such that  $r_{X}(p_{2})> \varepsilon $ and  $N_{X}(p_{2})\leq N $, and write $C>1$ the associated constant of   \Cref{poinc2}.  Let $E\subseteq X$ be a right $K$-invariant compact subset. \Cref{poinc2}, together with the $K$-invariance of $E$ and of the Haar measure $\lambda$ on $X$, implies that
 \begin{align}\label{eq22}
 \int_{E}G(x, B(p_{2}, \varepsilon)) d\lambda(x) &= \int_{E} \int_{K}G(xk, B(p_{2}, \varepsilon)) d\lambda_{K}(k) d\lambda(x)\nonumber\\
 &= \int_{E} C^{\pm 1} \mathfrak{p}(z, z_{2}, \delta_{G})  d\lambda(x) \nonumber\\
 &=  (Ce^{\delta_{G}R})^{\pm 1} \lambda(E)\mathfrak{p}(z_{1}, z_{2}, \delta_{G})  
 \end{align}
where $z\in G/K$ is any lift of $x$ (i.e. satisfies $\Lambda z=xK$), $z_{1}$ is the lift of some fixed arbitrary point in $E$, and $R$ is the diameter of the projection of $E$ in $X/K$. 

The equation (\ref{eq22}) implies that $ G(., B(p_{2}, \varepsilon))$ is locally  integrable if and only if the poincaré series  $\mathfrak{p}(z_{1}, z_{2}, \delta_{G})$ is finite. Notice this is also true if one replaces $B(p_{2}, \varepsilon)$ by any relatively compact subset $F\subseteq X$ with positive measure. More precisely, if $\mathfrak{p}(z_{1}, z_{2}, \delta_{G})<\infty$ then, covering $F$ by a finite number of balls $B(p_{2}, \varepsilon)$ with $\varepsilon<r_{X}(p_{2})$, we infer from above that $ G(., F)$ is locally integrable. Conversely, if  $\mathfrak{p}(z_{1}, z_{2}, \delta_{G})=+\infty$, then by \Cref{Kaimth}, the geodesic flow is recurrent ergodic, and as $F$ has positive measure, we necessarily have $G(.,F)=+\infty$ almost everywhere.

\end{proof}

\bigskip

\bibliographystyle{abbrv}

\bibliography{bibliographie}

\begin{thebibliography}{10}

\bibitem{Aar}
J.~Aaronson.
\newblock {\em {An Introduction to Infinite Ergodic Theory}}.
\newblock Number~50. American Mathematical Soc., 1997.

\bibitem{Anc88}
A.~Ancona.
\newblock Positive harmonic functions and hyperbolicity.
\newblock In {\em Potential Theory Surveys and Problems}, pages 1--23.
  Springer, 1988.

\bibitem{Anc}
A.~Ancona.
\newblock Th{\'e}orie du potentiel sur les graphes et les vari{\'e}t{\'e}s.
\newblock In {\em {\'E}cole d'{\'e}t{\'e} de Probabilit{\'e}s de Saint-Flour
  XVIII-1988}, pages 3--112. Springer, 1990.

\bibitem{Tim-these}
T.~B{\'e}nard.
\newblock {\em Marches al{\'e}atoires sur les espaces homog{\`e}nes de volume
  infini}.
\newblock PhD thesis, ENS Paris - Universit{\'e} Paris Saclay, 2020
  \url{https://www.math.ens.fr/~benard/}.

\bibitem{BQI}
Y.~Benoist and J.-F. Quint.
\newblock Mesures stationnaires et ferm{\'e}s invariants des espaces
  homog{\`e}nes (i).
\newblock {\em Annals of Mathematics}, 174:1111--1162, 2008.

\bibitem{BQII}
Y.~Benoist and J.-F. Quint.
\newblock Stationary measures and invariant subsets of homogeneous spaces (ii).
\newblock {\em Journal of the American Mathematical Society}, 26:659--734,
  2013.

\bibitem{BQIII}
Y.~Benoist and J.-F. Quint.
\newblock Stationary measures and invariant subsets of homogeneous spaces
  (iii).
\newblock {\em Annals of Mathematics}, 178:1017--1059, 2013.

\bibitem{BQRW}
Y.~Benoist and J.~F. Quint.
\newblock {\em Random Walks on Reductive groups}.
\newblock Springer International Publishing, 2016.

\bibitem{BHM}
S.~Blach{\`e}re, P.~Ha{\"\i}ssinsky, and P.~Mathieu.
\newblock Harmonic measures versus quasiconformal measures for hyperbolic
  groups.
\newblock In {\em {Annales Scientifiques de l'Ecole Normale Sup{\'e}rieure}},
  volume~44, pages 683--721, 2011.

\bibitem{BT65}
A.~Borel and J.~Tits.
\newblock Groupes reductifs.
\newblock {\em Publ. Math., Inst. Hautes {\'E}tud. Sci.}, 27:659--755, 1965.

\bibitem{BouLac}
P.~Bougerol and J.~Lacroix.
\newblock {\em Products of random matrices with applications to Schr{\"o}dinger
  operators}.
\newblock Birkh{\"a}user, 1985.

\bibitem{Cor}
K.~Corlette.
\newblock Hausdorff dimensions of limit sets {I}.
\newblock {\em Inventiones mathematicae}, 102(1):521--541, 1990.

\bibitem{CDST18}
R.~Coulon, R.~Dougall, B.~Schapira, and S.~Tapie.
\newblock Twisted patterson-sullivan measures and applications to amenability
  and coverings.
\newblock {\em arXiv preprint arXiv:1809.10881}, 2018.

\bibitem{EskLin}
A.~Eskin and E.~Lindenstrauss.
\newblock Random walks on locally homogeneous spaces.
\newblock {\em
  \url{http://www.math.uchicago.edu/~eskin/RandomWalks/paper.pdf}}, 2018.

\bibitem{EskMar}
A.~Eskin and G.~Margulis.
\newblock Recurrence properties of random walks on homogeneous manifolds.
\newblock {\em Random Walks and Geometry}, pages 431--444, 2004.

\bibitem{Fur63}
H.~Furstenberg.
\newblock Noncommuting random products.
\newblock {\em Transactions of the American Mathematical Society},
  108(3):377--428, 1963.

\bibitem{Gui-Lep}
Y.~Guivarc'h and {\'E}.~Le~Page.
\newblock Spectral gap properties for linear random walks and {Pareto}'s
  asymptotics for affine stochastic recursions.
\newblock {\em Ann. Inst. Henri Poincar{\'e}, Probab. Stat.}, 52(2):503--574,
  2016.

\bibitem{GuiRau86}
Y.~Guivarc'h and A.~Raugi.
\newblock Products of random matrices: convergence theorems.
\newblock {\em Contemp. Math}, 50(31-54):2, 1986.

\bibitem{Hel}
S.~Helgason.
\newblock {\em Groups \& Geometric Analysis: Integral Geometry, Invariant
  Differential Operators and Spherical Functions}.
\newblock Academic press, 1984.

\bibitem{Hopf39}
E.~Hopf.
\newblock Statistik der geod{\"a}tischen {Linien} in {Mannigfaltigkeiten}
  negativer {Kr{\"u}mmung}.
\newblock Ber. {Verh}. {S{\"a}chs}. {Akad}. {Leipzig} 91, 261-304 (1939).,
  1939.

\bibitem{Hopf71}
E.~Hopf.
\newblock Ergodic theory and the geodesic flow on surfaces of constant negative
  curvature.
\newblock {\em Bulletin of the American Mathematical Society}, 77(6):863--877,
  1971.

\bibitem{Kai}
V.~A. Kaimanovich.
\newblock Ergodicity of harmonic invariant measures for the geodesic flow on
  hyperbolic spaces.
\newblock {\em Journal f{\"u}r die reine und angewandte Mathematik},
  1994(455):57--104, 1994.

\bibitem{Kai00}
V.~A. Kaimanovich.
\newblock {The Poisson formula for groups with hyperbolic properties}.
\newblock {\em Annals of Mathematics}, pages 659--692, 2000.

\bibitem{Kes}
H.~Kesten.
\newblock The limit points of a normalized random walk.
\newblock {\em Ann. Math. Statist.}, 41:1173--1205, 1970.

\bibitem{lpsm-cours}
J.~Lacroix.
\newblock {Cha{\^\i}nes de Markov et Processus de Poisson}.
\newblock 2001-2002 $\,$
  \url{https://www.lpsm.paris/cours/dea/telehtml/node51.html}.

\bibitem{Led}
F.~Ledrappier.
\newblock Some asymptotic properties of random walks on free groups.
\newblock {\em Topics in probability and Lie groups: boundary theory},
  28(117-152):7, 2001.

\bibitem{Li}
J.~Li.
\newblock Fourier decay, renewal theorem and spectral gaps for random walks on
  split semisimple lie groups.
\newblock {\em arXiv preprint arXiv:1811.06484}, 2018.

\bibitem{Paul-riem}
F.~Paulin.
\newblock {Cours de seconde ann{\'e}e de mast{\`e}re : Groupes et
  g{\'e}om{\'e}tries}.
\newblock 2013-2014 $\,$
  \url{https://www.imo.universite-paris-saclay.fr/~paulin/notescours/liste_notescours.html}.

\bibitem{Rees81}
M.~Rees.
\newblock Checking ergodicity of some geodesic flows with infinite {Gibbs}
  measure.
\newblock {\em Ergodic Theory and Dynamical Systems}, 1(1):107--133, 1981.

\bibitem{Sis17}
A.~Sisto.
\newblock Tracking rates of random walks.
\newblock {\em Israel Journal of Mathematics}, 220(1):1--28, 2017.

\bibitem{Sul}
D.~Sullivan.
\newblock On the ergodic theory at infinity of an arbitrary discrete group of
  hyperbolic motions.
\newblock In {\em Riemann surfaces and related topics, Proceedings of the 1978
  Stony Brook Conference, State Univ. New York, Stony Brook}, pages 465--496.
  Princeton Univ. Press, 1981.

\bibitem{Tio15}
G.~Tiozzo.
\newblock Sublinear deviation between geodesics and sample paths.
\newblock {\em Duke Math. J.}, 164(3):511--539, 2015.

\bibitem{Tsu59}
M.~Tsuji.
\newblock {\em Potential theory in modern function theory}.
\newblock Maruzen, 1959.

\end{thebibliography}

\end{document}